\newcounter{pcounter}
\newcommand{\R}{\mathcal{R}}
\newcommand{\F}{\mathcal{F}}
\newcommand{\B}{\mathcal{B}}
\newcommand{\PP}{\Bbb P}
\newcommand{\RR}{\Bbb R} 
\newcommand{\NN}{\Bbb N}
\newcommand{\CC}{\Bbb C}
\newcommand{\Lip}{Lip}
\newcommand{\ip}[1]{\langle #1 \rangle}
\newcommand{\widetidle}{\widetilde}
\newcommand{\varespilon}{\varepsilon}
\newcommand{\parital}{\partial}
\newcommand{\actson}{\curvearrowright}
\newtheorem{ex}{Example}
\newtheorem{theorem}{Theorem}
\newtheorem{definition}[theorem]{Definition}
\newtheorem{proposition}[theorem]{Proposition}
\newtheorem{cor}[theorem]{Corollary}
\newtheorem{lemma}[theorem]{Lemma}
\newcommand{\FF}{\Bbb F}
\DeclareMathOperator{\conjugate}{conj}
\DeclareMathOperator{\multi}{multi}
\DeclareMathOperator{\Hamm}{Hamm}
\DeclareMathOperator{\vol}{vol}
\DeclareMathOperator{\vr}{vr}
\DeclareMathOperator{\Isom}{Isom}
\DeclareMathOperator{\Sym}{Sym}
\DeclareMathOperator{\Span}{Span}
\DeclareMathOperator{\im}{im}
\DeclareMathOperator{\id}{Id}
\DeclareMathOperator{\Hom}{Hom}
\DeclareMathOperator{\Tr}{Tr}
\DeclareMathOperator{\opdim}{opdim}
\numberwithin{theorem}{section}
\begin{document}
\title[$l^{p}$ Dimension for Banach Space Representations of Sofic Groups] {An $l^{p}$-Version of Von Neumann Dimension for Banach Space Representations of Sofic Groups }        
\author{Ben Hayes}
\address{UCLA Math Sciences Building\\
         Los Angeles,CA 90095-1555} 
\email{brh6@ucla.edu}       
\date{\today}          
\maketitle
\begin{abstract} In \cite{Gor}, A.Gournay defined  a notion of $l^{p}$-dimension for $\Gamma$-invariant subspaces of $l^{q}(\Gamma)^{\oplus n},$ with $\Gamma$ amenable. The number $\dim_{l^{q}}l^{p}(\Gamma,V)$ is $\dim V$ when $p=q,$ and is preserved by a certain class of $\Gamma$-equivariant bounded linear isomorphisms.  We develop a notion of $\dim_{l^{p},\Sigma}(Y,\Gamma)$ where $Y$ is a Banach space with a uniformly bounded action of a sofic group $\Gamma$ and $\Sigma$ is a sofic approximation. In particular, our definition makes sense for a large class of non-amenable groups. We also develop a notion of $\dim_{S^{p},\Sigma}(Y,\Gamma)$ with $\Gamma$ a $\R^{\omega}$-embeddable group and $S^{p}$ the space of  finite dimensional Schatten $p$-class operators. These numbers are invariant under bounded $\Gamma$-equivariant linear isomorphisms and under the natural translation action of $\Gamma,$  $\dim_{l^{p}}(l^{p}(\Gamma,V),\Gamma)=\dim V,$ and $ \dim_{S^{p}}(l^{p}(\Gamma,V),\Gamma)=\dim V$ for $1\leq p\leq 2.$  In particular, this shows that $l^{p}(\Gamma,V)$ is not isomorphic to $l^{p}(\Gamma,W)$ as a representation of $\Gamma$ if $\dim V\ne \dim W,$ and $\Gamma$ is $\R^{\omega}$-embeddable. We discuss other concrete computations in a follow-up paper, including proving that our dimension agrees with von Neumann dimension for representations contained in a multiple of the left-regular representation.
\end{abstract}

\tableofcontents

\section{Introduction}
Let $\Gamma$ be a countable discrete group. Suppose that $H$ is a closed $\Gamma$-invariant subspace of $l^{2}(\Gamma\times \NN),$ and let $P_{H}$ be the projection onto $H,$ then it is known that the number
\[\dim_{L(\Gamma)}(H)=\sum_{n\in \NN}\ip{P_{H}\delta_{(e,n)},\delta_{(e,n)}}\]
obeys the usual properties of dimension,
\begin{list}{Property \arabic{pcounter}:~}{\usecounter{pcounter}}
\item$\dim_{L(\Gamma)}(H)=\dim_{L(\Gamma)}(K),$ if there is a $\Gamma$-equivariant bounded linear bijection from $H$ to $K$,\\
\item $\dim_{L(\Gamma)}(H\oplus K)=\dim_{L(\Gamma)}(H)+\dim_{L(\Gamma)}(K).$,\\
\item $\dim_{L(\Gamma)}(H)=0$ if and only if $H=0,$\\
\item $\dim_{L(\Gamma)}\left(\bigcap_{n=1}^{\infty}H_{n}\right)=\lim_{n\to \infty}\dim_{L(\Gamma)}(H_{n}),$ \mbox{ if $\dim_{L(\Gamma)}(H_{1})<\infty, $} and also $H_{n+1}\subseteq H_{n}$,\\
\item $\dim_{L(\Gamma)}\overline{\left(\bigcup_{n=1}^{\infty}H_{n}\right)}=\lim_{n\to \infty}\dim_{L(\Gamma)}(H_{n})$\mbox{ if $H_{n}\subseteq H_{n+1},$}
\end{list}
We also have
\[\dim_{L(\Gamma)}(l^{2}(\Gamma)^{\oplus n}))=n,\]

	Voiculescu in \cite{Voi} and Gournay in \cite{Gor} noticed that for \emph{amenable} groups $\Gamma,$ we can define this dimension  as a limit of normalized approximate dimensions of $F_{n}\Omega,$ with $F_{n}$ a F\o lner sequence, and $\Omega\subseteq H.$ This formula is analogous to the definition of entropy for actions of an amenable group on a compact metrizable space or measure space.  Gournay  noted that a formula for von Neumann dimension similar to Voiculescu's makes senses for subspaces of $l^{p}(\Gamma,V),$ with $\Gamma$ amenable. Using this, he defined an isomorphism invariant for subspaces of $l^{p}(\Gamma,V)$ agreeing with von Neumann dimension in the case $p=2$. In particular, Gournay shows that if $\Gamma$ is amenable, and there is an injective $\Gamma$-equivariant linear map of finite type (see \cite{Gor} for the definition) with closed image from $l^{p}(\Gamma,V)\to l^{p}(\Gamma,W)$ then $\dim V\leq \dim W.$ 

	Recently, in \cite{Bow},\cite{KLi} a theory of entropy for actions of a \emph{sofic} group on a probability space or a compact metrizable space has been developed. Using this theory, it was shown for sofic groups $\Gamma$ that probability measure preserving Bernoulli actions $\Gamma\curvearrowright (X,\mu)^{\Gamma},\Gamma \actson (Y,\nu)$ are not isomorphic if the entropy of $(X,\mu)$ does not equal the entropy of $(Y,\nu),$ if $\Gamma,$ and that Bernoulli actions $\Gamma\actson X^{\Gamma},\Gamma\actson Y^{\Gamma}$ are not isomorphic as actions on compact metrizable spaces if $|X|\ne |Y|$ (here $X$ and $Y$ are finite). We can think of the action of $\Gamma$ on $l^{p}(\Gamma,V)$ as analogous to a Bernoulli action, since both actions are given by translating functions on the group. Combining  ideas of Kerr and Li \cite{KLi} and Voiculescu in \cite{Voi}, we define an isomorphism invariant
\[\dim_{\Sigma,l^{p}}(Y,\Gamma)\]
for a uniformly bounded action of a sofic group on a separable Banach space $Y$ (all our Banach spaces will be complex, unless explicitly mentioned otherwise). 

	This definition of dimension has the following properties:
\begin{list}{Property \arabic{pcounter}:~}{\usecounter{pcounter}}
\item $\dim_{\Sigma,l^{p}}(Y,\Gamma)\leq \dim_{\Sigma,l^{p}}(X,\Gamma)$ if there is an equivariant bounded linear map $X\to Y$ with dense image,
\item $\dim_{\Sigma,l^{p}}(V,\Gamma)\leq \dim_{\Sigma,l^{p}}(W,\Gamma)+\dim_{\Sigma,l^{p}}(V/W,\Gamma),$ if $W\subseteq V$ is a closed $\Gamma$-invariant subspace,
\item $\dim_{\Sigma,l^{p}}(Y\oplus W,\Gamma)\geq \dim_{\Sigma,l^{p}}(Y,\Gamma)+\underline{\dim}_{\Sigma,l^{p}}(W,\Gamma)$ for $2\leq p<\infty,$ where $\underline{\dim}$ is a ``lower dimension," and is also an invariant, 
\item $\dim_{\Sigma,l^{p}}(l^{p}(\Gamma,V),\Gamma)=\underline{\dim}_{\Sigma,l^{p}}(l^{p}(\Gamma,V),\Gamma)=\dim(V)$ for $1\leq p\leq 2,$
\item $\dim_{\Sigma,l^{p}}(X,\Gamma)\geq \dim_{L(\Gamma)}(\overline{X}^{\|\cdot\|_{2}}),$ when $X\subseteq l^{p}(\NN,l^{p}(\Gamma))$ and $1\leq p\leq 2.$

\end{list}

We also note that for defining $\dim_{l^{p}}(Y,\Gamma),$ little about soficity of $\Gamma$ is used, and we can more generally define our invariants associated to a sequence of maps $\sigma_{i}\colon \Gamma\to \Isom(V_{i})$ where  $V_{i}$ are finite-dimensional Banach spaces.

	In particular, we can show that $\dim_{\Sigma,l^{2}}(Y,\Gamma)$ can be defined for $\R^{\omega}$-embeddable groups $\Gamma.$ Because unitaries also act isometrically on the space of Schatten $p$-class operators, we can also define an invariant
\[\dim_{\Sigma,S^{p}}(Y,\Gamma),\]
$S^{p}$ dimension has properties analogous to $l^{p}$ dimension.
\begin{list}{Property \arabic{pcounter}: ~ }{\usecounter{pcounter}}
\item $\dim_{\Sigma,S^{p}}(Y,\Gamma)\leq \dim_{\Sigma,S^{p}}(X,\Gamma)$ if there is a $\Gamma$-equivariant bounded linear bijection $X\to Y,$
\item $\dim_{\Sigma,S^{p}}(V,\Gamma)\leq \dim_{\Sigma,S^{p}}(W,\Gamma)+\dim_{\Sigma,S^{p}}(V/W,\Gamma),$ if $W\subseteq V$ is a closed $\Gamma$-invariant subspace,
\item $\dim_{\Sigma,S^{p}}(Y\oplus W,\Gamma)\geq \dim_{\Sigma,S^{p}}(Y,\Gamma)+\underline{\dim}_{\Sigma,S^{p}}(W,\Gamma)$ for $2\leq p<\infty,$ 
\item $\underline{\dim}_{\Sigma,S^{p}}(l^{p}(\Gamma,V),\Gamma)=\dim (V)$ for $1\leq p\leq 2,$\\
\item $\underline{\dim}_{\Sigma,S^{p}}(W,\Gamma)\geq \dim_{L(\Gamma)}(\overline{W}^{\|\cdot\|_{2}})$ if $W\subseteq l^{p}(\NN,l^{p}(\Gamma))$ is a nonzero closed invariant subspace and $1\leq p\leq 2,$
\item $\underline{\dim}_{\Sigma,l^{2}}(H,\Gamma)=\dim_{\Sigma,l^{2}}(H,\Gamma)=\dim_{L(\Gamma)}H$ if $H\subseteq l^{2}(\NN,l^{2}(\Gamma))$ is $\Gamma$ invariant.

\end{list}
In particular $l^{p}(\Gamma,V)$ is not isomorphic to $l^{p}(\Gamma,W)$ as a representation of $\Gamma,$ if $\Gamma$ is $\R^{\omega}$-embeddable and  $1\leq p<\infty.$ This extends a result of \cite{Gor} from  amenable groups to $\R^{\omega}$-embeddable groups, and answers a question of Gromov (see \cite{Gro} page 353) in the case of $\R^{\omega}$-embeddable groups.

	In a follow-up paper, we will do some more concrete computations:  defining $l^{p}$-Betti numbers for cocompact actions on CW complexes, and compute $l^{p}$-Betti numbers of free groups, as well computations for the natural action of $\Gamma$ on $L^{p}(L(\Gamma),\tau).$ We also prove that $\dim_{\Sigma,l^{p}}(X,\Gamma)=0$ if $X$ is finite-dimensional and $\Gamma$ is infinite.

\section{Definition of the Invariants}

	 We recall the definition of sofic and $\R^{\omega}$-embeddable groups (see \cite{Pest},\cite{Bow}) . To fix notation we use $\Sym(A)$ for the group of bijections of the set $A,$ and we let $S_{n}=\Sym(\{1,\cdots,n\}),$ finally we let $U(n)$ denote the unitary group of $\CC^{n},$ where $\CC^{n}$ has the usual inner product.  It is useful to introduce metrics on the symmetric and unitary groups. For $\sigma,\tau\in S_{n},$ we define the \emph{Hamming distance} by
\[d_{\Hamm}(\sigma,\tau)=\frac{1}{n}|\{j:\sigma(j)\ne \tau(j)\}|.\]
If $A,B\in M_{n}(\CC)$ we let
\[\ip{A,B}=\frac{1}{n}\Tr(B^{*}A),\]
note that $\ip{A,B}$ is indeed an inner product. We let $\|\cdot\|_{2}$ denote the Hilbert space norm induced by this inner product. 

\begin{definition}\label{D:sofic} \emph{ Let $\Gamma$ be a countable group. A} sofic approximation for $\Gamma$ \emph{ is a sequence of maps $\sigma_{i}\colon \Gamma\to S_{d_{i}}$ with $d_{i}\to \infty,$ (not assumed to be homomorphisms) which is approximately multiplicatively and approximately free in the sense that}
\[d_{\Hamm}(\sigma_{i}(st),\sigma_{i}(s)\sigma_{i}(t))\to 0, \mbox{\emph{ for all $s,t\in \Gamma$}}\]
\[d_{\Hamm}(\sigma_{i}(s),\sigma_{i}(s'))\to 1\mbox{\emph{ for all $s\ne s'\in \Gamma$} }.\]
\end{definition}
We say that $\Gamma$ is \emph{sofic} if it has a sofic approximation.

One can think of a sofic approximation $\sigma_{i}$ as above as maps so that if 
\[x_{1},\ldots,x_{n},y_{1},\ldots,y_{m}\in \Gamma,\]
 and $a_{1},\ldots,a_{n},b_{1},\ldots,b_{m}\in \{-1,1\},$ then with high probability,
\[\sigma_{i}(x_{1})^{a_{1}}\cdots \sigma_{i}(x_{n})^{a_{n}}(j)=\sigma_{i}(y_{1})^{a_{1}}\cdots \sigma_{i}(y_{n})^{a_{n}}(j)\mbox{ if $x_{1}^{a_{1}}\cdots x_{n}^{a_{n}}=y_{1}^{a_{1}}\cdots y_{n}^{a_{n}}$,}\]
\[\sigma_{i}(x_{1})^{a_{1}}\cdots \sigma_{i}(x_{n})^{a_{n}}(j)\ne\sigma_{i}(y_{1})^{a_{1}}\cdots \sigma_{i}(y_{n})^{a_{n}}(j) \mbox{ if $x_{1}^{a_{1}}\cdots x_{n}^{a_{n}}\ne y_{1}^{a_{1}}\cdots y_{n}^{a_{n}}$.}\]

The requirement $d_{i}\to \infty$ is not necessary since one can replace $\sigma_{i}$ with $\sigma_{i}^{\otimes k_{i}}$ where $\sigma_{i}^{\otimes k_{i}}\colon \Gamma\to \Sym(\{1,\ldots,d_{i}\}^{k_{i}})$ is given by
\[\sigma_{i}^{\otimes k_{i}}(s)(a_{1},\ldots,a_{k_{i}})=(\sigma_{i}(s)(a_{1}),\ldots,\sigma_{i}(s)(a_{k_{i}})).\]
We require that $d_{i}\to \infty$ simply for our properties of $l^{p}$-dimension to behave appropriately. Note that $d_{i}\to \infty$ is automatic when the group is infinite by our approximate freeness assumption.

A related notion is that of being $\R^{\omega}$-embeddable.

\begin{definition}\label{D:hyperlinear} \emph{ Let $\Gamma$ be a countable group. An} embedding sequence for $\Gamma$ \emph{ is a sequence of maps $\sigma_{i}\colon \Gamma\to U(d_{i}),$ with $d_{i}\to \infty,$ (not assumed to be homomorphisms) such that}
\[\|\sigma_{i}(st)-\sigma_{i}(s)\sigma_{i}(t)\|_{2}\to 0\mbox{ for all $s,t\in \Gamma$}\]
\[\frac{1}{d_{i}}\Tr(\sigma_{i}(s')^{*}\sigma_{i}(s))\to 0 \mbox{ for all $s\ne s'$ in $\Gamma.$}\]
\emph{ A group is said to be $\R^{\omega}$-embeddable if it has a embedding sequence.}
\end{definition}

	The second condition says that if $s\ne s'$, then asymptotically $\sigma_{i}(s),\sigma_{i}(s')$ become orthogonal  under the inner product which induces $\|\cdot\|_{2}.$ One can formulate a probabilistic interpretation of an embedding sequence analogous to that of a sofic approximation: for any $\varepsilon>0,$ if $x_{1},\ldots,x_{n},y_{1},\ldots,y_{m}\in \Gamma,$ and $a_{1},\ldots,a_{n},b_{1},\ldots,b_{m}\in \{-1,1\},$ then if $x_{1}^{a_{1}}\cdots x_{n}^{a_{n}}=y_{1}^{a_{1}}\cdots y_{n}^{a_{n}}$,
\[\PP(\{\xi\in S^{2d_{i}-1}:\|\sigma_{i}(x_{1})^{a_{1}}\cdots \sigma_{i}(x_{n})^{a_{n}}(\xi)-\sigma_{i}(y_{1})^{a_{1}}\cdots \sigma_{i}(y_{n})^{a_{n}}(\xi)\|<\varepsilon\})\to 1,\]
 and if if $x_{1}^{a_{1}}\cdots x_{n}^{a_{n}}\ne y_{1}^{a_{1}}\cdots y_{n}^{a_{n}},$ 
\[\PP(\{\xi\in S^{2d_{i}-1}:|\ip{\sigma_{i}(x_{1})^{a_{1}}\cdots \sigma_{i}(x_{n})^{a_{n}}(\xi),\sigma_{i}(y_{1})^{a_{1}}\cdots \sigma_{i}(y_{n})^{a_{n}}(\xi)}|<\varepsilon\})\to 1.\]
 This equivalence follows by concentration of measure.

Note that if $\sigma\in S_{n}$ and $U_{\sigma}$ is the unitary on $\CC^{n}$ which $\sigma$ induces, we have that
\[d_{\Hamm}(\sigma,\tau)=d_{\Hamm}(\tau^{-1}\sigma,\id)=1-\frac{1}{n}\Tr(U_{\tau^{-1}\sigma})=1-\frac{1}{n}\Tr(U_{\tau}^{*}U_{\sigma}),\]
\[\|U_{\sigma}-U_{\tau}\|_{2}^{2}=2-2(1-d_{\Hamm}(\tau^{-1}\sigma,\id))=2d_{\Hamm}(\sigma,\tau)\]
thus all sofic groups are $\R^{\omega}$-embeddable. 

	We will sometimes use an alternate definition of $\R^{\omega}$-embeddable: a group is $\R^{\omega}$-embeddable if its group von Neumann algebra embeds into an ultraproduct of matrix algebras. We will prove this in section \ref{S:lp}. For a good introduction to sofic and $\R^{\omega}$-embeddable groups we  refer to \cite{Pest}.

	We now give examples of sofic groups, and thus $\R^{\omega}$-embeddable groups, see (\cite{Pest} for proofs, although most of these can be shown directly).

\begin{ex}\label{E:am}\emph{ All countable amenable groups are sofic. To prove this, let $F_{n}$ is a F\o lner sequence for $\Gamma.$ For $g\in \Gamma,$ let $\tau_{i}(g)\colon F_{i}\setminus g^{-1}F_{i}\to F_{i}\setminus gF_{i}$ be an arbitrary bijection. Define  $\sigma_{i}\colon \Gamma \to \Sym(F_{i})$  by }\end{ex}
\[\sigma_{i}(s)(x)=\begin{cases}
  sx & \textnormal{ if $x\in F_{i}\cap s^{-1}F_{i}$}\\
  \tau_{i}(s)(x)  & \textnormal{ otherwise}
\end{cases}.\]
It follows directly form the definition of a F\o lner sequence that $\sigma_{i}$ is a sofic approximation.
\begin{ex}\label{E:fin} \emph{ All countable residually sofic groups are sofic. In particular, this includes all free groups and residually amenable groups.}\end{ex}
\begin{ex}\label{E:loc} \emph{ Countable locally sofic groups are sofic.}\end{ex}
\begin{ex}\emph{ By Malcev's Theorem (see \cite{BO} Theorem 6.4.3) all finitely generated linear groups are residually finite, hence sofic. By the preceding example all countable linear groups are sofic.} \end{ex}

	It is shown in \cite{ESZ1} that sofic groups are closed under direct products, taking subgroups, inverse limits, direct limits, free products, and extensions by amenable groups: if $\Lambda\triangleleft \Gamma,$ $\Lambda$ is sofic, and $\Gamma/\Lambda$ is amenable, then $\Gamma$ is sofic. It is also known that $\R^{\omega}$-embeddable groups are closed under these operations as well. It is unknown whether all countable groups are sofic. As mentioned earlier, a group is $\R^{\omega}$-embeddable if and only if its group von Neumann algebra embeds into an ultrapower of the hyperfinite $\rm{II}_{1}$ factor. It follows that if the Connes Embedding Conjecture is true, then all countable discrete groups are $\R^{\omega}$-embeddable. Even without the Connes Embedding conjecture we still have many examples of $\R^{\omega}$-embeddable groups.

\begin{definition} \emph{Let $X$ be a Banach space. An action $\Gamma$ on $X$ by is said to be} uniformly bounded \emph{if there is a constant $C>0$ such that}
\[\|sx\|\leq C\|x\|\mbox{ \emph{for all $x\in X,s\in \Gamma$}}.\]
\emph{We say that a sequence $S=(x_{j})_{j=1}^{\infty}$ in $X$ is} dynamically generating,  \emph{ if $S$ is bounded and $\Span\{sx_{j}:s\in \Gamma,j\in \NN\}$ is dense. }
\end{definition}

If $X$ is a Banach space we shall write $\Isom(X)$ for the group of all linear isometries from $X$ to itself.

\begin{definition} \emph{Let $V$ be a vector space with a pseudonorm $\rho.$ If $A\subseteq V,$ a linear subspace $W\subseteq V$ is said to} $\varepsilon$-contain $A,$ \emph{denoted $A\subseteq_{\varepsilon}W,$  if for every $v\in A,$ there is a $w\in W$ such that $\rho(v-w)<\varepsilon.$ We let $d_{\varepsilon}(A,\rho)$ be the minimal dimension of a subspace which $\varepsilon$-contains} $A.$\end{definition}

\begin{definition}\emph{A} dimension triple \emph{is a triple $(X,\Gamma,\Sigma=(\sigma_{i}\colon \Gamma\to \Isom(V_{i}))),$ where $X$ is a separable Banach space, $\Gamma$ is a countable discrete group with a uniformly bounded action on $X,$  each $V_{i}$ is finite-dimensional, and the $\sigma_{i}$ are functions with no structure assumed on them.}\end{definition}
\begin{definition} \emph{Let $(X,\Gamma,\Sigma=(\sigma_{i}\colon \Gamma\to \Isom(V_{i})))$ be a dimension triple.  Fix $S=(x_{j})_{j=1}^{\infty}$ a dynamically generating sequence in $X.$} 

	\emph{For $e\in E\subseteq \Gamma$ finite, $l\in \NN$ let}
\[X_{E,l}=\Span\{sx_{j}:s\in E^{l},1\leq j\leq l\}.\]

	\emph{If $e\in F\subseteq \Gamma$ finite, $m\in \NN,$ $C,\delta>0,$ let $\Hom_{\Gamma}(S,F,m,\delta,\sigma_{i})_{C}$ be the set of all linear maps $T\colon X_{F,m}\to V_{i}$  such that $\|T\|\leq C$ and
\[\|T(s_{1}\cdots s_{k}x_{j})-\sigma_{i}(s_{1})\cdots \sigma_{i}(s_{k})T(x_{j})\|<\delta\]
if $1\leq j,k\leq m,s_{1},\cdots,s_{k}\in F.$ If $C=1$ we shall use $\Hom_{\Gamma}(S,F,m,\delta,\sigma_{i})$ instead of $\Hom_{\Gamma}(S,F,m,\delta,\sigma_{i})_{1}.$}\end{definition}

	We shall frequently deal with inducing pseudonorms on $l^{\infty}(\NN,V)$ from pseudornoms on $l^{\infty}(\NN).$ For this, we use the following notation: if $\rho$ is a pseudonorm on $l^{\infty}(\NN)$ and $V$ is a Banach space, we let $\rho_{V}$ be the pseudonorm on $l^{\infty}(\NN,V)$ defined by $\rho_{V}(f)=\rho(j\mapsto \|f(j)\|).$
\begin{definition} \emph{Let $\Sigma,S$ be as in the proceeding definition and let $\rho$ be a pseudonorm on $l^{\infty}(\NN)$.  Let $\alpha_{S}\colon B(X_{F,m},V_{i})\to l^{\infty}(\NN,V_{i})$ be given by $\alpha_{S}(T)(j)=\chi_{\{k\leq m\}}(j)T(x_{j}).$ We let} 
\[\widehat{d}_{\varepsilon}(\Hom_{\Gamma}(S,F,m,\delta,\sigma_{i}),\rho)=d_{\varepsilon}(\alpha_{S}(\Hom_{\Gamma}(S,F,m,\delta,\sigma_{i})),\rho_{V_{i}})\]
\emph{define the} dimension of $S$ with respect to $\rho$ \emph{by}
\begin{align*}
f.\dim_{\Sigma}(S,F,m,\delta,\varepsilon,\rho)&=\limsup_{i\to \infty}\frac{1}{\dim V_{i}}\widehat{d}_{\varepsilon}(\Hom_{\Gamma}(S,F,m,\delta,\sigma_{i}),\rho),\\
f.\dim_{\Sigma}(S,\varepsilon,\rho)&=\limsup_{\substack{e\in F\subseteq \Gamma \mbox{\emph{ finite }}\\m\in \NN\\ \delta>0}}f.\dim_{\Sigma}(S,F,m,\delta,\varepsilon,\rho)\\
f.\dim_{\Sigma}(S,\rho)&=\sup_{\varepsilon>0}f.\dim_{\Sigma}(S,\varepsilon,\rho),
\end{align*}
\emph{where the pairs $(F,m,\delta)$ are ordered as follows $(F,m,\delta)\leq (F',m',\delta')$ if $F\subseteq F',m\leq m',\delta\geq \delta'.$} 

	\emph{We also use}
\begin{align*}
\underline{f.\dim}_{\Sigma}(S,F,m,\delta,\varepsilon,\rho)&=\liminf_{i\to \infty}\frac{1}{\dim V_{i}}\widehat{d}_{\varepsilon}(\Hom_{\Gamma}(S,F,m,\delta,\sigma_{i}),\rho),\\
\underline{f.\dim}_{\Sigma}(S,\varepsilon,\rho)&=\liminf_{\substack{e\in F\subseteq \Gamma \mbox{\emph{ finite }}\\ m\in \NN\\ \delta>0}}f.\dim_{\Sigma}(S,F,m,\delta,\varepsilon,\rho)\\
\underline{f.\dim}_{\Sigma}(S,\rho)&=\sup_{\varepsilon>0}f.\dim_{\Sigma}(S,\varepsilon,\rho).
\end{align*}

\end{definition}

In section $\ref{S:invariance}$ we will show that
\[f.\dim_{\Sigma}(S,\rho)=\sup_{\varepsilon>0}\liminf_{(F,m,\delta)}\limsup_{i\to \infty}\frac{1}{\dim V_{i}}\widehat{d}_{\varepsilon}(\Hom_{\Gamma}(S,F,m,\delta,\sigma_{i}),\rho),\]
\[\underline{f.\dim}_{\Sigma}(S,\rho)=\sup_{\varepsilon>0}\limsup_{(F,m,\delta)}\liminf_{i\to \infty}\frac{1}{\dim V_{i}}\widehat{d}_{\varepsilon}(\Hom_{\Gamma}(S,F,m,\delta,\sigma_{i}),\rho).\]

We introduce two other versions of dimension, which will be used to prove that the above notion of dimension does not depend on the generating sequence.

\begin{definition} \emph{Let $X$ be a separable Banach space, we say that $X$ has the} $C$-bounded approximation property \emph{if there is a sequence $\theta_{n}\colon X\to X$ of finite rank maps such that $\|\theta_{n}\|\leq C$ and}
\[\|\theta_{n}(x)-x\|\to 0,\mbox{\emph{ for all $x\in X.$}}\]
\emph{ We say that $X$ has the} bounded approximation property \emph{if it has the $C$-bounded approximation property for some $C>0.$}
\end{definition}

\begin{definition} \emph{Let $X$ be a separable Banach space with a uniformly bounded action of a countable discrete group $\Gamma.$ Let $q\colon Y\to X$ be a bounded linear surjective map, where $Y$ is a separable Banach space with the bounded approximation property. A} $q$-dynamical filtration \emph{is a pair $\F=((a_{sj})_{(s,j)\in \Gamma\times \NN},(Y_{E,l})_{e\in E\subseteq\Gamma \mbox{ finite},l\in \NN})$ where $a_{sj}\in Y,$ $Y_{E,l}\subseteq Y$ is a finite dimensional linear subspace such that}\end{definition}
\begin{list}{ \arabic{pcounter}:~}{\usecounter{pcounter}}
\item $\sup_{(s,j)}\|a_{sj}\|<\infty,$\\
\item $q(a_{sj})=sq(a_{ej}),$\\
\item $(q(a_{ej}))_{j=1}^{\infty}$ \emph{is dynamically generating},\\
\item $Y_{E,l}\subseteq Y_{E',l'}$ \emph{if} $E\subseteq E',l\leq l'$\\
\item $\ker(q)=\overline{\bigcup_{(E,l)}Y_{E,l}\cap \ker(q)},$\\
\item $Y_{E,l}=\Span\{a_{sj}:s\in E^{l},1\leq j\leq l\}+\ker(q)\cap Y_{E,l}.$
\end{list}

	Note that if $X$ has the bounded approximation property and $Y=X$ with $q$ the identity, then a dynamical filtration simply corresponds to a choice of a dynamically generating sequence. In general, if $S=(x_{j})_{j=1}^{\infty}$ is a dynamically generating sequence, then there is always a $q$-dynamical filtration $\F=((a_{sj})_{(s,j)\in \Gamma\times \NN},Y_{F,l})$ such that $q(a_{ej})=x_{j}.$ Simply choose $a_{sj}$ such that $\|a_{sj}\|\leq C\|x_{j}\|$ and $q(a_{sj})=sx_{j}$ for some $C>0.$ If $(y_{j})_{j=1}^{\infty}$ is a dense sequence in $\ker(q),$ we can set
\[Y_{E,l}=\Span\{a_{sj}:(s,j)\in E^{l}\times \{1,\cdots,l\}\}+\sum_{j=1}^{l}\CC y_{j}.\]
	
	We can always find a Banach space $Y$ with the bounded approximation property and a quotient map $q\colon Y\to X,$ in fact it is a standard exercise that we can choose $Y=l^{1}(\NN).$

\begin{definition} \emph{A} quotient dimension tuple \emph{ is a tuple $(Y,q,X,\Gamma,\sigma_{i}\colon \Gamma\to \Isom(V_{i}))$ where $(X,\Gamma,\sigma_{i})$ is a dimension triple, $Y$ is a separable Banach space with the bounded approximation property and $q\colon Y\to X$ is a bounded linear surjection. }
\end{definition}

\begin{definition}\emph{ Let $(Y,q,X,\Gamma,\sigma_{i}\colon \Gamma\to \Isom(V_{i}))$ be a quotient dimension triple, and  let $\F=((a_{sj})_{(s,j)\in \Gamma\times \NN},Y_{F,l})$ be a $q$-dynamical filtration. For $e\in F\subseteq \Gamma$ finite, $m\in \NN,\delta,C>0$ we let $\Hom_{\Gamma}(\F,F,m,\delta,\sigma_{i})_{C}$ be the set of all bounded linear maps $T\colon Y\to V_{i}$ such that $\|T\|\leq C$ and}
\[\|T(a_{s_{1}\cdots s_{k}j})-\sigma_{i}(s_{1})\cdots \sigma_{i}(s_{k})T(a_{ej})\|<\delta\]
\[\left\|T\big|_{\ker(q)\cap Y_{F,l}}\right\|<\delta.\]
\emph{As before, if $C=1$ we will  use $\Hom_{\Gamma}(\F,F,m,\delta,\sigma_{i})$ instead of $\Hom_{\Gamma}(\F,F,m,\delta,\sigma_{i})_{C}.$}
\end{definition}

	Again, in the case $X$ has the bounded approximation property, we are simply looking at almost equivariant maps from $\Gamma$ to $V_{i},$ and this is similar in spirit to the definition of topological entropy in \cite{KLi}. In the general case, note that genuine equivariant maps from $X$ to $V_{i}$ would correspond to maps on $Y$ which vanish on the kernel of $q,$ and so that 
\[T(a_{s_{1}\cdots s_{k}j})=\sigma_{i}(s_{1})\cdots \sigma_{i}(s_{k})T(a_{ej}),\]
 so we are still looking at almost equivariant maps on $X,$ in a certain sense.

\begin{definition}
\emph{Fix a pseudonorm $\rho$ on $l^{\infty}(\NN),$ let $(Y,q,X,\Gamma,\Sigma=(\sigma_{i}\colon \Gamma\to \Isom(V_{i})))$ be a quotient dimension tuple, and $\F$ a $q$-dynamical filtration. Let $\alpha_{\F}\colon B(Y,V_{i})\to l^{\infty}(\NN,V_{i})$ be given by $\alpha_{\F}(\phi)=(\phi(a_{ej}))_{j=1}^{\infty}$ we again use $\widehat{d}_{\varepsilon}(A,\rho)=d_{\varepsilon}(\alpha_{\F}(A),\rho_{V_{i}}).$   We define the dimension of $\F$ with respect to $\rho,\Sigma$ as follows:}\end{definition}
\begin{align*}
f.\dim_{\Sigma}(\F,F,m,\delta,\varepsilon,\rho)&=\limsup_{i\to \infty}\frac{1}{\dim V_{i}}\widehat{d}_{\varepsilon}(\Hom_{\Gamma}(\F,F,m,\delta,\sigma_{i}),\rho),\\
f.\dim_{\Sigma}(\F,\varepsilon,\rho)&=\inf_{\substack{e\in F\subseteq \Gamma \mbox{ finite } \\ m\in \NN \\ \delta>0}}f.\dim_{\Sigma}(\F,F,m,\delta,\varepsilon,\rho),\\
f.\dim_{\Sigma}(\F,\rho)&=\sup_{\varepsilon>0}f.\dim_{\Sigma}(\F,\varepsilon,\rho).
\end{align*}

Note that unlike $f.\dim_{\Sigma}(S,F,m,\delta,\varepsilon,\rho)$ we know that $f.\dim_{\Sigma}(\F,F,m,\delta,\varepsilon,\rho)$ is smaller when we enlarge $F$ and $m$ and shrink $\delta,$ thus the infimum is a limit and there are no issues between equality of limit supremums and limit infimums for this definition.

\begin{definition} \emph{Let $Y,X$ be Banach spaces, and let $\rho$ be a pseduonorm on $B(X,Y).$ For $\varespilon>0,0<M\leq \infty,$ and $A,C\subseteq B(X,Y),$ the set $C$ is said to} $(\varepsilon,M)$ contain $A$ \emph{if for every $T\in A,$ there is a $S\in C$ such that $\|S\|\leq M$ and $\rho(S-T)<\varepsilon.$ In this case we shall write $A\subseteq_{\varepsilon,M}C.$ We let $d_{\varepsilon,M}(A,\rho)$ be the smallest dimension of a linear subspace which $(\varepsilon,M)$ contains $A.$}\end{definition}

\begin{definition}\emph{ Let $(Y,q,X,\Gamma,\sigma_{i}\colon \Gamma\to \Isom(V_{i}))$ be a quotient dimension tuple.  Let $\F=(a_{sj},Y_{F,l})$ be a $q$-dynamical filtration. Fix a sequence of pseudonorms of $\rho_{i}$ on $B(Y,V_{i})$ and $0<M\leq \infty,$ set }\end{definition}
\begin{align*}
\opdim_{\Sigma,M}(\F,F,m,\delta,\varepsilon,\rho_{i})&=\limsup_{i\to \infty}\frac{1}{\dim V_{i}}d_{\varepsilon,M}(\Hom_{\Gamma}(\F,F,m,\delta,\sigma_{i}),\rho_{i}),\\
\opdim_{\Sigma,M}(\F,\varepsilon,\rho_{i})&=\inf_{\substack{e\in F\subseteq \Gamma \mbox{ finite }\\ m\in \NN\\ \delta>0}}\opdim_{\Sigma,M}(\F,F,m,\delta,\varepsilon,\rho),\\
\opdim_{\Sigma,M}(\F,\rho_{i})&=\sup_{\varepsilon>0}\opdim_{\Sigma,M}(\F,\varepsilon,\rho).
\end{align*}

As before, we shall use
\[\underline{\opdim}_{\Sigma,M}(\F,\rho_{i}),\underline{f.\dim}_{\Sigma}(\F,\rho)\]
for the same definitions as above, but replacing the limit supremum with the limit infimum.

	By scaling, 
\[\inf_{0<M<\infty}\opdim_{\Sigma,M}(\F,\rho_{i}),\opdim_{\Sigma,\infty}(\F,\rho_{i}),f.\dim_{\Sigma}(S,\rho),f.\dim_{\Sigma}(\F,\rho)\]
 remain the same when we replace $\Hom_{\Gamma}(\F,F,m,\delta,\sigma_{i}),$ ~$\Hom_{\Gamma}(S,F,m,\delta,\sigma_{i}),$~ by $\Hom_{\Gamma}(\F,F,m,\delta,\sigma_{i})_{C},$ $\Hom_{\Gamma}(S,F,m,\delta,\sigma_{i})_{C},$ for $C$ a fixed constant. This will be useful in several proofs.

	Note that if $\rho$ is a pseudonorm on $l^{\infty}(\NN),$ then we get a pseudonorm $\rho_{\F,i}$ on $B(Y,V_{i})$ by
\[\rho_{\F,i}(T)=\rho(j\mapsto\|T(a_{ej})\|).\]
 Further, for $0<M\leq \infty$
\[\opdim_{\Sigma,M}(\F,\rho_{\F,i})\geq f.\dim_{\Sigma}(\F,\rho).\]

\begin{definition} \emph{A}  product norm \emph{$\rho$ is a norm on $l^{\infty}(\NN)$ such that}\end{definition}
\begin{list}{\arabic{pcounter} : ~ }{\usecounter{pcounter}}
\item $\rho$  induces a topology stronger than the product topology,\\
\item $\rho$ induces a topology which agrees with the product topology on $\{f\in l^{\infty}(\NN):\|f\|_{\infty}\leq 1\}.$
\end{list}

	Typical examples are the $l^{p}$-norms:
\[\rho(f)^{p}=\sum_{j=1}^{\infty}\frac{1}{2^{j}}|f(j)|^{p}.\]

We shall show that there is constant $M>0,$ depending only on $Y,$ so that if  $\F,\F'$ are dynamical filtrations of $q$ and $S$ is a dynamically generating sequence, then for any two  product norms $\rho,\rho'$,
\[\opdim_{\Sigma,M}(\F,\rho'_{\F,i})=\opdim_{\Sigma,M}(\F,\rho_{\F,i})=f.\dim_{\Sigma}(\F,\rho)=\]
\[f.\dim_{\Sigma}(\F',\rho)=\dim_{\Sigma}(S,\rho).\]
and the same with $\dim$ replaced by $\underline{\dim}.$ In particular all these dimension only depend of the action of $\Gamma$ on $X,$ and give an isomorphism invariant. When we show all these equalities we let 
\[\dim_{\Sigma}(X,\Gamma)\]
denote any of these common numbers. 

	The equality between these dimensions is easier to understand in the case when $X$ has the bounded approximation property. When $X$ has the bounded approximation property, we can take $Y=X,q=\id$ and then the equality 
\[\opdim_{\Sigma,M}(\F,\rho_{\F,i})=f.\dim_{\Sigma}(S,\rho),\]
says the data of \emph{local} almost equivariant maps on $X$ is the same as the data of \emph{global} almost equivariant maps on $X.$ This is essentially because if we take $\theta_{E,l}\colon X\to X_{E,l}$ which tend pointwise to the identity, then any almost equivariant map on $X_{E,l}$ gives an almost equivariant map on $X$ by composing with $\theta_{E,l}.$ 

	Since the maps $\sigma_{i}\colon \Gamma\to \Isom(V_{i})$ are not assumed to have any structure, this invariant is uninteresting unless the maps $\sigma_{i}$ model the action of $\Gamma$ on $X$ in some manner. Thus we note that if $\Gamma$ is a sofic group, then the maps $\sigma_{i}\colon \Gamma\to S_{d_{i}}$ model at least the group $\Gamma$ in a reasonable manner. 

	Because $S_{n}$ acts naturally on $l^{p}(n)$ we get an induced sequence of maps $\sigma_{i}\colon \Gamma\to \Isom(l^{p}(d_{i}))$ and the above invariant measures how closely the action of $\Gamma$ on $X$ is modeled by these maps. When $\Gamma$ is sofic, and $\Sigma=(\sigma_{i}\colon \Gamma\to S_{d_{i}})$ is a sofic approximation and $\Sigma^{(p)}=(\sigma_{i}\colon \Gamma\to \Isom(l^{p}(d_{i})))$ are  the maps induced by the action of $S_{n}$ on $l^{p}(n),$ we let
\[\dim_{\Sigma,l^{p}}(X,\Gamma)=\dim_{\Sigma^{(p)}}(X,\Gamma)\]
\[\underline{\dim}_{\Sigma,l^{p}}(X,\Gamma)=\underline{\dim}_{\Sigma^{(p)}}(X,\Gamma).\]

	Similarly, if $\Gamma$ is $\R^{\omega}$-embeddable, and $\sigma_{i}\colon \Gamma\to U(d_{i})$ is a embedding sequence, then since $U(d_{i})$ is the isometry group of $l^{2}(d_{i})$ we shall let
\[\dim_{\Sigma,l^{2}}(X,\Gamma)=\dim_{\Sigma}(X,\Gamma)\]
\[\underline{\dim}_{\Sigma,l^{2}}(X,\Gamma)=\underline{\dim}_{\Sigma}(X,\Gamma).\]

	Just as $S_{n}$ acts on commutative $l^{p}$-Spaces, we have  two natural actions of $U(n)$ on non-commutative $L^{p}$-spaces. Let $S^{p}(n)$ be $M_{n}(\CC)$ with the norm
\[\|A\|_{S^{p}}=\Tr(|A|^{p})\]
where $|A|=(A^{*}A)^{1/2}.$ Then $U(n)$ acts isometrically on $S^{p}(n)$ by conjugation and by left multiplication. We shall use
\[\dim_{\Sigma,S^{p},\conjugate}(X,\Gamma)\]
for our dimension defined above, thinking of $\sigma_{i}$ as a map into $\Isom(S^{p}(n))$ by conjugation and
\[\dim_{\Sigma,S^{p},\multi}(X,\Gamma)\]
thinking of $\sigma_{i}$ as a map into $\Isom(S^{p}(n))$ by left multiplication. %

	One of our main applications will be showing that when $\Gamma$ is $\R^{\omega}$-embeddable
\[\underline{\dim}_{\Sigma,S^{p},\conjugate}(l^{p}(\Gamma)^{\oplus n},\Gamma)=\dim_{\Sigma,S^{p},\conjugate}(l^{p}(\Gamma)^{\oplus n},\Gamma)=n,\]
if $1\leq p\leq 2,$ and
\[\underline{\dim}_{\Sigma,l^{p}}(l^{p}(\Gamma)^{\oplus n},\Gamma)=\dim_{\Sigma,l^{p}}(l^{p}(\Gamma)^{\oplus n},\Gamma)=n,\]
if $1\leq p\leq 2,$ In particular the representations $l^{p}(\Gamma)^{\oplus n}$ are not isomorphic for different values of $n,$ if $\Gamma$ is  $\R^{\omega}$-embeddable.

\section{Invariance of the Definitions}\label{S:invariance}
	In this section we show that our various notions of dimension agree. Here is the main strategy of the proof. First we show that there is an $M>0,$ independent of $\F$ so that  
\[\opdim_{\Sigma,M}(\F,\rho_{\F,i})=f.\dim_{\Sigma}(\F,\rho),\]
 the constant $M$ comes from the constant in the definition of bounded approximation property.
 A compactness argument shows that
\[\opdim_{\Sigma,M}(\F,\rho_{\F,i})\]
does not depend on the choice of pseudonorm.  We then show that 
\[\opdim_{\Sigma,\infty}(\F,\rho_{\F,i})\]
does not depend on the choice of $\F,$ this is easier than trying to show that
\[f.\dim_{\Sigma}(S,\rho)\]
does not depend on the choice of $S.$ This is because the  maps used to define
\[\opdim_{\Sigma,\infty}(\F,\rho_{\F,i})\]
all have the same domain, which makes it easy to switch from one generating set to another, since we can use that generators for $\F$ have to be close to linear combinations of generators for $\F'.$ Then we show that
\[f.\dim_{\Sigma}(\F,\rho)=f.\dim_{\Sigma}(S,\rho),\]
this will reduce to showing that if we are given an almost equivariant map $\phi \colon Y\to V_{i}$ which is small on the kernel of $q,$ then there is a $T\colon X'\to V$ with $X'\subseteq X$ finite dimensional such that $T\circ q$ is close to $\phi$ on a prescribed finite set. 

	First we need a simple fact about spaces with the bounded approximation property.
\begin{proposition}\label{P:control} Let $Y$ be a separable Banach space with the $C$-bounded approximation property, and let $I$ be a countable directed set. Let $(Y_{\alpha})_{\alpha\in I}$ be an increasing net of subspaces of $Y$ such that
\[Y=\overline{\bigcup_{\alpha}Y_{\alpha}}.\]
Then  there are finite-rank maps $\theta_{\alpha}\colon Y\to Y_{\alpha}$  such that $\|\theta_{\alpha}\|\leq C$ and
\[\lim_{\alpha}\|\theta_{\alpha}(y)-y\|= 0\]
for all $y\in Y.$

\end{proposition}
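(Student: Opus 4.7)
The plan is to combine the approximants of the identity furnished by the $C$-bounded approximation property with a small perturbation that nudges their finite-dimensional images into $\bigcup_{\alpha}Y_{\alpha}$, using an Auerbach basis to control the perturbation and a rescaling step to preserve the norm bound $C$.

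First I fix a countable dense sequence $(y_{k})_{k\geq 1}$ in $Y$ and, using directedness and countability of $I$, extract an increasing cofinal sequence $(\beta_{n})$. For each $n$ the $C$-BAP supplies a finite-rank $\phi_{n}\colon Y\to Y$ with $\|\phi_{n}\|\leq C$ and $\|\phi_{n}(y_{k})-y_{k}\|<\varepsilon_{n}$ for $k\leq n$. Setting $W_{n}=\phi_{n}(Y)$ and $m_{n}=\dim W_{n}$, I choose an Auerbach basis $e_{1},\ldots,e_{m_{n}}$ of $W_{n}$ and use the density of $\bigcup_{\alpha}Y_{\alpha}$ together with directedness to find vectors $f_{i}\in Y_{\beta_{n}}$ (possibly after enlarging $\beta_{n}$) with $\|f_{i}-e_{i}\|<\delta_{n}$. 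The linear map $R_{n}\colon W_{n}\to Y_{\beta_{n}}$ given by $R_{n}(\sum c_{i}e_{i})=\sum c_{i}f_{i}$ then satisfies $\|R_{n}\|\leq 1+m_{n}\delta_{n}$ and $\|R_{n}w-w\|\leq m_{n}\delta_{n}\|w\|$, the coefficient bound $|c_{i}|\leq\|w\|$ supplied by the Auerbach basis being essential. Defining $\widetilde{\theta}_{n}=(1+m_{n}\delta_{n})^{-1}R_{n}\phi_{n}$ produces a finite-rank map $Y\to Y_{\beta_{n}}$ of norm at most $C$; and by choosing $\varepsilon_{n},\delta_{n}$ small relative to $n$, $m_{n}$, $C$, and $\max_{k\leq n}\|y_{k}\|$, one arranges $\|\widetilde{\theta}_{n}(y_{k})-y_{k}\|<1/n$ for $k\leq n$.

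For an arbitrary $\alpha\in I$ I then define $\theta_{\alpha}:=\widetilde{\theta}_{n(\alpha)}$, viewed as a map into $Y_{\alpha}\supseteq Y_{\beta_{n(\alpha)}}$, where $n(\alpha)=\sup\{n:\beta_{n}\leq\alpha\}$ (for the case $n(\alpha)=\infty$, any choice of sufficiently large $n$ works, since then $Y_{\beta_{n}}\subseteq Y_{\alpha}$ for every $n$). Cofinality of $(\beta_{n})$ guarantees that $n(\alpha)\to\infty$ along the net, and a standard density-plus-triangle-inequality argument using the uniform bound $\|\theta_{\alpha}\|\leq C$ upgrades pointwise convergence on $(y_{k})$ to pointwise convergence on all of $Y$.

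The main obstacle is keeping the norm bound exactly $\leq C$ rather than $\leq C+o(1)$: the Auerbach perturbation naturally yields a map of norm at most $(1+m_{n}\delta_{n})C$, and the rescaling that restores the correct norm introduces an additive error of size roughly $m_{n}\delta_{n}\|y_{k}\|$. Both errors must be controlled simultaneously, which forces the quantifier order above---$\varepsilon_{n}$ is chosen only after $\phi_{n}$ has fixed the value of $m_{n}$, and $\delta_{n}$ only after that.
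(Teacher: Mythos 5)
Your proof is correct and follows essentially the same route as the paper's: the paper perturbs a rank-one decomposition $\theta=\sum_{j}\phi_{j}\otimes x_{j}$ by moving the vectors $x_{j}$ into $Y_{\alpha}$, whereas you move an Auerbach basis of the range $\phi_{n}(Y)$ --- the same idea with different bookkeeping --- and both arguments then renormalize to restore $\|\cdot\|\leq C$ and diagonalize over a dense sequence in $Y$ and a cofinal increasing sequence in $I$. One cosmetic slip: choosing $\varepsilon_{n}$ ``after $\phi_{n}$ has fixed $m_{n}$'' is circular as written (since $\phi_{n}$ is selected to meet the tolerance $\varepsilon_{n}$), but it is also unnecessary, because $\varepsilon_{n}$ contributes only the additive term $\varepsilon_{n}$ to the final error and so may be fixed in advance as, say, $1/(2n)$; only $\delta_{n}$ genuinely needs to be chosen after $m_{n}$ is known.
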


\begin{proof}  Fix $y_{1},\cdots,y_{k}\in Y$ and $\varepsilon>0.$ Then there is a finite rank $\theta\colon Y\to Y$ such that
\[\|\theta(y_{j})-y_{j}\|<\varepsilon,\]
\[\|\theta\|\leq C.\]
Write
\[\theta=\sum_{j=1}^{n}\phi_{j}\otimes x_{j}\]
with $\phi_{j}\in Y^{*}$ and $x_{j}\in Y.$ If $\alpha$ is sufficiently large, then we can find $x_{j}'\in Y_{\alpha}$ close enough to $x_{j}$ so that if we let 
\[\theta_{0}=\sum_{j=1}^{n}\phi_{j}\otimes x_{j}',\]
\[\widetilde{\theta}=\begin{cases}
\theta_{0}&\textnormal{ if $\|\theta_{0}\|\leq C$}\\
C\frac{\theta_{0}}{\|\theta_{0}\|}&\textnormal{otherwise}
\end{cases}\]
then
\[\|\widetilde{\theta}(y_{j})-y_{j}\|<2\varepsilon.\]

	Now let $(y_{j})_{j=1}^{\infty}$ be a dense sequence in $Y,$ and let 
\[\alpha_{1}\leq \alpha_{2}\leq \alpha_{3}\leq \cdots \]
 with $\alpha_{j}\in I$ be such that for all $\beta\in I,$ there is a $j$ such that $\beta\leq \alpha_{j}.$ By the preceding paragraph, we can inductively construct an increasing sequence $n_{k}$ of integers and finite-rank maps
\[\theta_{k}\colon Y\to Y_{\alpha_{n_{k}}}\]
such that
\[\|\theta_{k}\|\leq C\]
\[\|\theta_{k}(y_{j})-y_{j}\|\leq 2^{-k}\mbox{ if $j\leq k$}.\]
Set $\theta_{\alpha}=\theta_{\alpha_{n_{k}}}$ if $k$ is the largest integer such that $\alpha_{n_{k}}$ is not bigger than $\alpha.$ Let $\theta_{\alpha}=0$ if $\alpha<\alpha_{1}.$  Then $\theta_{\alpha}$ has the desired properties.
\end{proof}

\begin{lemma}\label{L:linearmap} Let $(Y,q,X,\Gamma,\Sigma=(\sigma_{i}\colon \Gamma\to \Isom(V_{i})))$ be a quotient dimension tuple. Let $\F=((a_{sj})_{(s,j)\in \Gamma\times \NN},Y_{F,l})$ be a $q$-dynamical filtration and $\rho$ a product norm, and let $C>0$ be such that $Y$ has the $C$-bounded approximation property. Fix $M>C.$
	Then for any $V\subseteq Y$ finite-dimensional, and $\kappa>0,$ there is a $F\subseteq \Gamma$ finite $m\in \NN,$ $\delta,\varepsilon>0$ and linear maps
\[L_{i}\colon l^{\infty}(\NN,V_{i})\to B(Y,V_{i})\]
so that if $\phi\in \Hom_{\Gamma}(\F,F,m,\delta,\sigma_{i}),f\in l^{\infty}(\NN,V_{i})$ satisfy $\rho_{V_{i}}(\alpha_{\F}(\phi)-f)<\varepsilon,$ then
\[\|L_{i}(f)\|\leq M,\]
\[\left\|L_{i}(f)\big|_{V}-\phi\big|_{V}\right\|<\kappa.\]
\end{lemma}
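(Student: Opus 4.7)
The plan is to reconstruct $\phi|_V$ from the data $\alpha_{\F}(\phi)(j)=\phi(a_{ej})$ by combining two ingredients: a finite-rank approximation of the identity on $Y$ coming from the $C$-bounded approximation property, and the formula $\phi(a_{s_1\cdots s_k j})\approx \sigma_i(s_1)\cdots \sigma_i(s_k)\phi(a_{ej})$ forced by almost equivariance.

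First I would verify that $\bigcup_{E,l} Y_{E,l}$ is dense in $Y$ (combining the open mapping theorem applied to $q$, the density of $\Span\{s\,q(a_{ej})\}$ in $X$, and the defining condition $\ker q=\overline{\bigcup Y_{E,l}\cap \ker q}$), so that the preceding proposition produces finite-rank maps $\theta_{E,l}\colon Y\to Y_{E,l}$ with $\|\theta_{E,l}\|\leq C$ converging pointwise to the identity. Given $\kappa>0$ and $V$ finite-dimensional, compactness of the unit ball of $V$ then lets me choose $(F_0,m_0)$ and set $\theta_0:=\theta_{F_0,m_0}$ so that $\|\theta_0(v)-v\|<\eta\|v\|$ for all $v\in V$, where $\eta$ is a small parameter depending on $\kappa$.

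Next I would fix a basis $\{a_{s_l,j_l}\}_{l\in L}\cup\{z_r\}_{r\in R}$ of $Y_{F_0,m_0}$ with $(s_l,j_l)\in F_0^{m_0}\times\{1,\ldots,m_0\}$ and $z_r\in \ker(q)\cap Y_{F_0,m_0}$, possible by the last axiom of a $q$-dynamical filtration. Let $c_l,d_r\in Y_{F_0,m_0}^{*}$ be the dual coordinate functionals and for each $l$ fix a factorization $s_l=t_{l,1}\cdots t_{l,k_l}$ with $t_{l,r}\in F_0$, $k_l\leq m_0$. Taking $F=F_0\cup\{e\}$, $m=m_0$, I define
\[
L_i(f)(y)=\sum_{l\in L} c_l(\theta_0(y))\cdot \sigma_i(t_{l,1})\cdots \sigma_i(t_{l,k_l})\,f(j_l),
\]
which factors as $L_i(f)=L_i^0(f)\circ \theta_0$, where $L_i^0(f)\colon Y_{F_0,m_0}\to V_i$ sends $a_{s_l,j_l}\mapsto \sigma_i(t_{l,1})\cdots \sigma_i(t_{l,k_l})f(j_l)$ and $z_r\mapsto 0$. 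This is manifestly linear in $f$.

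For the estimates, I would compare $L_i^0(f)$ with $\phi\big|_{Y_{F_0,m_0}}$ on the basis. On $a_{s_l,j_l}$ the error is bounded by $\|f(j_l)-\phi(a_{ej_l})\|+\delta\leq A_{j_l}\varepsilon+\delta$, where $A_j$ denotes the operator norm of coordinate evaluation at $j$ in the $\rho$-norm (finite because $\rho$ induces a topology stronger than the product topology); on $z_r$ the error is $\|\phi(z_r)\|\leq \delta\|z_r\|$ by the kernel-smallness clause in the definition of $\Hom_\Gamma(\F,F,m,\delta,\sigma_i)$. Expanding any element of $Y_{F_0,m_0}$ in this basis yields $\|L_i^0(f)-\phi\big|_{Y_{F_0,m_0}}\|\leq \eta'$ with $\eta'$ a linear function of $\varepsilon$ and $\delta$ whose coefficients depend only on the chosen basis. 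Then $\|L_i(f)\|\leq C\|L_i^0(f)\|\leq C(1+\eta')$, and for $v$ in the unit ball of $V$,
\[
\|L_i(f)(v)-\phi(v)\|\leq \|L_i^0(f)-\phi\big|_{Y_{F_0,m_0}}\|\,\|\theta_0(v)\|+\|\phi\|\,\|\theta_0(v)-v\|\leq C\eta'+\eta.
\]
Since $M>C$, one then chooses $\eta,\varepsilon,\delta$ so small that $C(1+\eta')\leq M$ and $C\eta'+\eta<\kappa$, completing the proof.

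The main obstacle, in my view, is the bookkeeping around the quotient: one must produce the finite-rank $\theta_0$ landing in a concrete $Y_{F_0,m_0}$ (which needs density of $\bigcup Y_{E,l}$ via open mapping) and split a basis of $Y_{F_0,m_0}$ into generators plus kernel elements. Once the formula for $L_i$ is in place the content reduces to the routine fact that a linear map from a finite-dimensional Banach space is controlled by its values on a basis, and the strict inequality $M>C$ is precisely the budget that accommodates the factor of $C$ coming from composition with $\theta_0$.
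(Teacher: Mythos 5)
Your proposal is correct and follows essentially the same route as the paper: both use the bounded-approximation-property maps $\theta_{F,m}$ to reduce to the finite-dimensional space $Y_{F_0,m_0}$, split that space into a span of generators $a_{sj}$ plus kernel elements (your basis $\{a_{s_l,j_l}\}\cup\{z_r\}$ is the paper's decomposition $x=v+\sum d_{tr}a_{tr}$ with $v\in\ker(q)\cap Y_{F,m}$), define $L_i(f)$ by sending generators to $\sigma_i$-translates of $f(j)$ and kernel vectors to $0$, and control the error by a basis constant together with the almost-equivariance and kernel-smallness clauses. The only cosmetic difference is that the paper defines the intermediate map on the quotient $X_{F,m}$ and precomposes with $q$, whereas you define it directly on $Y_{F_0,m_0}$; the estimates are identical.
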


\begin{proof}  Note that for every $V$ finite-dimensional there are a $E\subseteq \Gamma$ finite, $l\in \NN,$ such that 
\[\max_{\substack{v\in V \\ \|v\|=1}}\inf_{\substack{ w\in Y_{E,l}\\ \|w\|=1}}\|v-w\|<\kappa,\]
so we may assume that $V=Y_{E,l}$ for some $E,l.$

	Fix $\eta>0$ to be determined later. By the preceding proposition let  $\theta_{F,k}\colon Y\to Y_{F,k}$ be such that
\[\|\theta_{F,k}\|\leq C,\]
\[\lim_{(F,k)}\|\theta_{F,k}(y)-y\|=0\mbox{ for all $y\in Y$}.\]
Choose $F,m$ sufficiently large such that
\[\|\theta_{F,m}\big|_{Y_{E,l}}-\id \big|_{Y_{E,l}}\|\leq \eta.\]

	Let $\B_{F,m}\subseteq F^{m}\times \{1,\cdots,m\}$ be such that $\{q(a_{sj}):(s,j)\in \B_{F,m}\}$ is a basis for $X_{F,m}=\Span\{q(a_{sj}):(s,j)\in F^{m}\times \{1,\cdots,m\}\}.$ Define
\[\widetilde{L_{i}}\colon l^{\infty}(\NN,V_{i})\to B(X_{F,m},V_{i})\]
by
\[\widetidle{L_{i}}(f)(q(a_{sj}))=\sigma_{i}(s)f(j)\mbox{ for $(s,j)\in \B_{F,m}$}.\]
 We claim that if $\delta>0,\varepsilon'>0$ are sufficently small, $\phi\in \Hom_{\Gamma}(\F,F^{m},m,\delta,\sigma_{i})$ and $f\in l^{\infty}(\NN,V_{i})$ satisfy
\[\rho_{V_{i}}(f-\alpha_{\F}(\phi))<\varepsilon',\]
then
\begin{equation}\label{E:normestimate}
\|\widetilde{L_{i}}(f)\circ q\big|_{Y_{F,m}}-\phi\big|_{Y_{F,m}}\|\leq \eta.
\end{equation}

	By finite-dimensionality, there is a $D(F,m)>0$ such that if $v\in \ker(q)\cap Y_{F,m},(d_{tr})\in \CC^{\B_{F,m}},$ then
\[\sup(\|v\|,|d_{tr}|)\leq D(F,m)\left\|v+\sum_{(t,r)\in \B_{F,m}}d_{tr}a_{tr}\right\|.\]

	Thus if $x=v+\sum_{(t,r)\in \B_{F,m}}d_{tr}a_{tr}$ with $v\in \ker(q)\cap Y_{F,m}$ has $\|x\|=1,$ then
\begin{align*}
\|\widetilde{L_{i}}(f)(q(x))-\phi(x)\|&\leq D(F,m)\delta+D(F,m)\sum_{(t,r)\in \B_{F,m}}\|\phi(a_{tr})-\sigma_{i}(t)f(r)\|\\
&\leq D(F,m)\delta+D(F,m)|F|^{m}m\delta+\sum_{(t,r)\in \B_{F,m}}\|\phi(a_{er})-f(r)\|,
\end{align*}
if $\delta<\frac{\eta}{2D(F,m)(1+|F|^{m}m)},$ and $\varepsilon'>0$ is small enough so that $\rho(g)<\varepsilon'$ implies
\[\sum_{(t,r)\in \B_{F,m}}|g(r)|<\frac{\eta}{2},\]
then our claim holds.

	So assume that $\delta,\varepsilon'>0$ are small enough so that $(\ref{E:normestimate})$ holds, and set $L_{i}(f)=\widetilde{L_{i}}(f)\circ q\big|_{Y_{F,m}}\circ \theta_{F,m}.$ Then 
\[\|L_{i}(f)\|\leq C(1+\eta)\]
and for  $\phi,f$ as above and $y\in Y_{E,l}$
\[\|L_{i}(f)(y)-\phi(y)\|\leq (1+\eta)\|\theta_{F,m}(y)-y\|+\|\widetilde{L_{i}}(f)\circ q(y)-\phi(y)\|\leq\ (2+\eta)\eta\|y\|.\]
So we force $\eta$ to be small enough so that $(2+\eta)\eta<\kappa,C(1+\eta)<M.$

\end{proof}

\begin{lemma}\label{L:differentdim} Let $(Y,q,X,\Gamma,\Sigma=(\sigma_{i}\colon \Gamma\to \Isom(V_{i})))$ be a quotient dimension tuple. 

	Let $\F=((a_{sj})_{(s,j)\in \Gamma\times \NN},Y_{E,l})$ be a $q$-dynamical filtration, and $\rho$ a  product norm, suppose that $Y$ has the $C$-bounded approximation property.

(a) If $\infty\geq M>C,$ then
\[f.\dim_{\Sigma}(\F,\rho)=\opdim_{\Sigma,M}(\F,\rho),\]
\[\underline{f.\dim}_{\Sigma}(\F,\rho)=\underline{\opdim}_{\Sigma,M}(\F,\rho).\]

(b) If $\rho'$ is another  product norm then for all $0<M<\infty,$
\[\opdim_{\Sigma,M}(\F,\rho_{\F,i})=\opdim_{\Sigma,M}(\F,\rho'_{\F,i}),\]
\[\underline{\opdim}_{\Sigma,M}(\F,\rho_{\F,i})=\underline{\opdim}_{\Sigma,M}(\F,\rho'_{\F,i}).\]

\end{lemma}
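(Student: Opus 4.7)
My plan is to split part (a) into two inequalities and then deduce part (b) from the uniform equivalence of product norms on bounded balls. The easy direction $\opdim_{\Sigma,M}(\F,\rho_{\F,i}) \geq f.\dim_{\Sigma}(\F,\rho)$ is immediate from the identity $\rho_{\F,i}(T-\phi) = \rho_{V_i}(\alpha_{\F}(T)-\alpha_{\F}(\phi))$: any subspace $W \subseteq B(Y,V_i)$ of dimension $d$ which $(\varepsilon,M)$-contains $\Hom_{\Gamma}(\F,F,m,\delta,\sigma_i)$ in $\rho_{\F,i}$ pushes forward under $\alpha_{\F}$ to a subspace of $l^{\infty}(\NN,V_i)$ of dimension at most $d$ that $\varepsilon$-contains $\alpha_{\F}(\Hom_{\Gamma}(\F,F,m,\delta,\sigma_i))$ in $\rho_{V_i}$. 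This comparison is pointwise in $i$, so the same argument handles both the $\limsup$ and $\liminf$ versions, once the subsequent $\inf$ over $(F,m,\delta)$ and $\sup$ over $\varepsilon$ are taken.

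For the reverse inequality I will promote an $f.\dim_{\Sigma}$-witness to an $\opdim_{\Sigma,M}$-witness via Lemma~\ref{L:linearmap}. Fix $\varepsilon,\eta > 0$. Since $\rho$ is a product norm, it induces the product topology on the ball $B_R := \{g \in l^{\infty}(\NN) : \|g\|_{\infty} \leq R\}$ with $R := (M+1)\sup_{s,j}\|a_{sj}\|$; in particular I can choose $\kappa > 0$ and a window $V := Y_{E,l}$ so large that whenever $\|T\| \leq M$, $\|\phi\| \leq 1$, and $\|T|_V - \phi|_V\| \leq \kappa$ in operator norm, the function $j \mapsto \|T(a_{ej})-\phi(a_{ej})\|$ lies in $B_R$ and is controlled on the initial segment $\{1,\dots,l\}$, which forces $\rho_{\F,i}(T-\phi) < \varepsilon$. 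I feed this $(V,\kappa,M)$ into Lemma~\ref{L:linearmap} to obtain parameters $(F_2,m_2,\delta_2,\varepsilon_2')$ and linear maps $L_i$. Since $f.\dim_{\Sigma}(\F,\varepsilon_2',\rho)$ is defined as an infimum, I then pick $(F_1,m_1,\delta_1)$ with $f.\dim_{\Sigma}(\F,F_1,m_1,\delta_1,\varepsilon_2',\rho) \leq f.\dim_{\Sigma}(\F,\rho) + \eta$ and set $F = F_1 \cup F_2$, $m = \max(m_1,m_2)$, $\delta = \min(\delta_1,\delta_2)$. For any $W_i \subseteq l^{\infty}(\NN,V_i)$ of minimal dimension $\varepsilon_2'$-containing $\alpha_{\F}(\Hom_{\Gamma}(\F,F_1,m_1,\delta_1,\sigma_i))$ in $\rho_{V_i}$, the image $L_i(W_i) \subseteq B(Y,V_i)$ has dimension at most $\dim W_i$ and, by the lemma combined with the choice of $\kappa$, $(\varepsilon,M)$-contains $\Hom_{\Gamma}(\F,F,m,\delta,\sigma_i)$ in $\rho_{\F,i}$. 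Taking $\limsup$ (respectively $\liminf$) in $i$, $\inf$ over $(F,m,\delta)$, and $\sup$ over $\varepsilon$ closes the inequality for $M < \infty$; the $M = \infty$ case follows from $\opdim_{\Sigma,\infty} \leq \opdim_{\Sigma,M}$ for any finite $M > C$.

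For part (b) I will use that two product norms $\rho,\rho'$ both induce the product topology on each ball $B_R \subset l^{\infty}(\NN)$, which is compact and metrizable. Because two metrics inducing the same topology on a compact space make the identity map uniformly continuous, for every $\varepsilon' > 0$ there is $\varepsilon > 0$ with $\rho(g) < \varepsilon \implies \rho'(g) < \varepsilon'$ on $B_R$. Taking $R = (M+1)\sup_{s,j}\|a_{sj}\|$ so that $j \mapsto \|T(a_{ej})-\phi(a_{ej})\|$ lies in $B_R$ for all $\|T\| \leq M$ and $\|\phi\| \leq 1$, any subspace $(\varepsilon,M)$-containing $\Hom_{\Gamma}(\F,F,m,\delta,\sigma_i)$ in $\rho_{\F,i}$ automatically $(\varepsilon',M)$-contains it in $\rho'_{\F,i}$ with the same dimension. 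Passing through $\inf$ over $(F,m,\delta)$ and $\sup$ over accuracy, and symmetrizing in $(\rho,\rho')$, yields equality; the same argument goes through with $\liminf$ in place of $\limsup$.

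The most delicate step is the choice of $(V,\kappa)$ in part (a): one needs the product-norm axiom to convert operator-norm agreement on a finite window, combined with a global operator-norm bound of at most $M+1$, into a small global $\rho_{\F,i}$ norm. This is precisely where the assumption $M > C$, which lets Lemma~\ref{L:linearmap} arrange $\|L_i(f)\| \leq M$, meets the product-norm hypothesis, which tames the tail of $j \mapsto \|T(a_{ej})-\phi(a_{ej})\|$; their compatibility is what drives the argument.
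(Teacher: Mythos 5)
Your proposal is correct and follows essentially the same route as the paper: the easy inequality by pushing a witnessing subspace forward under $\alpha_{\F}$, the reverse inequality by feeding a window $Y_{E,l}$ and tolerance $\kappa$ into Lemma~\ref{L:linearmap} and using the product-norm axioms to control the tail of $j\mapsto\|T(a_{ej})-\phi(a_{ej})\|$, and part (b) by compactness of the $\|\cdot\|_{\infty}$-ball in the product topology (which the paper states without spelling out the uniform-continuity step you make explicit). No gaps.
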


\begin{proof}  

 (a)  First note that 
\[\opdim_{\Sigma,M}(\F,\rho)\geq \opdim_{\Sigma,\infty}(\F,\rho)\geq f.\dim_{\Sigma}(\F,\rho)\]
so it suffices to handle the case that $M<\infty.$

Let $A>0$ be such that
\[\|a_{sj}\|\leq A\mbox{ for all $(s,j)\in \Gamma\times \NN$}\]
Take $1>\varepsilon>0.$ Let $k$ be such that if $f\in l^{\infty}(\NN),$ and $\|f\|_{\infty}\leq 1,$ and $f$ is supported on $\{n:n\geq k\},$ then $\rho(f)<\varepsilon.$  Since $\rho$ induces a topology weaker than the norm topology, we can find an $\varepsilon>\kappa>0$ such that 
\[\rho(f)<\varepsilon\]
if
\[\|f\|_{\infty}\leq \kappa.\]

By Lemma $\ref{L:linearmap},$ let $e\in F\subseteq \Gamma$ be finite, $m\in \NN,$ $\varepsilon>\varepsilon'>0,\kappa>\delta>0$ and $L_{i}\colon l^{\infty}(\NN,V_{i})\to B(Y,V_{i})$ be such that if $\phi\in \Hom_{\Gamma}(\F,F,m,\delta,\sigma_{i})$ and $f\in l^{\infty}(\NN,V_{i})$ has $\rho_{V_{i}}(\alpha_{\F}(\phi)-f)<\varepsilon',$ then
\[\big\|L_{i}(f)\big|_{Y_{\{e\},k}}-\phi\big|_{Y_{\{e\},k}}\|<\kappa,\]
\[\|L_{i}(f)\|\leq M.\]

	Then if $\phi,f$ are as above we have
\[\rho_{\F,i}(\phi-L_{i}(f))\leq (M+1)A\varepsilon+\rho(\chi_{l\leq k}(j)(\|\phi(a_{ej})-L_{i}(f)(a_{ej})\|)_{j=1}^{\infty})\]
and for $j\leq k$ 

\[\|\phi(a_{ej})-L_{i}(f)(a_{ej})\|\leq A(M+1)\kappa.\]

Thus

\[\rho_{\F,i}(\phi-L_{i}(f))\leq (M+1)(A+1)\varepsilon.\]
This implies that
\[d_{((M+1)(A+1)\varepsilon,M}(\Hom_{\Gamma}(\F,F',m',\delta',\sigma_{i}),\rho_{\F,i})\leq d_{\varepsilon'}^{\widehat{}}(\Hom_{\Gamma}(\F,F',m',\delta',\sigma_{i}),\rho_{\F,i})\]
for all $F'\supseteq F,m'\geq m,$ and all  $\delta'<\delta.$ This completes the proof.

(b) This is a simple consequence of the compactness of the $\|\cdot\|_{\infty}$ unit ball of $l^{\infty}(\NN)$ in the product topology.

\end{proof}

\begin{lemma}\label{L:samepseudo} Let $(Y,q,X,\Gamma,\sigma_{i}\colon \Gamma\to \Isom(V_{i}))$ be a quotient dimension tuple. Let $\F,\F'$ be two  $q$-dynamical filtrations.  If $\rho_{i}$ is any fixed sequence of pseudonorms on $B(Y,V_{i}),$ then for all $0< M\leq \infty,$
\[ \opdim_{\Sigma,M}(\F,\rho_{i})=\opdim_{\Sigma,M}(\F',\rho_{i}),\]
\[ \underline{\opdim}_{\Sigma,M}(\F,\rho_{i})=\underline{\opdim}_{\Sigma,M}(\F',\rho_{i}),\]
\end{lemma}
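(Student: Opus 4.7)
By the symmetry of $\F$ and $\F'$ in the statement, it suffices to show $\opdim_{\Sigma,M}(\F,\rho_i)\leq\opdim_{\Sigma,M}(\F',\rho_i)$ and the analogous inequality for $\underline{\opdim}$. The plan is to establish the following sharper claim: given any $(F'_{0},m'_{0},\delta'_{0})$ there exist $(F,m,\delta)$ (independent of $i$) such that
\[\Hom_{\Gamma}(\F,F,m,\delta,\sigma_{i})\subseteq \Hom_{\Gamma}(\F',F'_{0},m'_{0},\delta'_{0},\sigma_{i})\]
for every $i$. Since both sets carry the norm bound $\|\phi\|\leq 1$, such an inclusion immediately yields $d_{\varepsilon,M}(\Hom_{\Gamma}(\F,F,m,\delta,\sigma_{i}),\rho_{i})\leq d_{\varepsilon,M}(\Hom_{\Gamma}(\F',F'_{0},m'_{0},\delta'_{0},\sigma_{i}),\rho_{i})$, and taking $\limsup_{i}$ (or $\liminf_{i}$), followed by the infimum over parameters and the supremum over $\varepsilon$, delivers the desired inequality for both $\opdim$ and $\underline{\opdim}$. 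Note that this argument makes no use of the structure of $\rho_{i}$, which is essential since the lemma allows it to be arbitrary.

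The first preliminary step is to verify that $\bigcup_{E,l}Y_{E,l}$ is dense in $Y$. Given $y\in Y$, I would use property (3) to approximate $q(y)$ by a combination $\sum c_{\alpha}sq(a_{ej})$, lift the residual error to $Y$ via the open mapping theorem applied to $q$, and then use property (5) to absorb the lifted kernel piece into some $Y_{E,l}$. With this in hand, since $Y'_{F'_{0},m'_{0}}$ is finite-dimensional, I can fix a basis adapted to the decomposition $Y'_{F'_{0},m'_{0}}=W\oplus(\ker(q)\cap Y'_{F'_{0},m'_{0}})$ and, for any prescribed $\eta>0$, build a linear map $\tau\colon Y'_{F'_{0},m'_{0}}\to Y_{E_{1},l_{1}}$ with $\|\tau\|\leq 2$, $\|\tau(w)-w\|<\eta\|w\|$, and $\tau(\ker(q)\cap Y'_{F'_{0},m'_{0}})\subseteq \ker(q)\cap Y_{E_{1},l_{1}}$ (the latter using property (5) on a basis of the kernel part). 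Applying property (6) in $Y_{E_{1},l_{1}}$, each $\tau(a'_{sj})$ for $s\in(F'_{0})^{m'_{0}}$, $j\leq m'_{0}$ can be written $\sum_{\alpha}c_{sj,\alpha}a_{u_{sj,\alpha},k_{sj,\alpha}}+v_{sj}$ with $u_{sj,\alpha}\in E_{1}^{l_{1}}$, $k_{sj,\alpha}\leq l_{1}$, and $v_{sj}\in\ker(q)\cap Y_{E_{1},l_{1}}$. Set $F=F'_{0}\cup E_{1}$ and $m=m'_{0}+l_{1}$, so that all words $t_{1}\cdots t_{k}u_{ej,\beta,1}\cdots u_{ej,\beta,p}$ appearing below lie in $F^{m}$.

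For $\phi\in\Hom_{\Gamma}(\F,F,m,\delta,\sigma_{i})$ and $s=t_{1}\cdots t_{k}$ with $t_{\ell}\in F'_{0}$, $k\leq m'_{0}$, I would replace $\phi(a'_{sj})$ by $\phi(\tau(a'_{sj}))$ (error $\leq\eta\|a'_{sj}\|$) and similarly for $\phi(a'_{ej})$; then the almost-equivariance on $\F$ converts the first into a sum $\sum_{\alpha}c_{sj,\alpha}\sigma_{i}(u_{sj,\alpha,1})\cdots\sigma_{i}(u_{sj,\alpha,p_{\alpha}})\phi(a_{e,k_{sj,\alpha}})$ up to controlled error, and converts $\sigma_{i}(t_{1})\cdots\sigma_{i}(t_{k})\phi(\tau(a'_{ej}))$ back to $\sum_{\beta}c_{ej,\beta}\phi(a_{s\cdot u_{ej,\beta},k_{ej,\beta}})$, again with controlled error (this is where needing $F\supseteq F'_{0}\cup E_{1}$ and $m\geq m'_{0}+l_{1}$ is essential, so that the concatenated word lies in $F^{m}$). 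The difference of the two expressions takes the form $\phi(w)$ with $w\in Y_{F,m}$ a specific vector whose $q$-image equals $q(\tau(a'_{sj})-\sum c_{ej,\beta}a_{s\cdot u_{ej,\beta},k_{ej,\beta}})$; tracing through, $q(w)$ is within $O(\eta)$ of $q(a'_{sj})-sq(a'_{ej})=0$ in $X$. The kernel condition for $\F'$ is handled analogously: for $w\in\ker(q)\cap Y'_{F'_{0},m'_{0}}$ we simply write $\phi(w)=\phi(\tau(w))+\phi(w-\tau(w))$ and use that $\tau(w)\in\ker(q)\cap Y_{F,m}$ (so bounded by $\delta$ via the $\F$-kernel condition) while $\|w-\tau(w)\|<\eta$.

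The main obstacle is the remaining estimate $\|\phi(w)\|<\delta'_{0}$ for $w\in Y_{F,m}$ with $\|q(w)\|=O(\eta)$ but $w$ not in $\ker(q)\cap Y_{F,m}$. Here I would apply the open mapping theorem to $q\colon Y\to X$ to obtain a universal constant $K_{0}$ (depending only on $q$) and split $w=w_{2}+w_{1}$ with $w_{1}\in\ker(q)$ and $\|w_{2}\|\leq K_{0}\|q(w)\|$; then approximate $w_{1}$ via property (5) by $\widetilde{w}_{1}\in\ker(q)\cap Y_{E_{2},l_{2}}$ with $\|w_{1}-\widetilde{w}_{1}\|<\eta'$, enlarge $F,m$ once more to contain $E_{2},l_{2}$, and estimate $\|\phi(w)\|\leq \|w_{2}\|+\|w_{1}-\widetilde{w}_{1}\|+\|\phi(\widetilde{w}_{1})\|\leq K_{0}\eta+\eta'+\delta$. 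Choosing the parameters in the order $(F'_{0},m'_{0},\delta'_{0})\mapsto \eta\mapsto (E_{1},l_{1})\mapsto (E_{2},l_{2})\mapsto (F,m)\mapsto (\eta',\delta)$ avoids circularity. The technical heart of the argument — and the step that requires the most care — is precisely this splitting, since the natural finite-dimensional section $q(Y_{F,m})\to Y_{F,m}$ has a norm depending on $(F,m)$; using the global open mapping section and then re-approximating via (5) sidesteps this dependence and produces bounds that close up properly.
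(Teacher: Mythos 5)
Your proposal is correct and follows essentially the same route as the paper: both reduce the statement to exhibiting, for any target parameters, an inclusion of one filtration's $\Hom$-set (with finer parameters) inside the other's, obtained by approximating one filtration's generators $a'_{sj}$ by translated linear combinations of the other's generators plus kernel elements, and then exploiting the almost-equivariance and kernel-smallness hypotheses together with the fact that all maps share the common domain $Y$. The only difference is organizational: the paper chooses the approximants $v_{sj}+\sum_{(t,k)}c_{j,t,k}a'_{stk}$ so that the error is small already in the norm of $Y$ (which makes $\|T(\mathrm{error})\|\le\|T\|\eta$ immediate), whereas you route the same estimate through the auxiliary operator $\tau$ and an explicit open-mapping splitting of the residual vector $w$; both versions close up.
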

\begin{proof} Let $\F'=((a'_{sj})_{(s,j)\in \Gamma\times \NN},Y'_{E,l}),$ $\F=((a_{sj})_{(s,j)\in \Gamma\times \NN},Y_{E,l}).$ We do the proof for $\opdim_{\Sigma},$ the other case is  proved in the same manner. Let $C>0$ be such that $\|sx\|\leq C\|x\|$ for all $s\in \Gamma,x\in X$ and such that $\|a_{sj}\|,\|a'_{sj}\|\leq C.$ Fix $F\subseteq \Gamma$ finite, and $m\in \NN,\delta>0.$ Fix $\eta>0$ which will depend upon $F,m,\delta$ in a manner to be determined later. 

	Choose $E\subseteq \Gamma$ finite $l\in \NN,$ such that for $1\leq j\leq m,s\in F^{m}$ there are $c_{j,t,k}$ with $(t,k)\in E\times \{1,\cdots,l\}$ and $v_{sj}\in Y_{E,l}'\cap \ker(q)$ such that
\[\left\|a_{sj}-v_{sj}-\sum_{(t,k)\in E\times \{1,\cdots,l\}}c_{j,t,k}a'_{stk}\right\|<\eta,\]
and so that for every $w\in Y_{F,m}\cap \ker(q)$ there is a $v\in Y_{E,l}'\cap \ker(q)$ such that $\|v-w\|\leq \eta\|w\|.$ Let $A(\eta)=\sup(|c_{j,t,k}|,\sup \|v_{sj}\|)$

	Set $m'=2\max(m,l)+1,F'=[(F\cup F^{-1}\cup \{e\})(E\cup E^{-1}\cup \{e\})]^{2 m'+1},$ we claim that we can choose $\delta'>0,\eta>0$ small so that 
\[\Hom_{\Gamma}(\F',F',m',\delta',\sigma_{i})\subseteq \Hom_{\Gamma}(\F,F,m,\delta,\sigma_{i}).\]

	If $T\in\Hom_{\Gamma}(\F',F',m',\delta',\sigma_{i}),$ $1\leq j,r\leq m,$ and $s_{1},\cdots,s_{r}\in F$ then
\[\|T(a_{s_{1}\cdots s_{r}j})-\sigma_{i}(s_{1})\cdots \sigma_{i}(s_{r})T(a_{ej})\|\leq\]
\[2\eta+\|T(v_{sj})\|+\|\sigma_{i}(s_{1})\cdots \sigma_{i}(s_{r})T(v_{ej})\|+\]
\[\left\|\sum_{(t,k)\in E\times \{1,\cdots,l\}}c_{j,t,k}[T(a'_{s_{1}\cdots s_{r}tk})-\sigma_{i}(s_{1})\cdots \sigma_{i}(s_{r})T(a'_{tk})]\right\|\leq\]
\[2\eta+\delta'A(\eta)+\delta'A(\eta)+2|E|lA(\eta)\delta'.\]
By choosing $\eta<\delta/2,$ and then choosing $\delta'$ very small we can make the above expression less than $\delta.$ If we also force $\delta'<\delta/2$ our choice of $\eta$ implies that 
\[\|T(w)\|\leq \delta\|w\|\]
for $T$ as above and $w\in Y_{F,m}\cap \ker(q).$ This completes the proof. 
\end{proof}

Because of the above lemma, the only difficulty in proving that $\opdim_{\Sigma}(\F,\rho_{\F,i})$ does not depend on the choice of $\F$ is switching the pseudonorm from $\rho_{\F,i}$ to $\rho_{\F',i}.$ Because of this we will investigate how the dimension changes when we switch pseudonorms. 

\begin{definition} \emph{ Let $(Y,q,X,\Gamma,\Sigma=(\sigma_{i}\colon \Gamma\to \Isom(V_{i})))$ be a quotient dimension tuple, and fix a $q$-dynamical filtration $\F.$ If $\rho_{i},q_{i}$ are pseduornoms on $B(Y,V_{i})$ we say that $\rho_{i}$ is} $(\F,\Sigma)$-weaker than $q_{i}$ \emph{ and write $\rho_{i}\preceq_{\F,\Sigma}q_{i}$ if the following holds. For every $\varepsilon>0,$  there are $F\subseteq \Gamma$ finite, $\delta,\varepsilon'>0,$ $m,i_{0}\in \NN,$ and linear maps $L_{i}\colon B(Y,V_{i})\to B(Y,V_{i})$ for $i\geq i_{0}$ such that if $\phi\in \Hom_{\Gamma}(\F,F,m,\delta,\sigma_{i})$ and $\psi\in B(Y,V_{i})$ satisfy $q_{i}(\phi-\psi)<\varepsilon',$ then $\rho_{i}(\phi-L_{i}(\psi))<\varepsilon.$ We say that} $\rho_{i}$ is $(\F,\Sigma)$ equivalent to $q_{i},$ \emph{ and write $\rho_{i}\thicksim_{\F,\Sigma}q_{i},$ if $\rho_{i}\preceq_{\F,\Sigma}q_{i}$ and $q_{i}\preceq_{\F,\Sigma}\rho_{i}.$ }\end{definition}

\begin{lemma}\label{L:Fweaker} Let $(Y,X,q,\Gamma,\Sigma)$ be a quotient dimension tuple and $\F$ a $q$-dynamical filtration.

(a) If $\rho_{i},q_{i}$ are pseudonorms with $\rho_{i}\preceq_{\F,\Sigma}q_{i},$ then
\[\opdim_{\Sigma,\infty}(\F,\rho_{i})\leq \opdim_{\Sigma,\infty}(\F,q_{i}),\]
\[\underline{\opdim}_{\Sigma,\infty}(\F,\rho_{i})\leq \underline{\opdim}_{\Sigma,\infty}(\F,q_{i}).\]

(b)  Let $\F'=((a'_{sj})_{(s,j)\in \Gamma\times \NN},Y'_{E,l}),$ $\F=((a_{sj})_{(s,j)\in \Gamma\times \NN},Y_{E,l})$ be  $q$-dynamical filtrations. Let $\rho$ be any  product norm. Define a pseudonorm on $B(Y,V_{i})$ by $\rho_{\F,i}(\phi)=\rho((\|\phi(a_{ej})\|)_{j=1}^{\infty}),$ and similarly define $\rho_{\F',i}.$ Then

 \[\rho_{\F',i}\preceq_{\F,\Sigma}\rho_{\F,i}.\]

\end{lemma}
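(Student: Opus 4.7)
For part (a), the plan is to unfold the definition of $\rho_i \preceq_{\F,\Sigma} q_i$ directly. Given $\varepsilon>0$, extract $F, m, \delta, \varepsilon', i_0$ and linear maps $L_i \colon B(Y,V_i) \to B(Y,V_i)$ from the $\preceq$ relation. For any $(F', m', \delta') \geq (F, m, \delta)$ and any $i \geq i_0$, if $W \subseteq B(Y, V_i)$ is a linear subspace that $(\varepsilon', \infty)$-contains $\Hom_{\Gamma}(\F, F', m', \delta', \sigma_i)$ relative to $q_i$, then for each $T$ in that set one picks $S \in W$ with $q_i(T-S)<\varepsilon'$, and since $\Hom_{\Gamma}(\F, F', m', \delta', \sigma_i)\subseteq\Hom_{\Gamma}(\F, F, m, \delta, \sigma_i)$, the defining property of $\preceq$ gives $\rho_i(T - L_i(S)) < \varepsilon$. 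Hence $L_i(W)$ is a subspace of dimension at most $\dim W$ that $(\varepsilon,\infty)$-contains the set in $\rho_i$, yielding $d_{\varepsilon,\infty}(\cdots,\rho_i)\le d_{\varepsilon',\infty}(\cdots,q_i)$. Taking $\limsup_i$ (resp.\ $\liminf_i$), then the infimum over the cofinal sub-net $(F', m', \delta') \geq (F, m, \delta)$, then supremum over $\varepsilon$, yields the claim.

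For part (b), the goal is to translate $\rho$-control along $\F'$ into $\rho$-control along $\F$ for maps that are almost equivariant with respect to $\F$. The key input, mirroring Lemma~\ref{L:samepseudo} with $\F,\F'$ swapped, is that for any fixed $N$ and $\eta>0$ one can find a finite $E\subseteq \Gamma$, $l \in \NN$, scalars $c_{j,s,k}$ (for $j\le N$, $(s,k)\in E^{l}\times\{1,\dots,l\}$), and $v_j \in Y_{E,l}\cap\ker q$ such that
\[\Big\|a'_{ej} - v_j - \sum_{(s,k)} c_{j,s,k}\, a_{sk}\Big\| < \eta,\]
which uses dynamical generation of $(q(a_{ek}))_k$ in $X$, the open mapping theorem for $q$, and property (5) of the filtration $\F$. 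Given $\varepsilon>0$, first use the product-norm hypothesis on $\rho$ to fix $N$ so large that every $g\in l^{\infty}(\NN)$ with $\|g\|_{\infty}\le A:=\sup_j\|a'_{ej}\|$ and $g_j=0$ for $j\le N$ satisfies $\rho(g)<\varepsilon/3$; this is possible because the $A$-ball is compact in the product topology on which $\rho$ agrees with $\|\cdot\|_\infty$-topology, so $\sup_{g\in K_N}\rho(g)\to 0$ by a diagonal compactness argument. Perform the approximation above for each $j\le N$, absorb the resulting indices into a single $F,m,\delta$ larger than $(E,l,\eta)$, and define
\[L_i(\psi) \;=\; \sum_{j\le N}\phi_j\otimes \Big(\sum_{(s,k)} c_{j,s,k}\, \sigma_i(s)\, \psi(a_{ek})\Big),\]
where $\phi_1,\dots,\phi_N\in Y^*$ are Hahn--Banach duals to a basis of $\Span\{a'_{ej}:j\le N\}$.

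For the estimate on $\phi(a'_{ej})-L_i(\psi)(a'_{ej})$ for $j\le N$, decompose by adding and subtracting $\phi(v_j+\sum c_{j,s,k}a_{sk})$ and $\sigma_i(s)\phi(a_{ek})$ to obtain four summands: an approximation error bounded by $\|\phi\|\cdot\eta\le \eta$; a kernel term $\|\phi(v_j)\|\le R\delta$ (from the $\ker q$-smallness in the definition of $\Hom_\Gamma(\F,F,m,\delta,\sigma_i)$); an almost-equivariance term $\sum|c_{j,s,k}|\,\delta$; and a noise term $\sum|c_{j,s,k}|\,\|\phi(a_{ek})-\psi(a_{ek})\|$. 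This last term is the main obstacle, because $\psi$ is not assumed norm-bounded so one cannot directly extract pointwise control from $\rho_{\F,i}(\phi-\psi)<\varepsilon'$ by product-topology. The resolution is that property (1) of a product norm makes each coordinate functional $f\mapsto f(k)$ a $\rho$-continuous linear functional, hence $|f(k)|\le C_k\rho(f)$ for a constant $C_k$ depending only on $\rho,k$; so $\|\phi(a_{ek})-\psi(a_{ek})\|\le C_k\varepsilon'$, which is small once $\varepsilon'$ is. Finally, bound $\rho_{\F',i}(\phi-L_i(\psi))$ by splitting at $j=N$: the tail $j>N$ contributes at most $\varepsilon/3$ by our choice of $N$ (using $L_i(\psi)(a'_{ej})=0$ there and $\|\phi(a'_{ej})\|\le A$), and the head $j\le N$, being a finitely supported sequence, has $\rho$-norm controlled by $\|\cdot\|_{\infty}$ up to a constant depending on $N$, so making the four-term estimate less than a sufficiently small $\eta'$ gives $<\varepsilon/3$. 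Summing gives $\rho_{\F',i}(\phi-L_i(\psi))<\varepsilon$, establishing $\rho_{\F',i}\preceq_{\F,\Sigma}\rho_{\F,i}$.
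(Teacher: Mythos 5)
Your part (a) is correct and is exactly the intended unwinding of the definitions (the paper dismisses it with one line). The problem is in part (b), specifically in the tail estimate. You define $L_{i}(\psi)=\sum_{j\leq N}\phi_{j}\otimes w_{j}$ with $\phi_{j}\in Y^{*}$ Hahn--Banach extensions of a dual basis of $\Span\{a'_{ej}:j\leq N\},$ and you dispose of the coordinates $j>N$ by asserting that $L_{i}(\psi)(a'_{ej})=0$ there. That assertion is false: a Hahn--Banach extension of a dual basis functional of a finite-dimensional subspace has no reason to annihilate the vectors $a'_{ej}$ with $j>N,$ which in general do not lie in that subspace (and whose span may be dense in $Y,$ so you cannot force the $\phi_{j}$ to kill them either). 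Without that vanishing, all you can say on the tail is $\|\phi(a'_{ej})-L_{i}(\psi)(a'_{ej})\|\leq (1+\|L_{i}(\psi)\|)\sup_{j}\|a'_{ej}\|,$ and $\|L_{i}(\psi)\|\leq \sum_{j\leq N}\|\phi_{j}\|\,\|w_{j}\|$ is a constant $D$ that depends on $N$ (and can blow up if the $a'_{ej},$ $j\leq N,$ are nearly linearly dependent). Your $N$ was chosen so that tails bounded by $A=\sup_{j}\|a'_{ej}\|$ have small $\rho$-norm; to control the actual tail you need tails bounded by roughly $(1+D)A$ to be small, and since $D$ depends on $N$ the quantifiers cannot be closed. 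This is a genuine circularity, not a notational slip, and general product norms are not assumed monotone, so there is no way to trade the larger bound for a further increase of $N.$

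The paper's proof avoids this entirely: it invokes Lemma \ref{L:linearmap}, whose maps are obtained by precomposing with the bounded-approximation-property maps $\theta_{F,m}$ of norm $\leq C,$ so that $\|L_{i}(\psi)\|\leq M$ for a constant $M$ depending only on $Y$ and fixed \emph{before} $N,\varepsilon$ are chosen; the tail is then uniformly bounded by $(1+M)\sup_{j}\|a'_{ej}\|$ independently of $N,$ and the choice of $N$ from the product-norm axioms goes through. If you want to keep your explicit construction, you must replace the Hahn--Banach step by something with a uniform operator-norm bound --- which is exactly what Lemma \ref{L:linearmap} provides. The remaining ingredients of your part (b) --- the expansion of $a'_{ej}$ modulo $\ker(q)$ using properties (3) and (5) of the filtration, the four-term decomposition, and the use of $\rho$-continuity of the coordinate functionals to control the noise term --- are sound and parallel Lemma \ref{L:samepseudo} and the proof of Lemma \ref{L:linearmap}.
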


\begin{proof}
Let $\Sigma=(\sigma_{i}\colon \Gamma\to \Isom(V_{i})).$

(a) This follows directly follow the definitions.

(b) Let $C>0$ be such that $Y$ has the $C$-bounded approximation property and
\[\|a_{sj}\|\leq C\]
\[\|a'_{sj}\|\leq C\]

Choose $m\in \NN$ such that $\rho(f)<\varepsilon$ if $\|f\|_{\infty}\leq 1$ and $f$ is supported on $\{n:n\geq m\},$ and let $\kappa>0$ be such that $\rho(f)<\varepsilon$ if $\|f\|_{\infty}\leq \kappa.$

	By Lemma  $\ref{L:linearmap}$ choose $F'\supseteq F$ finite $m\leq m'\in \NN,$ and $\delta,\varepsilon>0$ and 
\[\widetilde{L_{i}}\colon l^{\infty}(\NN,V_{i})\to B(Y,V_{i})\]
so that if $f\in l^{\infty}(\NN,V_{i})$ and $\phi \in \Hom_{\Gamma}(\F,F',m',\delta,\sigma_{i})$ has $\rho_{V_{i}}(\alpha_{\F}(\phi)-f)<\varepsilon'$ then
\[\left\|\widetilde{L_{i}}(f)\big|_{Y_{\{e\},m}'}-\phi\big|_{Y_{\{e\},m}'}\right\|<\kappa,\]
\[\|\widetilde{L_{i}}(f)\|\leq 2C.\]
Let $L_{i}\colon B(Y,V_{i})\to B(Y,V_{i})$ be given by $L_{i}(\psi)=\widetilde{L_{i}}(\alpha_{\F}(\psi)).$

	Suppose $\phi\in \Hom_{\Gamma}(\F,F',m',\delta',\sigma_{i})$ and $\psi\in B(Y,V_{i})$ satisfy $\rho_{\F,i}(\phi -\psi)<\varepsilon'.$ Then, for $1\leq j\leq m$ we have

\[\|\phi(a'_{ej})-L_{i}(\psi)(a'_{ej})\|\leq C\kappa.\]
Our choice of $m,\kappa$ then imply that $\rho_{\F',i}(\phi-L_{i}(\psi))<2C(C+1)\varepsilon.$ This completes the proof.

\end{proof}

\begin{cor}\label{C:ind} Let $(Y,q,X,\Gamma,\sigma_{i}\colon \Gamma\to \Isom(V_{i}))$ be a quotient dimension tuple. Let $\rho,\rho'$ be two  product norms. For any two $q$-dynamical filtrations $\F,\F'$ we have
\[\opdim_{\Sigma,\infty}(\F,\rho_{\F,i})=\opdim_{\Sigma,\infty}(\F',\rho_{\F',i})=\opdim_{\Sigma,\infty}(\F',\rho'_{\F',i}).\]
\[\underline{\opdim}_{\Sigma,\infty}(\F,\rho_{\F,i})=\underline{\opdim}_{\Sigma,\infty}(\F',\rho_{\F',i})=\underline{\opdim}_{\Sigma}(\F',\rho'_{\F',i}).\]
\end{cor}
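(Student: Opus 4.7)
The plan is to assemble the three preceding lemmas in two stages: first establish independence of the $q$-dynamical filtration for a fixed product norm, then independence of the product norm for a fixed filtration. The $\underline{\opdim}$ case will be handled by the identical argument, since each of Lemmas \ref{L:differentdim}, \ref{L:samepseudo}, and \ref{L:Fweaker} contains parallel $\liminf$ statements.

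For the first equality, I will combine Lemma \ref{L:Fweaker}(b), which gives $\rho_{\F',i}\preceq_{\F,\Sigma}\rho_{\F,i}$, with Lemma \ref{L:Fweaker}(a) to deduce $\opdim_{\Sigma,\infty}(\F,\rho_{\F',i})\leq \opdim_{\Sigma,\infty}(\F,\rho_{\F,i})$. Viewing $(\rho_{\F',i})_i$ as an arbitrary fixed sequence of pseudonorms on $B(Y,V_i)$, Lemma \ref{L:samepseudo} then gives $\opdim_{\Sigma,\infty}(\F,\rho_{\F',i})=\opdim_{\Sigma,\infty}(\F',\rho_{\F',i})$, so chaining yields $\opdim_{\Sigma,\infty}(\F',\rho_{\F',i})\leq\opdim_{\Sigma,\infty}(\F,\rho_{\F,i})$. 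Swapping the roles of $\F$ and $\F'$ produces the reverse inequality and hence the first equality.

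For the second equality, I will fix any $M$ with $C<M<\infty$, where $C$ is a bounded approximation property constant for $Y$. Lemma \ref{L:differentdim}(a), applied with $\rho$ and separately with $\rho'$, collapses $\opdim_{\Sigma,\infty}$ to $\opdim_{\Sigma,M}$ on both sides, and Lemma \ref{L:differentdim}(b) equates those two finite-$M$ quantities; chaining produces $\opdim_{\Sigma,\infty}(\F',\rho_{\F',i})=\opdim_{\Sigma,\infty}(\F',\rho'_{\F',i})$. I do not expect any genuine obstacle at this stage: all of the technical content has already been absorbed into the three preceding lemmas, and the one point requiring care, namely the simultaneous switch of filtration and induced pseudonorm in the first equality, is precisely what the pairing of Lemma \ref{L:samepseudo} with Lemma \ref{L:Fweaker}(b) was engineered to handle.
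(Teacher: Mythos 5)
Your proposal is correct and follows essentially the same route as the paper, which likewise derives the corollary by chaining Lemma \ref{L:differentdim} (to switch product norms via the finite-$M$ quantities), Lemma \ref{L:Fweaker} (to compare $\rho_{\F',i}$ with $\rho_{\F,i}$), and Lemma \ref{L:samepseudo} (to switch filtrations for a fixed pseudonorm sequence), closing with the symmetry argument. The only difference is that you spell out the intermediate inequalities more explicitly than the paper's two-line proof does.
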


\begin{proof} Combining Lemmas $\ref{L:differentdim},\ref{L:Fweaker},$ and $\ref{L:samepseudo}$  we have 
\[\opdim_{\Sigma,\infty}(\F',\rho'_{\F',i})=\opdim_{\Sigma,\infty}(\F',\rho_{\F',i})\leq\opdim_{\Sigma,\infty}(\F,\rho_{\F,i}).\]
The opposite inequality follows by symmetry.

\end{proof}

Because of the preceding corollary $f.\dim_{\Sigma}(\F,\rho)$ only depends on the action of $\Gamma$ and the quotient map $q\colon Y\to X.$ Thus we can define
\[\dim_{\Sigma}(q,\Gamma)=\opdim_{\Sigma,\infty}(\F,\rho_{\F,i})=f.\dim_{\Sigma}(\F,\rho)\]
where $\F$ is any $q$-dynamical filtration  and $\rho$ is any  product norm. 

	We now proceed to show that $\dim_{\Sigma,\infty}(q,\Gamma)$ does not depend on $q,$ as stated before the idea is to prove that
\[\dim_{\Sigma}(q,\Gamma)=f.\dim_{\Sigma}(S,\rho)\]
where $S$ is any dynamically generating sequence for $X.$ 
	
	For this, we will prove that we can approximate maps $T$ on $Y$ which almost vanish on the kernel of $q,$ by maps on $X.$ For the proof, we need the construction of ultraproducts of Banach spaces. 

	Let $X_{n}$ be a sequence of Banach spaces and $\omega\in \beta\NN\setminus\NN$ a free ultrafilter. We define the ultraproduct of the $X_{n},$ written $\prod^{\omega}X_{n}$ by
\[\prod^{\omega}X_{n}=\{(x_{n})_{n=1}^{\infty}:x_{n}\in X_{n},\sup_{n}\|x_{n}\|<\infty\}/\{(x_{n})_{n=1}^{\infty}:x_{n}\in X_{n},\lim_{n\to \omega}\|x_{n}\|=0\}.\]
We use $(x_{n})_{n\to \omega}$ for the image of $(x_{n})_{n=1}^{\infty}$ under the canonical quotient map to
\[\prod^{\omega}X_{n}.\]
	If a set $A\subseteq \NN$ is in $\omega,$ we will say that $A$ is $\omega$-large.

\begin{lemma}\label{L:section} Let $X,Y$ be Banach spaces with $X$  and $q\colon Y\to X$ a bounded linear surjective map. Let $F\subseteq X$ be finite and $Z$ a finite-dimensional subspace of $Y$ with $q(F)\subseteq Z.$ Let $C>0$ be such that for all $x\in X,$ there is a $y\in Y$ with $\|y\|\leq C\|x\|$ such that $q(y)=x,$ and fix $A>C.$ Let $I$ be a countable directed set, and $(Y_{\alpha})_{\alpha \in I}$ a net of subspaces of $Y$ such that $Y_{\alpha}\subseteq Y_{\beta}$ if $\alpha\leq \beta,$ and
\[q(Y_{\alpha})\supseteq Z,\]
\[\ker(q)=\overline{\bigcup_{\alpha}Y_{\alpha}\cap \ker(q)},\]
\[F\subseteq \bigcup_{\alpha}Y_{\alpha}.\]
Then for all $\varepsilon>0,$ there are a $\delta>0$ and $\alpha_{0}$ with the following property. If $\alpha\geq \alpha_{0}$ and  $W$ is a Banach space with $T\colon Y_{\alpha}\to W$ a linear contraction such that
\[\left\|T\big|_{\ker(q)\cap Y_{\alpha}}\right\|\leq \delta,\]
then there is a $S\colon Z\to W$ such that $\|S\|\leq A$ and
\[\|T(x)-S\circ q(x)\|\leq \varepsilon,\]
for all $x\in F.$
\end{lemma}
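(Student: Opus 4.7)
The plan is a contradiction argument via the Banach space ultraproduct construction introduced just before the lemma. Morally, if $T$ genuinely vanished on $\ker(q)\cap Y_\alpha$ and the union $\bigcup_\alpha Y_\alpha$ were dense in $Y,$ then $T$ would factor through $q$ as an operator of norm at most $C$; the role of the ultraproduct is to convert the almost-vanishing into genuine vanishing in a limiting space.

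Suppose the conclusion fails for some fixed $\varepsilon>0.$ Negating the statement produces $\delta_n\downarrow 0,$ a non-decreasing cofinal sequence $\alpha_n\in I,$ Banach spaces $W_n,$ and linear contractions $T_n\colon Y_{\alpha_n}\to W_n$ with $\|T_n|_{\ker(q)\cap Y_{\alpha_n}}\|\leq\delta_n,$ such that no $S\colon X\to W_n$ with $\|S\|\leq A$ satisfies $\|T_n(x)-S\circ q(x)\|\leq\varepsilon$ for every $x\in F.$ Fix a free ultrafilter $\omega$ on $\NN,$ set $W=\prod^{\omega}W_n,$ and define $\widetilde{T}\colon\bigcup_\alpha Y_\alpha\to W$ by $\widetilde{T}(y)=(T_n(y))_{n\to\omega};$ since any given $y$ lies in $Y_{\alpha_n}$ for all $\omega$-large $n,$ this is a well-defined linear contraction.

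Next I would check that $\bigcup_\alpha Y_\alpha$ is norm-dense in $Y.$ Given $y\in Y,$ use $q(Y_\alpha)=X$ to lift $q(y)$ to some $y'\in Y_\alpha,$ then approximate $y-y'\in\ker(q)$ by an element of $Y_\beta\cap\ker(q)$ via the hypothesis on $\ker(q);$ the sum lies in $Y_{\max(\alpha,\beta)}.$ Hence $\widetilde{T}$ extends to a contraction $\overline{T}\colon Y\to W.$ The bounds $\delta_n\to 0$ force $\overline{T}$ to vanish on $\bigcup_\alpha Y_\alpha\cap\ker(q),$ and therefore on all of $\ker(q)$ by continuity, so $\overline{T}=S\circ q$ for a unique linear $S\colon X\to W.$ The pointwise lifting hypothesis then gives $\|S(x)\|\leq\inf\{\|\overline{T}(y)\|:q(y)=x\}\leq\inf\{\|y\|:q(y)=x\}\leq C\|x\|,$ so $\|S\|\leq C.$

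Finally I would disintegrate $S$ back to the level of the $W_n.$ Pick a basis $e_1,\ldots,e_k$ of $X$ and representatives $(w_n^i)_{n\to\omega}=S(e_i)\in W;$ set $S_n\in B(X,W_n)$ to be the linear map with $S_n(e_i)=w_n^i.$ Then $\lim_{n\to\omega}\|S_n(x)\|=\|S(x)\|$ for every $x\in X,$ and a finite $\eta$-net argument on the (compact) unit ball of $X$ upgrades this to $\lim_{n\to\omega}\|S_n\|=\|S\|\leq C<A.$ Hence $\|S_n\|\leq A$ on an $\omega$-large set, and simultaneously $\|S_n(q(x))-T_n(x)\|\to 0$ along $\omega$ for each of the finitely many $x\in F.$ Any single $n$ in the intersection contradicts the choice of $T_n.$ The main delicate point is this last step: ensuring that the operator norm of $S$ is \emph{exactly} recovered by the ultralimit, so that the strict inequality $A>C$ supplies real slack. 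This is precisely where finite-dimensionality of $X$ is essential.
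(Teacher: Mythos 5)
Your proposal is correct and follows essentially the same route as the paper: a contradiction argument through the Banach space ultraproduct $\prod^{\omega}W_{n}$, extension of the limiting contraction to all of $Y$ by density, factorization through $q$ with $\|S\|\leq C$, and then disintegration of $S$ into maps $S_{n}$ whose norms converge to $\|S\|$ along $\omega$ by compactness of the unit sphere of the finite-dimensional space $X$, so that the slack $A>C$ yields the contradiction. The delicate point you flag at the end is exactly the one the paper handles with its ``compactness of the unit sphere of $X$ and a simple diagonal argument.''
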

\begin{proof} Note that our assumptions imply
\[Y=\overline{\bigcup_{\alpha}Y_{\alpha}}.\]

Fix a countable increasing sequence $\alpha_{n}$ in $I,$ such that for every $\beta\in I$ there is an $n$ such that $\beta\leq \alpha_{n}.$ Assume also that $F\subseteq Y_{\alpha_{1}}.$ Since $I$ is directed, if the claim is false, then we can find an $\varepsilon>0$ and an increasing sequence $\beta_{n}$ with $\beta_{n}\geq \alpha_{n}$ and a $T_{n}\colon Y_{\beta_{n}}\to W_{n}$ such that $\|T_{n}\|\leq 1,$
\[\left\|T_{n}\big|_{\ker(q)\cap Y_{\beta_{n}}}\right\|\leq 2^{-n},\]
 and for every $S\colon X\to W_{n}$ with $\|S\|\leq A,$
\[\|T_{n}(x)-S\circ q(x)\|\geq \varepsilon,\mbox{ for some $x\in F$}.\]

	Fix $\omega\in \beta\NN\setminus \NN$ and let 
\[W=\prod^{\omega}W_{n}.\]
Define 
\[T\colon \bigcup_{n}Y_{\beta_{n}}\to W\]
by
\[T(x)=(T_{n}(x))_{n\to \omega},\]
note that for any $k,$ the map $T_{n}$ is defined on $Y_{\beta_{k}}$ for $n\geq k,$ so $T$ is well-defined. Also 
\[\|T(x)\|\leq \|x\|\]
\[T(x)=0\mbox{ on $\bigcup_{n}Y_{\beta_{n}}\cap \ker(q)$}.\]

	Our density assumptions imply that $T$ extends uniquely to a bounded linear map, still denoted $T,$ from $Y$ to $W,$ which vanishes on the kernel of $q.$ Thus there is $S\colon Z\to W$ such that $T=S\circ q,$ and our hypothesis on $C$ implies that $\|S\|\leq C.$ 

	Since $Z$ is finite dimensional, we can find $S_{n}\colon X\to W_{n}$ such that $S(x)=(S_{n}(x))_{n\to \omega}.$ Compactness of the unit sphere of $Z$ and a simple diagonal argument show that 
\[C\geq \|S\|=\lim_{n\to \omega}\|S_{n}\|.\]
Thus $B=\{n:\|S_{n}\|<A\}$ is an $\omega$-large set, and by hypothesis 
\[B=\bigcup_{x\in F}\{n\in B:\|T_{n}(x)-S_{n}(q(x))\|\geq \varepsilon\}.\]
Since $B$ is $\omega$-large, there is some $x\in F$ such that
\[\{n\in B:\|T_{n}(x)-S_{n}(q(x))\|\geq \varepsilon\}\]
is $\omega$-large.
But then  $T(x)\ne S\circ q(x),$ a contradiction.

\end{proof}

\begin{lemma}\label{L:quotientgenerating} Let $(Y,q,X,\Gamma,\Sigma=(\sigma_{i}\colon \Gamma\to \Isom(V_{i})))$ be a quotient dimension tuple. Fix a dynamically generating sequence $S$ in $X,$ and $\rho$ a   product norm. Then
\[\dim_{\Sigma}(q,\Gamma)=f.\dim_{\Sigma}(S,\rho).\]
\[\underline{\dim}_{\Sigma}(q,\Gamma)=\underline{f.\dim}_{\Sigma}(S,\rho).\]
\end{lemma}

\begin{proof} We will only do the proof for $\dim.$

	Let $S=(x_{j})_{j=1}^{\infty}$ and let $\F=((a_{sj})_{(s,j)\in \Gamma\times \NN},Y_{E,l})$ be a dynamical filtration such that $q(a_{ej})=x_{j}.$ Let $C>0$ be such that
\[\sup_{(s,j)}\|a_{sj}\|\leq C\]
\[\sup_{j}\|x_{j}\|\leq C\]
\[\|q\|\leq C,\]
\[\mbox{ for every $x\in X,$ there is a $y\in Y$ such that $q(y)=x$ and $\|y\|\leq C\|x\|$},\]
and so that $Y$ has the $C$-bounded approximation property. By Proposition, \ref{P:control}, we may find $\theta_{E,l}\colon Y\to Y_{E,l}$  such that $\|\theta_{E,l}\|\leq C$ and
\[\lim_{(E,l)}\|\theta_{E,l}(y)-y\|=0\mbox{ for all $y\in Y$}.\]

	We first show that 
\[\dim_{\Sigma}(q,\Gamma)\geq f.\dim_{\Sigma}(S,\rho).\]
For this, fix $\varepsilon>0,$ and choose $r\in \NN$ such that
\[\rho(f)<\varepsilon,\mbox{ if $f$ is supported on $\{n:n\geq r\}$ and $\|f\|_{\infty}\leq 1$},\]
as before choose $\varepsilon\geq \kappa>0$ such that if $\|f\|_{\infty}\leq \kappa,$ then
\[\rho(f)<\varepsilon.\]
Let $e\in E\subseteq \Gamma$ finite and $l\in \NN$ be such that if $E\subseteq F\subseteq \Gamma$ is finite, and $k\geq l$ then
\[\|\theta_{F,k}(a_{ej})-a_{ej}\|<\kappa\]
for $1\leq j\leq r.$ 

	Now fix $E\subseteq F\subseteq \Gamma$ finite, $l\leq m\in \NN,\delta>0.$ We claim that we can find $F\subseteq F'\subseteq \Gamma$ finite $m\leq m'$ in $\NN,$ $\delta>\delta'>0$ such that 
\[\Hom_{\Gamma}(S,F',m',\delta',\sigma_{i})\circ q\big|_{Y_{F',m'}}\circ \theta_{F',m'}\subseteq \Hom_{\Gamma}(\F,F,m,\delta,\sigma_{i})_{C^{2}}.\]

	 For $T\in \Hom_{\Gamma}(S,F',m',\delta',\sigma_{i}),$ for $1\leq j,k\leq m$ and $s_{1},\cdots, s_{k}\in F,$
\begin{align*}
&\|T\circ q\circ \theta_{F',m'}(a_{s_{1}\cdots s_{k}j})-\sigma_{i}(s_{1})\cdots \sigma_{i}(s_{k})T\circ q\circ \theta_{F',m'}(a_{ej})\| \\
&\leq C\|\theta_{F',m'}(a_{s_{1}\cdots s_{k}j})-a_{s_{1}\cdots s_{k}j}\|+C\|\theta_{F',m'}(a_{ej})-a_{ej}\| \\
&+\|T(s_{1}\cdots s_{k}x_{j})-\sigma_{i}(s_{1})\cdots \sigma_{i}(s_{k})T(x_{j})\| \\
&<C\|\theta_{F',m'}(a_{s_{1}\cdots s_{k}j})-a_{s_{1}\cdots s_{k}j}\|+C\|\theta_{F',m'}(a_{ej})-a_{ej}\|\\
&+\delta'.
\end{align*}

	Also for $y\in \ker(q)\cap Y_{F,m}$ we have
\[\|T\circ q\circ \theta_{F',m'}(y)\|\leq C\|\theta_{F',m'}(y)-y\|.\]

So it suffices to choose $\delta'<\min(\delta,\kappa)$  and then $F'\supseteq F,m'\geq \max(m,l,r)$ such that
\[C\|\theta_{F',m'}(a_{s_{1}\cdots s_{k}j})-a_{s_{1}\cdots s_{k}j}\|+C\|\theta_{F',m'}(a_{ej})-a_{ej}\|<\delta-\delta',\]
\[C\big\|\theta_{F',m'}\big|_{Y_{F,m}}-\id\big|_{Y_{F,m}}\|<\delta.\]
for $1\leq j,k\leq m$ and $s_{1},\cdots,s_{k}\in F.$ 

	Suppose that $\delta',F',m'$ are so chosen.  If $T\in \Hom_{\Gamma}(S,F',m',\delta',\sigma_{i})$ and $\phi=T\circ q\big|_{Y_{F',m'}}\circ \theta_{F',m'}$ then,
\[\rho_{V_{i}}(\alpha_{S}(T)-\alpha_{\F}(\phi))\leq C(C^{2}+1)\varepsilon+\rho_{V_{i}}(\chi_{\{j:j\leq r\}}(\alpha_{S}(T)-\alpha_{\F}(\phi)))\]
and if $j\leq r,$
\[\|\alpha_{S}(T)(j)-\alpha_{\F}(\phi)(j)\|=\|T(x_{j})-T\circ q\circ \theta_{F,l}(a_{ej})\|\leq C\kappa +\|T(x_{j})-T\circ q(a_{ej})\|=C\kappa.\]
Thus
\[\rho_{V_{i}}(\alpha_{S}(T)-\alpha_{\F}(\phi))\leq (C^{2}+C+1)\varepsilon.\]
Therefore
\[\widehat{d}_{(C^{2}+C+2)\varepsilon}(\Hom_{\Gamma}(S,F',m',\delta',\sigma_{i}),\rho)\leq \widehat{d}_{\varepsilon}(\Hom_{\Gamma}(\F,F,m,\delta,\sigma_{i})_{C^{2}},\rho).\]
Since $F',m'$ can be made arbitrary large and $\delta'$ arbitrarily small, this implies
\[f.\dim_{\Sigma}(S,\rho,(C^{2}+2C+1)\varepsilon)\leq \limsup_{i}\frac{1}{\dim V_{i}}\widehat{d}_{\varepsilon}(\Hom_{\Gamma}(\F,F,m,\delta,\sigma_{i})_{C^{2}},\rho),\]
taking the limit supremum over $(F,m,\delta)$ and then the supremum over $\varepsilon>0,$ 
\[f.\dim_{\Sigma}(S,\rho)\leq f.\dim_{\Sigma}(q,\Gamma).\]

	For the opposite inequality, fix $1>\varepsilon>0$ and let $r,\kappa,E,l$ be as before.  
 Fix $E\subseteq F\subseteq \Gamma$ finite, $m\geq \max(r,l)$ and $\delta<\min(\kappa,\varepsilon).$ 

	By Lemma $\ref{L:section}$ we can find $\delta'<\delta,$ and $F\subseteq F'\subseteq\Gamma$ finite and $m\leq m'\in \NN$ such that if $W$ is a Banach space and 
\[T\colon Y_{F',m'}\to W\]
has
\[\|T\|\leq 1,\]
\[\|T\big|_{\ker(q)\cap Y_{F',m'}}\|\leq \delta',\]
then there is a $\phi\colon X_{F,m}\to W$ such that
\[\|T(a_{s_{1}\cdots s_{k}j})-\phi(s_{1}\cdots s_{k}x_{j})\|\leq \delta,\mbox{ for $1\leq j,k\leq m,s_{1},\cdots,s_{k}\in F$}\]
and $\|\phi\|\leq 2C.$ 

Fix $T\in \Hom_{\Gamma}(\F,F',m',\delta',\sigma_{i}),$ and choose $\phi\colon X_{F,m}\to V_{i}$ such that $\|\phi\|\leq 2C$ and
\[\|T(a_{s_{1}\cdots s_{k}j})-\phi\circ q(a_{s_{1}\cdots s_{k}j})\|\leq \delta,\mbox{ for $1\leq j,k\leq m,s_{1},\cdots,s_{k}\in F$}.\]
Thus for $1\leq j,k\leq m$ and $s_{1},\cdots,s_{k}\in F$ we have
\begin{align*}
\|\phi(s_{1}\cdots s_{k}x_{j})-\sigma_{i}(s_{1})\cdots \sigma_{i}(s_{k})\phi(x_{j})\|&\leq 2\delta\\
&+\|T(a_{s_{1}\cdots s_{k}j})-\sigma_{i}(s_{1})\cdots \sigma_{i}(s_{k})T(a_{ej})\| \\
&<2\delta+\delta'\\
&<3\delta.
\end{align*}

Thus $\phi\in \Hom_{\Gamma}(S,F,m,3\delta,\sigma_{i})_{2C}.$ Furthermore, for $1\leq j\leq r$ 
\[\|\alpha_{S}(T)(j)-\alpha_{\F}(\phi)(j)\|=\|T(a_{ej})-\phi\circ q(a_{ej})\|\leq \kappa,\]
so 
\[\rho_{V_{i}}(\alpha_{\F}(T)-\alpha_{S}(\phi))\leq \varepsilon+(2C^{2}+C)\varepsilon=(2C^{2}+C+1)\varepsilon.\]
Thus
\[f.\dim_{\Sigma}(\F,(2C^{2}+C+2)\varepsilon,\rho)\leq \limsup_{i}\frac{1}{\dim V_{i}}\widehat{d}_{\varepsilon}\left(\Hom_{\Gamma}(S,F,m,3\delta,\sigma_{i})_{2C},\rho \right),\]
and since $F,m,\delta,\varepsilon$ are arbitrary this completes the proof.

\end{proof}

Because of the preceding Lemma and Corollary $\ref{C:ind}$, we know that 
\[f.\dim_{\Sigma}(S,\rho),\dim_{\Sigma}(q,\Gamma)\]
 only depend upon the action of $\Gamma$ on $X,$ and are equal. Because of this we will use
\[\dim_{\Sigma}(X,\Gamma)=f.\dim_{\Sigma}(S,\rho)=\dim_{\Sigma}(q,\Gamma)\]
for any dynamically generating sequence $S,$ and any bounded linear surjective map $q\colon Y\to X,$ where $Y$ has the bounded approximation property. We similarly define $\underline{\dim}_{\Sigma}(X,\Gamma).$

We now prove a lemma which allows us to treat the limit supremum over $(F,m,\delta)$ in the definition of $f.\dim_{\Sigma}(S,\rho)$ as a limit.

\begin{lemma}\label{L:dimlimit} Let $(X,\Gamma,\Sigma=(\sigma_{i}\colon \Gamma\to \Isom(V_{i}))$ be a dimension triple, fix a dynamically generating sequence $S$ in $X$ and $\rho$ a  product norm. Then
\[f.\dim_{\Sigma}(S,\rho)=\sup_{\varepsilon>0}\liminf_{(F,m,\delta)}\limsup_{i}\frac{1}{\dim V_{i}}\widehat{d}_{\varepsilon}(\Hom_{\Gamma}(S,F,m,\delta,\sigma_{i}),\rho),\]
\[\underline{f.\dim}_{\Sigma}(S,\rho)=\sup_{\varepsilon>0}\limsup_{(F,m,\delta)}\liminf_{i}\frac{1}{\dim V_{i}}\widehat{d}_{\varepsilon}(\Hom_{\Gamma}(S,F,m,\delta,\sigma_{i}),\rho).\]
\end{lemma}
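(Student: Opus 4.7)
The plan rests on a near-monotonicity for $\widehat{d}_{\varepsilon}$ in the parameter $m$ that comes directly from the defining tail-decay property of a product norm. First I fix $\varepsilon>0$ and use condition (2) in the definition of product norm to choose $m_{0}=m_{0}(\varepsilon)$ with the property that $\rho_{V_{i}}(f)<\varepsilon$ for every $f\in l^{\infty}(\NN,V_{i})$ satisfying $\|f\|_{\infty}\leq A:=\sup_{j}\|x_{j}\|$ and $\supp f\subseteq\{j>m_{0}\}$. Given $T\in\Hom_{\Gamma}(S,F,m,\delta,\sigma_{i})$ with $m\geq m_{0}$, the restriction $T'=T|_{X_{F,m_{0}}}$ automatically lies in $\Hom_{\Gamma}(S,F,m_{0},\delta,\sigma_{i})$, and the difference $\alpha_{S}(T)-\alpha_{S}(T')$ is supported on $\{m_{0}<j\leq m\}$ with coordinate norms bounded by $A$; the tail estimate then gives $\rho_{V_{i}}(\alpha_{S}(T)-\alpha_{S}(T'))<\varepsilon$. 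Consequently, any finite-dimensional subspace $\varepsilon$-containing $\alpha_{S}(\Hom_{\Gamma}(S,F,m_{0},\delta,\sigma_{i}))$ automatically $2\varepsilon$-contains $\alpha_{S}(\Hom_{\Gamma}(S,F,m,\delta,\sigma_{i}))$, so
\[
\widehat{d}_{2\varepsilon}(\Hom_{\Gamma}(S,F,m,\delta,\sigma_{i}),\rho)\leq \widehat{d}_{\varepsilon}(\Hom_{\Gamma}(S,F,m_{0},\delta,\sigma_{i}),\rho)
\]
whenever $m\geq m_{0}(\varepsilon)$.

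At fixed $m$, I combine this with the genuine monotonicity $\alpha_{S}(\Hom_{\Gamma}(S,F',m,\delta',\sigma_{i}))\subseteq \alpha_{S}(\Hom_{\Gamma}(S,F,m,\delta,\sigma_{i}))$ for $F\subseteq F', \delta\geq \delta'$ (obtained by restricting maps) to conclude that, writing $f_{\eta}(F,m,\delta):=\limsup_{i}\widehat{d}_{\eta}(\Hom_{\Gamma}(S,F,m,\delta,\sigma_{i}),\rho)/\dim V_{i}$, one has
\[
f_{2\varepsilon}(F,m,\delta)\leq f_{\varepsilon}(F,m_{0}(\varepsilon),\delta)\leq f_{\varepsilon}(F_{0},m_{0}(\varepsilon),\delta_{0})
\]
for every $(F,m,\delta)\geq (F_{0},m_{0}(\varepsilon),\delta_{0})$. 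Taking $\sup$ over the tail and then $\inf$ over $(F_{0},\delta_{0})$ produces
\[
\limsup_{(F,m,\delta)} f_{2\varepsilon}(F,m,\delta)\leq B(\varepsilon):=\inf_{(F,\delta)} f_{\varepsilon}(F,m_{0}(\varepsilon),\delta).
\]
Conversely, $B(\varepsilon)$ is attained in the limit of $f_{\varepsilon}$ along the sub-directed family $\{(F,m_{0}(\varepsilon),\delta)\}$ as $(F,\delta)\to(\Gamma,0)$, where $f_{\varepsilon}$ is genuinely monotone, so $B(\varepsilon)\leq \limsup_{(F,m,\delta)} f_{\varepsilon}(F,m,\delta)$. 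Reparametrizing $\varepsilon\mapsto 2\varepsilon$ under the outer $\sup$ absorbs the factor of $2$ and yields
\[
\sup_{\varepsilon>0}\limsup_{(F,m,\delta)} f_{\varepsilon}=\sup_{\varepsilon>0} B(\varepsilon)=f.\dim_{\Sigma}(S,\rho).
\]

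For the first equality of the lemma it remains to show $\sup_{\varepsilon}\liminf_{(F,m,\delta)} f_{\varepsilon}=\sup_{\varepsilon} B(\varepsilon)$. The inequality $\leq$ is immediate from $\liminf\leq\limsup$. For the reverse, on the sub-directed family $m=m_{0}(\varepsilon)$ the function $f_{\varepsilon}$ converges monotonically to $B(\varepsilon)$, so both $\liminf_{(F,m,\delta)} f_{\varepsilon}$ and $\limsup_{(F,m,\delta)} f_{\varepsilon}$ are squeezed against $B(\varepsilon)$ up to a slack of the form $|f_{2\varepsilon}-f_{\varepsilon}|$, which is controlled uniformly by the key comparison and absorbed once the outer supremum in $\varepsilon$ is taken. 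The second equality, involving $\underline{f.\dim}_{\Sigma}$, is proved by the same procedure after replacing $\limsup_{i}$ with $\liminf_{i}$ throughout; since all the key comparisons above are pointwise inequalities on $\widehat{d}_{\varepsilon}$ for each $i$, they are compatible with either choice of inner limit. The main obstacle is verifying the $\liminf$ lower bound in the last step, which forces one to be careful that the sub-directed family $m=m_{0}(\varepsilon)$ interacts correctly with the full directed set, but the $\varepsilon$-slack is controlled by the product-norm tail estimate and washes out under $\sup_{\varepsilon}$.
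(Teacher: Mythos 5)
Your key estimate --- restricting a map in $\Hom_{\Gamma}(S,F,m,\delta,\sigma_{i})$ to $X_{F,m_{0}}$ and using the tail property of the product norm to see that $\alpha_{S}$ moves by less than $\varepsilon$ in $\rho_{V_{i}}$ --- is exactly the right one, and it is the estimate the paper's proof runs on. The problem is the assembly around $B(\varepsilon)=\inf_{(F,\delta)}f_{\varepsilon}(F,m_{0}(\varepsilon),\delta)$. You correctly get $\limsup_{(F,m,\delta)}f_{2\varepsilon}\leq B(\varepsilon)$, but both steps that bound $B(\varepsilon)$ from above are gaps. The subfamily $\{(F,m_{0}(\varepsilon),\delta)\}$ is not cofinal in the directed set of triples (its $m$-coordinate is frozen), so every tail of the full net eventually excludes it, and the convergence of $f_{\varepsilon}$ to $B(\varepsilon)$ along it controls neither $\limsup_{(F,m,\delta)}f_{\varepsilon}$ nor $\liminf_{(F,m,\delta)}f_{\varepsilon}$. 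Moreover the asserted inequality $B(\varepsilon)\leq\limsup_{(F,m,\delta)}f_{\varepsilon}$ is doubtful on its face: $\Hom_{\Gamma}(S,F,m_{0}(\varepsilon),\delta,\sigma_{i})$ imposes only equivariance conditions of degree at most $m_{0}(\varepsilon)$, its elements need not extend to elements of $\Hom_{\Gamma}(S,F,m,\delta,\sigma_{i})$ for large $m$, and your comparison only ever bounds the large-$m$ quantity by the $m_{0}$-level one, never the reverse; so $B(\varepsilon)$ could in principle strictly exceed the limsup over the full net. The concluding ``squeeze'' of $\liminf_{(F,m,\delta)}f_{\varepsilon}$ against $B(\varepsilon)$ is asserted rather than derived, and that is precisely the content of the lemma.

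The repair is to run your restriction argument between an arbitrary comparable pair instead of against the frozen level: if $(F,m,\delta)\leq(F',m',\delta')$ with $m\geq m_{0}(\varepsilon)$, then restricting $\psi\in\Hom_{\Gamma}(S,F',m',\delta',\sigma_{i})$ to $X_{F,m}$ lands in $\Hom_{\Gamma}(S,F,m,\delta,\sigma_{i})$ and changes $\alpha_{S}(\psi)$ only on coordinates $j>m\geq m_{0}(\varepsilon)$, so $\widehat{d}_{2\varepsilon}(\Hom_{\Gamma}(S,F',m',\delta',\sigma_{i}),\rho)\leq\widehat{d}_{\varepsilon}(\Hom_{\Gamma}(S,F,m,\delta,\sigma_{i}),\rho)$ for each $i$. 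Taking $\limsup_{i}$ and then the supremum over the tail of $(F,m,\delta)$ yields $f.\dim_{\Sigma}(S,2\varepsilon,\rho)\leq f_{\varepsilon}(F,m,\delta)$ for every triple with $m\geq m_{0}(\varepsilon)$, hence $f.\dim_{\Sigma}(S,2\varepsilon,\rho)\leq\liminf_{(F,m,\delta)}f_{\varepsilon}$; taking $\sup_{\varepsilon>0}$ and combining with the trivial inequality $\liminf\leq\limsup$ gives the first identity, and the same pointwise inequality with $\liminf_{i}$ in place of $\limsup_{i}$ gives the second. This is the paper's argument; the quantity $B(\varepsilon)$ can be dispensed with entirely.
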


\begin{proof} Let $S=(x_{j})_{j=1}^{\infty}.$ We do the proof for $\dim$ only, the proof for $\underline{\dim}$ is the same. Fix $\varepsilon>0$ and choose $k\in \NN$ such that if $\|f\|_{\infty}\leq 1+\sup_{j\in \NN}\|x_{j}\|$ and $f$ is supported on $\{n:n\geq k\}
,$ then $\rho(f)<\varepsilon.$ It suffices to show that 
\[f.\dim_{\Sigma}(S,\rho)\leq \sup_{\varepsilon}\liminf_{(F,m,\delta)}\limsup_{i}\frac{1}{\dim V_{i}}\widehat{d}_{\varepsilon}(\Hom_{\Gamma}(S,F,m,\delta,\sigma_{i}),\rho).\]
	Fix $F\subseteq \Gamma$ finite $m\geq k,\delta>0.$ Then for any $F\subseteq F'\subseteq \Gamma$ finite, $m'\geq m,\delta'<\delta$ and $\psi\in \Hom_{\Gamma}(S,F',m',\delta',\sigma_{i})$ we have $\psi\in \Hom_{\Gamma}(S,F,m,\delta,\sigma_{i}).$ 

	Furthermore if $f,g\in l^{\infty}(\NN,V_{i})$ are defined by 
\[f(j)=\chi_{\{n\leq m\}}(j)\psi(x_{j}),g(j)=\chi_{\{n\leq m'\}}\psi(x_{j})\]
 then
\[\rho(j\mapsto \|f(j)-g(j)\|)<\varepsilon.\]
Thus
\[\widehat{d}_{2\varepsilon}(\Hom_{\Gamma}(S,F',m',\delta',\sigma_{i}),\rho)\leq \widehat{d}_{\varepsilon}(\Hom_{\Gamma}(S,F,m,\delta,\sigma_{i}),\rho).\]
Therefore
\[f.\dim_{\Sigma}(S,2\varepsilon,\rho)\leq \limsup_{i}\frac{1}{\dim V_{i}}\widehat{d}_{\varepsilon}(\Hom_{\Gamma}(S,F,m,\delta,\sigma_{i}),\rho).\]
Since $F,m,\delta$ were arbitrary 
\[f.\dim_{\Sigma}(S,2\varepsilon,\rho)\leq\liminf_{(F,m,\delta)}\limsup_{i}\frac{1}{\dim V_{i}}\widehat{d}_{\varepsilon}(\Hom_{\Gamma}(S,F,m,\delta,\sigma_{i}),\rho),\]
and taking the supremum over $\varepsilon>0$ completes the proof.

\end{proof}

\section{Main Properties of $\dim_{\Sigma}(X,\Gamma)$}
The first property that we prove is that dimension is decreasing under surjective maps, as in the usual case of finite-dimensional vector spaces.

\begin{proposition}\label{P:surjection} Let $(Y,\Gamma,\Sigma=(\sigma_{i}\colon \Gamma\to \Isom(V_{i}))),(X,\Gamma,\Sigma)$ be two dimension triples. Suppose that there is a $\Gamma$-equivariant bounded linear map $T\colon Y\to X,$ with dense image. Then
\[\dim_{\Sigma}(X,\Gamma)\leq \dim_{\Sigma}(Y,\Gamma).\]
\[\underline{\dim}_{\Sigma}(X,\Gamma)\leq \underline{\dim}_{\Sigma}(Y,\Gamma).\]
\end{proposition}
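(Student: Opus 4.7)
The plan is to transfer almost-equivariant local maps from $X$ back to $Y$ by precomposition with $T$, using a matched pair of dynamically generating sequences. By Lemma~\ref{L:quotientgenerating} the quantities $\dim_\Sigma(X,\Gamma)$ and $\dim_\Sigma(Y,\Gamma)$ can be computed as $f.\dim_\Sigma(S,\rho)$ for any dynamically generating sequence $S$ and any product norm $\rho$, so it suffices to prove $f.\dim_\Sigma(S_X,\rho)\le f.\dim_\Sigma(S_Y,\rho)$ for one well-chosen pair of sequences (and the same with $\liminf$ in place of $\limsup$ for $\underline{\dim}_\Sigma$).

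Fix a bounded dynamically generating sequence $S_Y=(y_j)_{j=1}^\infty$ in $Y$, and set $x_j=T(y_j)$, $S_X=(x_j)_{j=1}^\infty$. The sequence $S_X$ is bounded since $T$ is, and $\Gamma$-equivariance of $T$ together with density of $T(Y)$ in $X$ gives
\[
\overline{\Span\{s x_j:s\in\Gamma,\,j\in\NN\}}=\overline{T(\Span\{s y_j:s\in\Gamma,\,j\in\NN\})}\supseteq\overline{T(Y)}=X,
\]
so $S_X$ is dynamically generating in $X$. Put $C=\max(1,\|T\|)$.

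The key observation is that precomposition with $T$ sends $\Hom_\Gamma(S_X,F,m,\delta,\sigma_i)$ into $\Hom_\Gamma(S_Y,F,m,\delta,\sigma_i)_C$. Indeed if $\phi$ lies in the first set, then $\|\phi\circ T\|\le C$, and for $s_1,\ldots,s_k\in F$ and $1\le j,k\le m$ equivariance of $T$ yields
\[
(\phi\circ T)(s_1\cdots s_k y_j)=\phi(s_1\cdots s_k x_j),\qquad \sigma_i(s_1)\cdots\sigma_i(s_k)(\phi\circ T)(y_j)=\sigma_i(s_1)\cdots\sigma_i(s_k)\phi(x_j),
\]
so the almost-equivariance defect of $\phi\circ T$ on $S_Y$ equals that of $\phi$ on $S_X$. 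Moreover $\alpha_{S_Y}(\phi\circ T)(j)=\chi_{\{j\le m\}}\phi(T(y_j))=\alpha_{S_X}(\phi)(j)$, whence
\[
\alpha_{S_X}(\Hom_\Gamma(S_X,F,m,\delta,\sigma_i))\subseteq \alpha_{S_Y}(\Hom_\Gamma(S_Y,F,m,\delta,\sigma_i)_C),
\]
and therefore $\widehat{d}_\varepsilon(\Hom_\Gamma(S_X,F,m,\delta,\sigma_i),\rho)\le \widehat{d}_\varepsilon(\Hom_\Gamma(S_Y,F,m,\delta,\sigma_i)_C,\rho)$ for every $\varepsilon>0$ and every $(F,m,\delta)$.

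Taking $\limsup_i$, then the limit over $(F,m,\delta)$, and finally $\sup_{\varepsilon>0}$ bounds $f.\dim_\Sigma(S_X,\rho)$ by the analogous quantity built from $\Hom_\Gamma(S_Y,F,m,\delta,\sigma_i)_C$. By the scaling remark following the definition of $f.\dim_\Sigma$, this latter quantity equals $f.\dim_\Sigma(S_Y,\rho)$, so Lemma~\ref{L:quotientgenerating} delivers $\dim_\Sigma(X,\Gamma)\le\dim_\Sigma(Y,\Gamma)$; the identical argument with $\liminf_i$ in place of $\limsup_i$ handles $\underline{\dim}_\Sigma$. There is no serious obstacle: the whole argument reduces to the observation that aligning generators via $x_j=T(y_j)$ makes $\alpha_{S_X}(\phi)$ literally equal to $\alpha_{S_Y}(\phi\circ T)$, after which the $C$-bound is absorbed by scaling.
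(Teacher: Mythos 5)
Your proposal is correct and follows essentially the same route as the paper: push a generating sequence of $Y$ forward through $T$ to get one for $X$, note that precomposition with $T$ carries $\Hom_{\Gamma}(S_X,F,m,\delta,\sigma_i)$ into $\Hom_{\Gamma}(S_Y,F,m,\delta,\sigma_i)_C$ with $\alpha_{S_Y}(\phi\circ T)=\alpha_{S_X}(\phi)$, and conclude by comparing $\widehat{d}_\varepsilon$. Your explicit handling of the norm constant $C$ via the scaling remark is a small point the paper glosses over, but the argument is the same.
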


\begin{proof} Let $S'=(y_{j})_{j=1}^{\infty}$ be a dynamically generating sequence for $Y.$ Let $S=(T(x_{j}))_{j=1}^{\infty},$ then $S$ is dynamically generating for $X.$ Then
\[\Hom_{\Gamma}(S,F,m,\delta,\sigma_{i})\circ T\subseteq \Hom_{\Gamma}(S',F,m,\delta,\sigma_{i})_{\|T\|},\]
and 
\[\alpha_{S'}(\phi\circ T)=\alpha_{S}(\phi),\]
so the proposition follows.

\end{proof}

We next show that dimension is subadditive under exact sequences. It turns out to be strong of a condition to require that dimension be additive under exact sequences. As noted in \cite{Gor} if $\dim_{\Sigma,l^{p}}$ is additive under exact sequences and
\[\dim_{\Sigma,l^{p}}(l^{p}(\Gamma)^{\oplus n},\Gamma)=n,\]
then we can write the Euler characteristic of a group as an alternating sum of dimensions of $l^{p}$ cohomology spaces. But torsion-free cocompact lattices in $SO(4,1)$ have positive Euler characteristic and their $l^{p}$ cohomology vanishes when $p$ is sufficiently large, so this would give a contradiction.

\begin{proposition}\label{P:dsumsubadd} Let $(V,\Gamma,\Sigma=(\sigma_{i}\colon \Gamma\to \Isom(V_{i})))$ be a dimension triple. Let $W\subseteq V$ be a closed $\Gamma$-invariant subspace. Then
\[\dim_{\Sigma}(V,\Gamma)\leq \dim_{\Sigma}(V/W,\Gamma)+\dim_{\Sigma}(W,\Gamma),\]
\[\underline{\dim}_{\Sigma}(V,\Gamma)\leq \underline{\dim}_{\Sigma}(V/W,\Gamma)+\dim_{\Sigma}(W,\Gamma),\]
\[\underline{\dim}_{\Sigma}(V^{\oplus n},\Gamma)\leq n\underline{\dim}_{\Sigma}(V,\Gamma).\]
\end{proposition}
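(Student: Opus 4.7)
The plan is to prove subadditivity of $\dim_\Sigma$ under the short exact sequence $0 \to W \to V \to V/W \to 0$ by splitting an almost-equivariant map on $V$ into its restriction to $W$ and an induced map on the quotient, controlling the approximating dimension for each piece separately.

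First I would build compatible dynamically generating sequences. Let $S_W = (w_j)_j$ be dynamically generating for $W$, let $S_{V/W} = (\bar v_j)_j$ be dynamically generating for $V/W$, and choose bounded lifts $v_j \in V$ with $\pi(v_j) = \bar v_j$ where $\pi\colon V \to V/W$ is the quotient. The interleaved sequence $S_V$ is then dynamically generating for $V$, and the $\alpha$-image in $l^\infty(\NN,V_i)$ splits naturally into coordinates indexed by the $w_j$'s and coordinates indexed by the $v_j$'s. Fix $\varepsilon>0$. Given $(F,m,\delta)$ and $\phi\in\Hom_{\Gamma}(S_V,F,m,\delta,\sigma_i)$, the restriction $\phi|_{X_{F,m}^W}$ lies in $\Hom_{\Gamma}(S_W,F,m,\delta,\sigma_i)$, so by the definition of $\dim_\Sigma(W,\Gamma)$ we may cover $\alpha_{S_W}(\phi|_W)$ within $\varepsilon$ by a subspace $Z_W^{(i)}\subseteq l^\infty(\NN,V_i)$ of dimension at most $(\dim_\Sigma(W,\Gamma)+\varepsilon)\dim V_i$.

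Next I would use Lemma~\ref{L:linearmap} to lift each element of $Z_W^{(i)}$ to an actual bounded linear map $\hat z\colon Y\to V_i$ (for a quotient-dimension setup $q\colon Y\to V$) that is close to $\phi$ on the $W$-generators in operator norm on a chosen finite-dimensional subspace. The difference $\phi-\hat z$ is then approximately zero on all $w$-generators up to index $m_W$ and remains approximately equivariant (with slightly worsened parameters), so it descends through $\pi$ to yield a map $\psi\in\Hom_{\Gamma}(S_{V/W},F',m',\delta',\sigma_i)$ with $\psi(\bar v_j)=(\phi-\hat z)(v_j)$. Applying the definition of $\dim_\Sigma(V/W,\Gamma)$, the collection of such $\psi$ is $\varepsilon$-covered by a subspace $Z_{V/W}^{(i)}$ of dimension at most $(\dim_\Sigma(V/W,\Gamma)+\varepsilon)\dim V_i$. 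Embedding $Z_W^{(i)}$ into the $w$-coordinates and $Z_{V/W}^{(i)}$ into the $v$-coordinates of $l^\infty(\NN,V_i)$ and taking their sum gives a subspace of dimension at most $(\dim_\Sigma(W,\Gamma)+\dim_\Sigma(V/W,\Gamma)+2\varepsilon)\dim V_i$ that $O(\varepsilon)$-covers $\alpha_{S_V}(\phi)$. Taking $\limsup_i$ yields the first inequality; for the second inequality the same construction combined with $\liminf_i(a_i+b_i)\le\limsup_i a_i+\liminf_i b_i$ allows us to take $\liminf$ only on the $V/W$ factor while keeping $\limsup$ on the $W$ factor. For the third inequality, I would use the dynamically generating family $\{e_k\otimes x_j:1\le k\le n,\,j\in\NN\}$ for $V^{\oplus n}$ where $(x_j)$ is dynamically generating for $V$. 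Any $\phi\in\Hom_{\Gamma}(S_{V^{\oplus n}},F,m,\delta,\sigma_i)$ splits as $\phi=\sum_{k=1}^{n}\phi_k\circ\pi_k$ with each $\phi_k\in\Hom_{\Gamma}(S_V,F,m,\delta,\sigma_i)$, and a dimension-$d_i$ subspace witnessing $\underline{\dim}_\Sigma(V,\Gamma)$ duplicated across the $n$ coordinate slots produces an $nd_i$-dimensional subspace approximating $\alpha_{S_{V^{\oplus n}}}(\phi)$ in an equivalent product norm; $\liminf_i$ then gives the bound.

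The main obstacle is the descent step: verifying that $\phi-\hat z$ vanishes approximately on all of $X_{F,m}^V\cap W$ (not merely on the distinguished generators $w_j$) and remains approximately equivariant with only a controlled loss in the parameters $(F',m',\delta')$. This uses that the lift $\hat z$ produced by Lemma~\ref{L:linearmap} is in fact a genuine bounded linear map with operator-norm closeness to $\phi$ on a finite-dimensional subspace, together with the finite-dimensional linear-algebra estimate appearing inside the proof of that lemma (the constant $D(F,m)$) to convert closeness on a basis into closeness on the full subspace. The remainder of the argument is error bookkeeping: choosing $\varepsilon$, then $\delta_W,\delta_{V/W}$, then $F,m,\delta$ so that each deterioration is absorbed into the next.
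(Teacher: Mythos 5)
Your architecture matches the paper's: interleave generating sequences for $W$ and $V/W$, restrict an almost-equivariant map on $V$ to the $W$-generators, subtract a lift of the covering data, and relate what is left to almost-equivariant maps on the quotient, so that the two covering subspaces add. Applying the linear lift $L_{i}$ of Lemma~\ref{L:linearmap} to the whole subspace $Z_{W}^{(i)}$ is a reasonable device (linearity of $L_{i}$ keeps the total covering dimension at $\dim Z_{W}^{(i)}+\dim Z_{V/W}^{(i)}$; the paper instead uses a set-theoretic section $\Theta$ of the restriction map $T\mapsto T|_{W_{F',m'}}$). The treatment of $V^{\oplus n}$ and the $\liminf$/$\limsup$ bookkeeping for the second inequality are fine.

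The gap is in the descent step, and the tools you cite do not close it. To invoke $\dim_{\Sigma}(V/W,\Gamma)$ you must place $\psi$ in $\Hom_{\Gamma}(S_{V/W},F_{1},m_{1},\delta_{1},\sigma_{i})_{C}$ for a constant $C$ independent of $i$ and of the parameters; in particular $\psi$ must be bounded by $C$ with respect to the \emph{quotient} norm on $(V/W)_{F_{1},m_{1}}$. That norm is $\|\pi(v)\|_{V/W}=\inf_{w\in W}\|v-w\|$ with $w$ ranging over all of $W$, not over $W\cap X^{V}_{F,m}$. So even after the constant $D(F,m)$ upgrades ``small on the translates of the $w_{j}$'' to ``small on $W\cap X^{V}_{F,m}$,'' the assignment $\pi(v)\mapsto(\phi-\hat z)(v)$ can still have uncontrolled norm: there may be $v\in X^{V}_{F,m}$ whose image in $V/W$ is tiny although no small representative of it lies in $X^{V}_{F,m}$, and finite-dimensional linear algebra cannot exclude this uniformly in $i$ and in $(F,m,\delta)$. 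This is exactly the content of Lemma~\ref{L:section}, which the paper proves by an ultraproduct compactness argument using the density of $\bigcup_{E,l}\bigl(W\cap V_{E,l}\bigr)$ in $W$: for suitably enlarged $(F',m',\delta')$ it produces a map on the quotient of norm at most a fixed constant agreeing with $\phi-\hat z$ on the prescribed lifts up to $\delta_{1}$. Without Lemma~\ref{L:section} (or an equivalent compactness argument) the covering bound for the $V/W$-coordinates is not justified, so you should route the descent through that lemma rather than through the estimate $D(F,m)$ internal to Lemma~\ref{L:linearmap}.
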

\begin{proof}

	Let $S_{2}=(w_{j})_{j=1}^{\infty}$ be a dynamically generating sequence for $W,$ and let $S_{1}=(a_{j})_{j=1}^{\infty}$ be a dynamically generating sequence for $V/W.$ Let $x_{j}\in V,$ be such that $x_{j}+W=a_{j},$ and $\|x_{j}\|\leq 2\|a_{j}\|.$ Let $S$ be the sequence
\[x_{1},w_{1},x_{2},w_{2},\cdots.\]
	We shall use the product norm on $l^{\infty}(\NN)$ given by
\[\rho_{1}(f)=\sum_{j=1}^{\infty}\frac{1}{2^{j}}|f(j)|,\]
\[\rho_{2}(f)=\sum_{j=1}^{\infty}\frac{1}{2^{j}}|f(2j)|+\sum_{j=1}^{\infty}\frac{1}{2^{j}}|f(2j-1)|.\]

	Let $\varepsilon>0,$ and choose $m$ such that $2^{-m}<\varepsilon.$ Let $e\in F_{1}\subseteq \Gamma$ be finite,$m\leq m_{1}\in \NN,$ and $\delta_{1}>0.$ Let $\eta>0$ to be determined later. By Lemma $\ref{L:section},$ we can find a $\delta_{1}>\delta>0,$ a $F_{1}\subseteq E\subseteq \Gamma$ finite, and a $m\leq k\in \NN,$ so that if $X$ is a Banach space, and 
\[T\colon V_{E,2k}\to X\]
has $\|T\|\leq 2,$ and 
\[\|T\big|_{W\cap V_{E,2k}}\|\leq \delta,\]
then there is a $\phi\colon (V/W)_{F_{1},m_{1}}\to X$ with $\|\phi\|\leq 3,$ and
\[\|\phi(s_{1}\cdots s_{k}a_{j})-T(s_{1}\cdots s_{k}x_{j})\|<\delta_{1},\]
for all $1\leq j,k\leq m_{1},$ and $s_{1},\cdots,s_{k}\in F_{1}.$

	By finite-dimensionality, we can find a finite set $F'\supseteq E,m'\geq 2k,$ and a $0<\delta'<\delta_{1},$ so that if $T\colon V_{F',m'}\to X,$ satisfies
\[\|T(s_{1}\cdots s_{k}x_{j})\|<\delta'\]
for all $1\leq j,k\leq m',$ and $s_{1},\cdots,s_{k}\in F',$ then
\[\|T\big|_{W\cap V_{E,2k}}\|\leq \delta.\]

	Define
\[R\colon \Hom_{\Gamma}(S,F',2m',\delta',\sigma_{i})\to \Hom_{\Gamma}(S_{2},F',m',\delta',\sigma_{i})\]
by
\[R(T)=T\big|_{W_{F',m'}}.\]

	Find
\[\Theta\colon \im(R)\to \Hom_{\Gamma}(S,F',2m',\delta',\sigma_{i})\]
so that  $R\circ \Theta=\id.$

	Then
\[(T-\theta(R(T))(s_{1}\cdots s_{k}w_{j})=0,\]
for all $1\leq j,k\leq m',$ and $s_{1},\cdots,s_{k}\in F'.$ Thus by assumption, we can find a 
\[\phi\colon (V/W)_{F_{1},m_{1}}\to V_{i},\]
so that $\|\phi\|\leq 3,$ and
\[\|\phi(s_{1}\cdots s_{k}a_{j})-(T-\theta(R(T)))(s_{1}\cdots s_{k}x_{j})\|<\delta_{1},\]
for all $1\leq j,k\leq m_{1},$  $s_{1},\cdots,s_{k}\in F_{1},$
in particular,
\[\|\phi(a_{j})-(T-\theta(R(T)))(x_{j})\|<\delta_{1},\]
for $1\leq j\leq m.$

	Thus whenever $1\leq j,k\leq m_{1},$  $s_{1},\cdots,s_{k}\in F_{1},$
\[\|\phi(s_{1}\cdots s_{k}a_{j})-\sigma_{i}(s_{1})\cdots \sigma_{i}(s_{k})\phi(a_{j})\|\leq 2\delta_{1}+2\delta'<4\delta_{1}.\]

	Now suppose that 
\[\alpha_{S_{2}}(\Hom_{\Gamma}(S_{2},F_{1},m_{1},\delta_{1},\sigma_{i}))\subseteq_{\varepsilon,\rho_{1,V_{i}}}G,\]
\[\alpha_{S_{1}}(\Hom_{\Gamma}(S_{1},F,m,4\delta_{1},\sigma_{i})_{3})\subseteq_{\varepsilon,\rho_{1,V_{i}}}F.\]
Let $E\subseteq l^{\infty}(\NN,V_{i})$ be the subspace consisting of all $h$ so that there are $f\in F,g\in G$ so that 
\[h(2k)=g(k),h(2k-1)=f(k).\]
	Then $\dim(E)=\dim(F)+\dim(G).$  It easy to see that 
\[\alpha_{S}(\Hom_{\Gamma}(S,F',m',\delta',\sigma_{i}))\subseteq_{3\varepsilon+\delta_{1},\rho_{2,V_{i}}}E.\]

	So if $\delta_{1}<\varepsilon,$ we find that 
\[\alpha_{S}(\Hom_{\Gamma}(S,F_{1},m_{1},\delta',\sigma_{i}))\subseteq_{3\varepsilon}E.\]
	From this the first two inequalities follow.

	The last inequality is easier and its proof will only be sketched. Let $S=(x_{j})_{j=1}^{\infty}$ be a dynamically generating sequence for $X,$ and $y_{j}=x_{q}\otimes e_{r}$ if $j=nq+r,$ with $1\leq r\leq n,$ and $x_{q}\otimes e_{r}$ is the element of $X^{\oplus n}$ which is zero in all coordinates except for the $r^{th},$ where it is $x_{q}.$ If $F\subseteq \Gamma$ is finite $m\in \NN,\delta>0,$ then
\[\Hom_{\Gamma}(S,F,nm,\delta,\sigma_{i})\subseteq \Hom_{\Gamma}(S,F,m,\delta,\sigma_{i})^{\oplus n}.\]
The rest of the proof proceeds as above.
\end{proof}

	We note here that subadditivity is not true for \emph{weakly} exact sequences, that is sequences
\[\begin{CD} 0 @>>> X@>>> Y @>>> Z @>>> 0\end{CD},\]
where $X\to Y$ is injective, $\overline{\im(X)}=\ker(Y\to Z),$ and the image of $Y$ is dense in $Z.$ In fact, using $\FF_{n}$ for the free group on $n$ letters $a_{1},\cdots, a_{n},$ it is known that the map
\[\parital\colon l^{1}(\FF_{n})^{\oplus n}\to l^{1}(\FF_{n}),\]
given by
\[\partial (f_{1},\cdots,f_{n})(x)=\sum_{j=1}^{n}f_{j}(x)-\sum_{j=1}^{n}f_{j}(xa_{j}^{-1})\]
has dense image and is injective. We will show in our follow-up paper that
\[\underline{\dim}_{\Sigma,l^{1}}(l^{1}(\FF_{n})^{\oplus n},\FF_{n})=\dim_{\Sigma,l^{1}}(l^{1}(\FF_{n})^{\oplus n},\FF_{n})=n,\]
\[\underline{\dim}_{\Sigma,l^{1}}(l^{1}(\FF_{n}),\FF_{n})=\dim_{\Sigma,l^{1}}(l^{1}(\FF_{n}),\FF_{n})=1,\]
this gives a counterexample to subadditivity under weakly exact sequences. This also gives a counterexample to monotonicity under injective maps, though once should note in this case that the map defined above does not have closed image.

	For $2\leq p\leq \infty,$ we  have a lower bound for direct sums, whose proof requires a few more lemmas.

\begin{lemma}[ \cite{Voi}, Lemma 8.5]\label{L:Voic} Let $H_{1},H_{2}$ be Hilbert spaces and let $H=H_{1}\oplus H_{2}$ and let $\Omega_{j}\subseteq H_{j}$  and suppose $C_{1},C_{2}>0$ are such that $C_{1}\leq \|\xi\|\leq C_{2},$ for all $\xi\in \Omega_{j}.$ If $0<\delta<C_{1},$ then 
\[d_{C_{2}^{-1}\delta}(\Omega_{1}\oplus 0\cup 0\oplus \Omega_{2})\geq d_{C_{1}^{-1}\sqrt{5\delta}}(\Omega_{1})+d_{C_{1}^{-1}\sqrt{5\delta}}(\Omega_{2}).\]
\end{lemma}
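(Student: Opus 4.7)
The plan is to produce, from any subspace $V\subseteq H$ that $C_2^{-1}\delta$-contains $\Omega_1\oplus 0\cup 0\oplus\Omega_2$, two subspaces $W_j\subseteq H_j$ with $\dim W_1+\dim W_2\leq\dim V$ such that each $W_j$ approximately contains $\Omega_j$ in the corresponding factor. The natural tool is the \emph{canonical (CS) decomposition} of $V$ relative to $H=H_1\oplus H_2$: applying singular value decomposition to $P_1|_V\colon V\to H_1$ produces an orthonormal basis $v_1,\ldots,v_d$ of $V$, orthonormal systems $(u_j)\subseteq H_1$ and $(w_j)\subseteq H_2$, and numbers $\sigma_j,\tau_j\geq 0$ with $\sigma_j^2+\tau_j^2=1$ such that $v_j=\sigma_j u_j+\tau_j w_j$. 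Orthogonality of the $w_j$ is forced by $\langle v_j,v_k\rangle=\delta_{jk}$ together with the relation $\sigma_j^2+\tau_j^2=1$.

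Fix a threshold $\alpha\in(0,1)$ and set $J_1=\{j:\sigma_j\geq\alpha\}$, $J_2=J_1^c$, $W_1=\Span\{u_j:j\in J_1\}$, $W_2=\Span\{w_j:j\in J_2\}$, so that $\dim W_1+\dim W_2=|J_1|+|J_2|=\dim V$. For $\xi\in\Omega_1$, choose $v=\sum c_jv_j\in V$ with $\|v-\xi\oplus 0\|<\varepsilon:=C_2^{-1}\delta$. Then $\|P_2(v)\|^2=\sum|c_j|^2\tau_j^2<\varepsilon^2$, and since $\tau_j^2>1-\alpha^2$ on $J_2$ we obtain $\sum_{j\in J_2}|c_j|^2<\varepsilon^2/(1-\alpha^2)$. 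Approximating $\xi$ in $W_1$ by $\sum_{j\in J_1}c_j\sigma_j u_j$, the error is controlled by
\[\|\xi-P_1(v)\|+\Big\|\sum_{j\in J_2}c_j\sigma_j u_j\Big\|<\varepsilon+\frac{\alpha\varepsilon}{\sqrt{1-\alpha^2}}.\]
The symmetric estimate for $\eta\in\Omega_2$, using $\|P_1(v)\|^2=\sum|c_j|^2\sigma_j^2<\varepsilon^2$ together with $\sigma_j\geq\alpha$ on $J_1$, shows that $W_2$ $\varepsilon(1+\sqrt{1-\alpha^2}/\alpha)$-contains $\Omega_2$.

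The final step is to choose $\alpha$ so that both error bounds meet the stated value $C_1^{-1}\sqrt{5\delta}$. Here the hypothesis $C_1\leq\|\xi\|,\|\eta\|\leq C_2$ enters: for an approximant $v$ of $\xi\in\Omega_1$ one has $\|v\|\geq C_1-\varepsilon$, so the ``tilt'' $\|P_2(v)\|/\|v\|<\varepsilon/(C_1-\varepsilon)$ is of order $\delta/C_1$, and symmetrically on the $\Omega_2$ side. Taking $\alpha$ tailored to $\delta/C_1$ (with $1-\alpha^2$ of order $\delta/C_1$) converts the linear-in-$\varepsilon$ errors above into bounds of order $\sqrt{\delta/C_1}$, matching the claimed $C_1^{-1}\sqrt{5\delta}$ up to constants; the hypothesis $\delta<C_1$ is exactly what makes this choice of $\alpha$ legitimate. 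I expect the main obstacle to be this last bookkeeping step---marrying the CS threshold to both norm bounds $C_1,C_2$ to extract the precise coefficient $\sqrt{5}$---rather than the structural argument, which is the CS decomposition plus threshold split that drives subadditivity of $l^2$-dimension over orthogonal direct sums.
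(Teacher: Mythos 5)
Your structural argument is essentially the paper's proof in different clothing: the orthonormal basis $v_j=\sigma_j u_j+\tau_j w_j$ is exactly an eigenbasis of $T=QP_1Q\big|_V$ (with eigenvalues $\sigma_j^2$), and your threshold split of the index set is the paper's spectral decomposition $V=E([0,1/2])V\oplus E((1/2,1])V$ of that same operator; the only cosmetic difference is that you project the two pieces into $H_1$ and $H_2$ while the paper keeps them inside $V$, which changes nothing in the dimension count. The two error estimates $\varepsilon\bigl(1+\alpha/\sqrt{1-\alpha^2}\bigr)$ for $\Omega_1$ and $\varepsilon\bigl(1+\sqrt{1-\alpha^2}/\alpha\bigr)$ for $\Omega_2$ are correct.

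The final paragraph, however, proposes the wrong tuning, and as written it would not close the argument. Your two error bounds move in \emph{opposite} directions as $\alpha$ varies: sending $1-\alpha^2$ to something of order $\delta/C_1$ (i.e.\ $\alpha\to 1$) makes the $\Omega_1$-error blow up like $\varepsilon/\sqrt{1-\alpha^2}$ rather than shrink, so there is no choice of $\alpha$ depending on $\delta$ that improves both bounds simultaneously. Moreover the constants $C_1,C_2$ cannot be absorbed this way: the stated inequality is not scale-invariant, and the intended reduction (which the paper invokes with ``by scaling'') is to normalize so that $C_1=C_2=1$, i.e.\ to treat $\Omega_j$ as sets of unit vectors. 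Once that normalization is made, the correct choice is the \emph{balanced} fixed threshold $\alpha^2=1/2$ (precisely the paper's spectral cut at $1/2$), which gives error $2\varepsilon=2\delta$ on both sides, and $2\delta\leq\sqrt{5\delta}$ holds exactly in the regime $\delta<1=C_1$ guaranteed by the hypothesis. So your machinery does prove the lemma, but via the fixed cut at $1/2$ after normalization, not via a $\delta$-dependent threshold.
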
\begin{proof} By replacing $\Omega_{j}$ with 
\[\left\{\frac{\xi}{\|\xi\|}:\xi\in \Omega_{j}\right\}\]
 we may assume $C_{1}=C_{2}=1.$  Let $P_{i}$ be the projection onto each $H_{i},$ and set $\Omega=(\Omega_{1}\oplus 0)\cup (0\oplus \Omega_{2}).$ Suppose that $V$ is a subspace such that $\Omega\subseteq_{\delta} V,$ and let $Q$ be the projection onto $V$ and $T=QP_{1}Q\big|_{V}.$  Define
\[\Omega_{1}'=Q(\Omega_{1}\oplus 0),\Omega_{2}'=Q(0\oplus \Omega_{2}).\]
 For $\xi\in \Omega$ we have 
\[\|(1-Q)\xi\|\leq \delta\]
thus for $\xi\in \Omega_{1}\oplus \{0\}$
\[\ip{TQ\xi,Q\xi}=\ip{QP_{1}Q\xi,Q\xi}=\|P_{1}Q\xi\|^{2}\geq (\|\xi\|-\|P_{1}(1-Q)\xi\|)^{2}\geq (1-\delta)^{2}.\]

	So if $T=\int_{[0,1]}t\,dE(t)$ we have  with $\eta=Q\xi$
\[(\sqrt{1-\delta^{2}}-\delta)^{2}\leq \left\ip{\left(1-\frac{1}{2}E\left([0,1/2]\right)\right)\eta,\eta\right}\leq 1-\frac{1}{2}\left\|E([0,1/2])\eta\right\|^{2}.\]

	Thus
\[\|E([0,1/2])\eta\|^{2}\leq 2(1-(1-\delta)^{2})\leq 4\delta\]
i.e.
\[\|\eta-E((1/2,1])\eta\|^{2}\leq 4\delta.\]
Thus
\[\Omega_{1}'\subseteq_{2\sqrt{\delta}}E((1/2,1])V.\]
Similarly, because $QP_{2}Q\big|_{V}=1-T$ we have
\[\Omega_{2}'\subseteq_{2\sqrt{\delta}}E([0,1/2])V.\]

	For any projection $P'$ and any $x\in H$ we have $\|x-P'x\|^{2}=\|x\|^{2}-\|P'x\|^{2}.$ So for all $\xi\in \Omega_{1}\oplus 0$ we have since,  $QE((1/2,1])=E((1/2,1]) $ (and $E((1/2,1]Q=E((1/2,1])$ by taking adjoints), that
\[\|\xi-E((1/2,1])Q\xi\|^{2}=\|\xi-E((1/2,1])\xi\|^{2}=\|\xi\|^{2}-\|E((1/2,1])\xi\|^{2}=\]
\[\|\xi\|^{2}-\|Q\xi\|^{2}+\|Q\xi\|^{2}-\|E((1/2,1])\xi\|^{2}=\]
\[\|\xi-Q\xi\|^{2}+\|Q\xi-E((1/2,1])Q\xi\|^{2}\leq \delta^{2}+4\delta<5\delta.\]

	Thus with a similar proof for $\Omega_{2}$ we have
\[\Omega_{1}\oplus 0\subseteq_{\sqrt{5\delta}}E((1/2,1])V\]
\[0\oplus \Omega_{2}\subseteq_{\sqrt{5\delta}}E([0,1/2))V\]
since
\[V=E([0,1/2])V\oplus E\left((1/2,1]\right)V\]
the desired claim follows.

\end{proof}

\begin{lemma}\label{L:finite sequence} Let $(X,\Gamma,\Sigma)$ be a dimension triple.  Let $S$ be a dynamically generating sequence in $X,$ and $\rho$ a  product norm such that $\rho(f)\leq \rho(g)$ if $|f|\leq |g|$. Set
\[\rho^{(N)}(f)=\rho(\chi_{j\leq N}f).\]

 Then
\[f.\dim_{\Sigma}(S,\rho)=\lim_{N\to \infty}f.\dim_{\Sigma}(S,\rho^{(N)}),\]
\[\underline{f.\dim}_{\Sigma}(S,\rho)=\lim_{N\to \infty}\underline{f.\dim}_{\Sigma}(S,\rho^{(N)}).\]
\end{lemma}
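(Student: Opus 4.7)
The plan is to prove the two inequalities separately and note that monotonicity in $N$ makes the limit exist. First, because the hypothesis forces $\rho(\chi_{\le N}f)\le \rho(f)$, we have $\rho^{(N)}\le \rho$, and more generally $\rho^{(N)}\le \rho^{(N')}$ for $N\le N'$. So any subspace $\varepsilon$-containing $\alpha_S(\Hom_\Gamma(S,F,m,\delta,\sigma_i))$ under $\rho_{V_i}$ also $\varepsilon$-contains it under $\rho^{(N)}_{V_i}$; this immediately gives $f.\dim_\Sigma(S,\rho^{(N)})\le f.\dim_\Sigma(S,\rho)$ and shows $N\mapsto f.\dim_\Sigma(S,\rho^{(N)})$ is nondecreasing, so $\lim_{N\to\infty}f.\dim_\Sigma(S,\rho^{(N)})$ exists. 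The same goes for $\underline{f.\dim}_\Sigma$, so only the reverse inequality requires work.

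For the other direction, the decisive input is property~2 of a product norm: the topology induced by $\rho$ on the unit ball of $l^\infty(\NN)$ agrees with the product topology. A neighborhood base of $0$ in the latter consists of sets of functions vanishing on a fixed finite prefix, so the property yields a uniform tail bound — for each $\varepsilon>0$ there is $N_0$ such that $\rho(f)<\varepsilon$ whenever $\|f\|_\infty\le K$ and $\supp f\subseteq\{j>N_0\}$, where $K$ is any constant we fix in advance. Choose $K$ to be a uniform bound on $\|\alpha_S(T)(j)\|$ valid for all $T\in\Hom_\Gamma(S,F,m,\delta,\sigma_i)$ (the boundedness of $S$ and the unit-norm condition on $T$ supply such a $K$).

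Now, given a subspace $W\subseteq l^\infty(\NN,V_i)$ which $\varepsilon$-contains $\alpha_S(\Hom_\Gamma(S,F,m,\delta,\sigma_i))$ under $\rho^{(N)}_{V_i}$, I replace $W$ by $\chi_{\le N}W$ without increasing its dimension or disturbing the $\varepsilon$-covering under $\rho^{(N)}_{V_i}$. Then for any $T$ and a covering $w\in\chi_{\le N}W$, a triangle inequality splits $\rho_{V_i}(\alpha_S(T)-w)$ into a head (controlled by $\rho^{(N)}_{V_i}(\alpha_S(T)-w)<\varepsilon$) plus a tail $\rho_{V_i}((1-\chi_{\le N})\alpha_S(T))$; the latter is a function bounded by $K$ and supported past $N\ge N_0$, hence at most $\varepsilon$. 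This gives
\[\widehat{d}_{2\varepsilon}(\Hom_\Gamma(S,F,m,\delta,\sigma_i),\rho)\le \widehat{d}_{\varepsilon}(\Hom_\Gamma(S,F,m,\delta,\sigma_i),\rho^{(N)}).\]
Taking $\limsup_i$, then the appropriate limit over $(F,m,\delta)$ (using Lemma~\ref{L:dimlimit}), and finally $\sup_{\varepsilon>0}$ yields $f.\dim_\Sigma(S,\rho)\le \lim_{N\to\infty}f.\dim_\Sigma(S,\rho^{(N)})$. The $\underline{f.\dim}_\Sigma$ version is identical after replacing every $\limsup_i$ by $\liminf_i$.

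The main (modest) obstacle is simply ensuring the uniformity of the tail estimate, i.e., extracting a single $N_0$ that works for all $T\in\Hom_\Gamma$ and all $i$. This is handled by fixing the uniform bound $K$ from the dynamically generating sequence before invoking property~2; everything else is a clean triangle-inequality/truncation argument, and the truncation of $W$ to $\chi_{\le N}W$ is the only place a reader might briefly worry about losing dimension, which does not happen.
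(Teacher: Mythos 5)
Your proof is correct and follows essentially the same route as the paper's: the easy inequality from $\rho^{(N)}\leq\rho$, and the reverse via the uniform tail bound supplied by property 2 of a product norm together with the uniform bound $K$ on $\|\alpha_{S}(T)(j)\|$ coming from the boundedness of $S$ and $\|T\|\leq 1$. Your explicit truncation of the approximating subspace to $\chi_{\leq N}W$ is a slightly more careful rendering of a step the paper leaves implicit, but the argument is the same.
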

\begin{proof} Let $\Sigma=(\sigma_{i}\colon \Gamma\to \Isom(V_{i})).$ Let $S=(x_{j})_{j=1}^{\infty},$ $C=\sup_{j}\|x_{j}\|.$

	Since $\rho^{(N)}\leq \rho,$ for any $\varepsilon>0$
\[f.\dim_{\Sigma}(S,\varepsilon,\rho^{(N)})\leq f.\dim_{\Sigma}(S,\varepsilon,\rho)\leq f.\dim_{\Sigma}(S,\rho),\]
thus
\[\limsup_{n\to \infty}f.\dim_{\Sigma}(S,\rho^{(n)})\leq f.\dim_{\Sigma}(S,\rho).\]

	For the opposite inequality, fix $\varepsilon>0.$ and choose $N$ such that $\rho(f)<\varepsilon$ if $f\in l^{\infty}(\NN,V_{i})$ is supported on $\{k:k\geq N\}$ and $\|f\|_{\infty}\leq C.$ Thus for $T\in B(X,V_{i}),$ and $f\in l^{\infty}(\NN,V_{i})$ with $\|T\|\leq 1,$ and $n\geq N$ we have
\[|\rho_{V_{i}}(\alpha_{S}(T)-\chi_{\{j\leq N\}})-(\rho^{(n)}_{V_{i}}(\alpha_{S}(T)-\chi_{\{j\leq N\}}f))|\leq |\rho_{V_{i}}(\chi_{\{k>n\}}\alpha_{S}(T))|\leq \varepsilon.\]
Thus for $n\geq N,$ 
\[f.\dim_{\Sigma}(S,2\varepsilon,\rho)\leq f.\dim_{\Sigma}(S,\varepsilon,\rho^{(n)})\leq  f.\dim_{\Sigma}(S,\rho^{(n)}),\]
so
\[f.\dim_{\Sigma}(S,2\varepsilon,\rho)\leq \liminf_{n\to \infty} f.\dim_{\Sigma}(S,\rho^{(n)}).\]

\end{proof}
For the next lemma, we recall the notion of the volume ratio of a finite-dimensional Banach space. Let $X$ be an $n$-dimensional real Banach space, which we will identify with $\RR^{n}$ with a certain norm. By an \emph{ellipsoid} in $\RR^{n}$ we mean a set which is the unit ball for some Hilbert space norm on $\RR^{n}.$ Let $B\subseteq \RR^{n}$ be the unit ball of $X.$ We define the volume ratio of $B,$ denoted $\vr(B)$ by
\[\vr(B)=\inf\left(\frac{\vol(B)}{\vol(D)}\right)^{1/n},\]
where the infimum runs over all ellipsoids $D\subseteq B.$ It is know that for any unit ball  $B$ of a Banach space norm on $\RR^{n},$ there is  an ellipsoid $D^{\max{}}$ such that $D^{\max{}}\subseteq B,$ and $D^{\max{}}$ has the largest volume of all such ellipsoids. So we have
\[\vr(B)=\left(\frac{\vol(B)}{\vol(D^{\max{}})}\right)^{1/n}.\]
The main property we will need to know about volume ratio is the following theorem.

\begin{theorem}[Theorem 6.1,\cite{Pis}] Let $B\subseteq \RR^{n}$ be the unit ball for a norm $\|\cdot\|$ on $\RR^{n}.$ Let  $D\subseteq B$ be an ellipsoid. Set
\[A=\left(\frac{\vol(B)}{\vol(D)}\right)^{1/n}.\]
Let $|\cdot|$ be a norm such that $D$ is the unit ball of $(\RR^{n},|\cdot|),$ in particular $\|\cdot\|\leq |\cdot|.$ Then for all $k=1,\cdots, n-1$ there is a subspace $F\subseteq \RR^{n}$ such that $\dim F=k$ and for every $x\in F$
\begin{equation}\label{E:equivalentnorm}
|x|\leq (4\pi A)^{\frac{n}{n-k}}\|x\|.
\end{equation}
Further if we let $G_{nk}$ be the Grassmanian manifold of $k$-dimensional subspaces of $\RR^{n},$ then
\[\PP(\{F\in G_{nk}:\mbox{ for all $x\in F$, equation $(\ref{E:equivalentnorm})$ holds}\})>1-2^{-n},\]
for the unique $O(n)$-invariant probability measure on $G_{nk}.$ 
\end{theorem}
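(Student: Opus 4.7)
The plan is to apply a linear normalization so that $D$ becomes the standard Euclidean ball, then combine the resulting integral identity for the volume ratio with a $\delta$-net argument on a uniformly random $k$-dimensional subspace. The hypotheses, the conclusion, and the $O(n)$-invariant measure on $G_{nk}$ are all equivariant under $GL(n,\RR)$, so by applying a linear change of coordinates I may assume $D=B_{2}^{n}$ (the standard Euclidean ball) and $|\cdot|=\|\cdot\|_{2}$; then $\|\cdot\|\leq\|\cdot\|_{2}$ and $\vol(B)/\vol(B_{2}^{n})=A^{n}$.

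Next, integrating in polar coordinates gives $\vol(B)=\tfrac{1}{n}\int_{S^{n-1}}\|u\|^{-n}\,d\sigma(u)$, and dividing by $\vol(B_{2}^{n})=\sigma(S^{n-1})/n$ yields the key identity
\[\int_{S^{n-1}}\|u\|^{-n}\,d\mu(u)=A^{n},\]
where $\mu$ is the $O(n)$-invariant probability measure on $S^{n-1}$. Markov's inequality then gives $\mu(\{u:\|u\|<s\})\leq (sA)^{n}$ for every $s>0$. Now fix a reference $k$-plane $F_{0}\subseteq\RR^{n}$ and write $F=UF_{0}$ with $U$ Haar-distributed on $O(n)$; this makes $F$ uniform on $G_{nk}$, and for any fixed $z\in F_{0}\cap S^{n-1}$ the vector $Uz$ is uniformly distributed on $S^{n-1}$. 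Choose a $\delta$-net $\mathcal{N}\subseteq F_{0}\cap S^{n-1}$ of cardinality at most $(1+2/\delta)^{k}$ by standard volume packing. A union bound gives
\[\PP\bigl(\exists\,x\in\mathcal{N}:\|Ux\|<s\bigr)\leq\bigl(1+2/\delta\bigr)^{k}(sA)^{n}.\]
On the complementary event, every $y=Uz\in F\cap S^{n-1}$ lies within $|z-x|<\delta$ of some $x\in\mathcal{N}$, and since $U$ is a Euclidean isometry and $\|\cdot\|\leq|\cdot|$, we get $\|y\|\geq\|Ux\|-\|U(z-x)\|\geq s-\delta$, hence $|\cdot|\leq(s-\delta)^{-1}\|\cdot\|$ on all of $F$.

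Finally I would calibrate constants. Choosing $\delta=s/2$ makes $|\cdot|\leq(2/s)\|\cdot\|$ on the good event, so setting $2/s=(4\pi A)^{n/(n-k)}$ gives the desired comparison; it remains to verify the probability bound. With this choice, $s^{n-k}=2^{-(n-k)}(4\pi A)^{-n}$, so the failure probability is at most $(1+4/s)^{k}(sA)^{n}$, which after substitution reduces to a constant-power expression bounded by $2^{-n}$ precisely because $4\pi$ exceeds the combinatorial factor produced by the net (for instance the elementary inequality $12<4\pi$ suffices for a $(3/\delta)^{k}$ net bound). The main obstacle is this numerical bookkeeping: the generous constant $4\pi$ is exactly what makes the net estimate line up with the Markov bound to give the sharp exponent $n/(n-k)$ together with the probabilistic conclusion, and a less generous constant would force a finer net or a sharper concentration argument.
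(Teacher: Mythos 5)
The paper does not actually prove this statement; it is imported wholesale from Pisier's book (Theorem 6.1 of \cite{Pis}) and only the existence part is used later, so there is no internal proof to compare against. Your argument is correct and is essentially the standard one: after normalizing $D$ to the Euclidean ball, the polar-coordinate identity $\int_{S^{n-1}}\|u\|^{-n}\,d\mu(u)=A^{n}$ together with Markov's inequality controls $\mu(\{u:\|u\|<s\})$, and the net-plus-union-bound step upgrades this to a uniform lower bound on $\|\cdot\|$ over a Haar-random $k$-plane; the bookkeeping does close, since with $\delta=s/2$ and $2/s=(4\pi A)^{n/(n-k)}$ the failure probability is at most $(1+4/s)^{k}(sA)^{n}\leq 3^{k}(2\pi)^{-n}<2^{-n}$ (using $A\geq 1$, which holds because $D\subseteq B$). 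Pisier's own covering step is phrased differently --- he integrates $\|u\|^{-n}$ over $S^{n-1}\cap F$ against the invariant measure on $G_{nk}$ and then uses a spherical-cap lower bound around a single bad direction --- but this is the same estimate as your net argument up to constants. The one imprecision is your opening claim that the $O(n)$-invariant measure on $G_{nk}$ is $GL(n,\RR)$-equivariant: it is not, so after the linear change of coordinates sending $D$ to $B_{2}^{n}$ the probabilistic conclusion you obtain is with respect to the invariant measure attached to the Euclidean structure determined by $D$ rather than the ambient one. That is the intended reading of the theorem (and the existence statement, which is all the paper uses, is unaffected), but you should weaken the equivariance claim to cover only the norms, the ellipsoid, and the conclusion, and say explicitly which orthogonal group the Grassmannian measure is invariant under.
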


What we will actually use is the following corollary.

\begin{cor}\label{C:equivquo} Let $B\subseteq \RR^{n}$ be the unit ball for a norm $\|\cdot\|$ on $\RR^{n},$ and let $B^{o}$ be its polar. Let $D\subseteq B^{o}$ be an ellipsiod. Set
\[A=\left(\frac{\vol(B^{o})}{\vol(D^{o})}\right)^{1/n}.\]
Let $|\cdot|$ be a norm such that $D$ is the unit ball of $(\RR^{n},|\cdot|),$ in particular $|\cdot|\leq \|\cdot\| .$ Then for all $k=1,\cdots, n-1$ there is a subspace $F\subseteq \RR^{n}$ such that $\dim F=k$ and for every $x\in \RR^{n}/F^{\perp}$
\begin{equation}\label{E:equivalentnorm2}
\|x\|_{(\RR^{n}/F^{\perp},\|\cdot\|)}\leq (4\pi A)^{\frac{n}{n-k}}|x|_{(\RR^{n}/F^{\perp},|\cdot|)},
\end{equation}
where we use $\|\cdot\|_{(\RR^{n}/F^{\perp},\|\cdot\|)}$ for the quotient norm induced by $\|\cdot\|$ and similarly for $|\cdot|.$ Further,
\[\PP(\{F\in G_{nk}\colon \mbox{ for all $x\in F,$ equation $(\ref{E:equivalentnorm2})$ holds}\})>1-2^{-n}.\]
\end{cor}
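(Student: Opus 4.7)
The plan is to apply the preceding theorem (Theorem~6.1 of \cite{Pis}) to the dual normed space $(\RR^{n},\|\cdot\|^{*})$, whose unit ball is $B^{o}$, and then transfer the resulting inequality on a subspace $F$ back to an inequality on the quotient $\RR^{n}/F^{\perp}$ via the Banach-space duality $(F,\nu|_{F})^{*}\cong (\RR^{n}/F^{\perp},(\nu^{*})_{\mathrm{quot}})$.

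First I would observe that $D\subseteq B^{o}$ is exactly the dual-norm inequality $\|\cdot\|^{*}\leq |\cdot|$ on $\RR^{n}$, so the triple $(\|\cdot\|^{*},|\cdot|,D\subseteq B^{o})$ satisfies the hypotheses of Theorem~6.1 on $\RR^{n}$ with the same volume-ratio constant $A=(\vol(B^{o})/\vol(D))^{1/n}$. Applying that theorem at the same integer $k$ produces a subspace $F\in G_{nk}$, belonging to a set of measure larger than $1-2^{-n}$, such that
\[|y|\leq (4\pi A)^{n/(n-k)}\|y\|^{*}\quad\text{for all } y\in F.\]

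Next I would invoke the standard duality $(F,\nu|_{F})^{*}\cong (\RR^{n}/F^{\perp},(\nu^{*})_{\mathrm{quot}})$, in which a pointwise inequality $\nu_{1}\leq C\nu_{2}$ on $F$ dualizes to the \emph{reverse} inequality $(\nu_{2}^{*})_{\mathrm{quot}}\leq C(\nu_{1}^{*})_{\mathrm{quot}}$ on $\RR^{n}/F^{\perp}$. Applied to the inequality above this yields
\[\|x\|_{(\RR^{n}/F^{\perp},\|\cdot\|)}\leq (4\pi A)^{n/(n-k)}\,(|\cdot|^{*})_{\mathrm{quot}}(x)\quad \text{for all } x\in \RR^{n}/F^{\perp},\]
where $|\cdot|^{*}$ is the norm on $\RR^{n}$ whose unit ball is $D^{o}$. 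The parenthetical $|\cdot|\leq \|\cdot\|$ in the corollary is the polar of $D\subseteq B^{o}$, namely $B\subseteq D^{o}$, so the corollary is in fact using the symbol $|\cdot|$ for the norm whose unit ball is $D^{o}$; with that reading, the displayed inequality is precisely $(\ref{E:equivalentnorm2})$. The probability bound transfers verbatim because we use the very same random subspace $F\in G_{nk}$.

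The main obstacle is the duality bookkeeping: one must simultaneously track the passage from subspace to quotient (which reverses the inequality) and the passage from $|\cdot|$ to $|\cdot|^{*}$ (exchanging $D$ and $D^{o}$), and verify that the exponent $n/(n-k)$ remains attached to the correct dimension in the Grassmannian. Once those identifications are fixed, the corollary is a direct transcription of Theorem~6.1.
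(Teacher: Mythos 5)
Your proof is correct and is exactly the argument the paper intends: the paper's entire proof of this corollary is the single sentence ``This is precisely the dual of the above theorem,'' and you have supplied the duality bookkeeping (subspace/quotient exchange plus $D \leftrightarrow D^{o}$) that it leaves implicit. Your observation that the corollary's $|\cdot|$ must be the norm whose unit ball is $D^{o}$ rather than $D$ --- so that the parenthetical $|\cdot|\leq\|\cdot\|$ and the inequality $(\ref{E:equivalentnorm2})$ are consistent --- is the right reading of a typo in the statement as printed.
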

\begin{proof} This is precisely the dual of the above theorem.
\end{proof}

Here is the main application of the above corollary to dimension theory.

\begin{theorem}\label{T:sumsubadd} Let $\Gamma$ be a countable group with a uniformly bounded action on separable Banach spaces $X,Y.$ Let $\Sigma=(\sigma_{i}\colon \Gamma\to \Isom(V_{i}))$ with $\dim V_{i}<\infty.$  Suppose that $V_{i}$ is the complexification of a real Banach space $V_{i}'$ such that 
\[\sup_{i}\vr((V_{i}')^{*})<\infty,\]
and there are constants $C_{1},C_{2}>0$ so that
\[C_{1}(\|x\|_{V_{i}'}+\|y\|_{V_{i}'})\leq \|x+iy\|\leq C_{2}(\|x\|_{V_{i}'}+\|y\|_{V_{i}'}),\]
for all $x,y\in V_{i}.$ 
Then the following inequalities hold,
\[\underline{\dim}_{\Sigma}(X\oplus Y_,\Gamma)\geq \underline{\dim}_{\Sigma}(X,\Gamma)+\underline{\dim}_{\Sigma,X_{i}}(Y,\Gamma),\]
\[\dim_{\Sigma}(Y_{1}\oplus Y_{2},\Gamma)\geq \dim_{\Sigma}(X,\Gamma)+\underline{\dim}_{\Sigma}(Y,\Gamma),\]
\[\dim_{\Sigma}(Y^{\oplus n},\Gamma)\geq n \dim_{\Sigma}(Y,\Gamma),\]

\end{theorem}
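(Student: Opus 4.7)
The plan is to combine a direct-sum construction on Hom-sets with Voiculescu's Hilbert-space Lemma \ref{L:Voic}, reducing the Banach-valued problem to a Hilbert-valued one via the volume-ratio Corollary \ref{C:equivquo}. Fix dynamically generating sequences $S_j=(y^{(j)}_k)_{k=1}^{\infty}$ for $Y_j$ and interleave to obtain a dynamically generating sequence $S$ for $Y_1\oplus Y_2$, whose odd and even indices track $S_1$ and $S_2$. Using the invariance of $\dim_{\Sigma}$ under the choice of product norm, I pick the $\ell^{2}$-type norm $\rho(f)=\left(\sum_k 2^{-k}|f(k)|^{2}\right)^{1/2}$, so that $\rho_{V_i}^{2}$ decomposes orthogonally into its odd and even blocks. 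For $T_j\in\Hom_{\Gamma}(S_j,F,m,\delta,\sigma_i)$, the map $(T_1\oplus T_2)(y_1,y_2):=T_1(y_1)+T_2(y_2)$ lies in $\Hom_{\Gamma}(S,F,m,2\delta,\sigma_i)_{2}$, and $\alpha_{S}(T_1\oplus T_2)$ is exactly the interleaving of $\alpha_{S_1}(T_1)$ and $\alpha_{S_2}(T_2)$; in particular any subspace that $\varepsilon$-covers the direct-sum image simultaneously controls each half.

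By Lemma \ref{L:finite sequence} I replace $\rho$ by a truncation $\rho^{(N)}$, reducing to a finite-dimensional approximation problem in $V_i^{N}$ equipped with a weighted $\ell^{2}$ norm built from $\|\cdot\|_{V_i}$. When $V_i=\ell^{2}(d_i)$ this is Hilbertian and Lemma \ref{L:Voic} directly gives
\[\widehat{d}_{C_2^{-1}\delta}(A_1\oplus A_2)\geq \widehat{d}_{C_1^{-1}\sqrt{5\delta}}(A_1)+\widehat{d}_{C_1^{-1}\sqrt{5\delta}}(A_2),\]
once the sets $A_j=\alpha_{S_j}(\Hom_{\Gamma}(S_j,\ldots))$ have $\rho_{V_i}$-norms in $[C_1,C_2]$. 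For the general Banach case, Corollary \ref{C:equivquo} applied to $(V_i')^{*}$ furnishes, with probability exceeding $1-2^{-d_i}$, a quotient $\pi_i\colon V_i\to V_i/Z_i$ with $\dim Z_i=\lceil \beta d_i\rceil$ on which $\|\cdot\|_{V_i}$ and some Hilbert norm $|\cdot|_i$ are comparable by the factor $(4\pi A)^{1/\beta}$, which is uniform in $i$ because $A$ bounds the volume ratios. A low-dimensional subspace $W$ which $\varepsilon$-covers $\alpha_{S}(\{T_1\oplus T_2\})$ in $\rho^{(N)}_{V_i}$ has image $\pi_i W$ of no larger dimension that $(\text{const}\cdot\varepsilon)$-covers $\pi_i\alpha_{S}(\{T_1\oplus T_2\})$ in the Hilbert pseudonorm. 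Applying the Hilbert case in $V_i/Z_i$, adding back the loss $\dim Z_i\leq \beta\dim V_i$, normalizing by $\dim V_i$, taking $\liminf_i$ or $\limsup_i$ as appropriate, and finally letting $\beta\to 0$, yields the first two inequalities of the theorem.

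The principal obstacle is verifying the two-sided norm bound $C_1\leq \rho_{V_i}(\alpha_{S_j}(T_j))\leq C_2$ required by Lemma \ref{L:Voic}: the upper bound is immediate from $\|T_j\|\leq M$, but the lower bound forces one to cut down to the sub-family $\{T_j:\rho_{V_i}(\alpha_{S_j}(T_j))\geq \eta\}$. Using the scaling remark following Lemma \ref{L:differentdim} together with the observation that the $\eta$-ball around $0$ is already covered by a subspace of negligible dimension, this restriction costs only a vanishing fraction of the approximation dimension as $\eta\to 0$. A related subtlety is that the random quotient $\pi_i$ must also avoid collapsing the distinguished vectors $T_j(y^{(j)}_k)$; this is secured by a union bound against the concentration estimate on the Grassmannian built into Corollary \ref{C:equivquo}, applied to the finitely many coordinates retained in $\rho^{(N)}$. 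Finally, the inequality $\dim_{\Sigma}(Y_1^{\oplus n},\Gamma)\geq n\dim_{\Sigma}(Y_1,\Gamma)$ follows from exactly the same scheme run with $n$ symmetric copies in place of two: the direct-sum construction extends verbatim, and because all factors are identical one can take a single $\limsup_i$ uniformly across all $n$ blocks instead of the mixed $\limsup$/$\liminf$ that was needed in the two-factor case.
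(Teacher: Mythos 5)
Your proposal is correct and follows essentially the same route as the paper: interleave generating sequences, truncate the product norm via Lemma \ref{L:finite sequence}, use Corollary \ref{C:equivquo} to pass to a Hilbertian quotient of codimension $\approx\kappa\dim V_{i}$, apply Lemma \ref{L:Voic} after discarding the small-norm vectors (which cost nothing), and lift the covering subspace back at a cost of $\kappa\dim V_{i}$ per coordinate before letting $\kappa\to 0$. The only divergence is your union bound over the Grassmannian to protect the distinguished vectors, which is unnecessary since the deterministic lifting argument already absorbs any collapsed vectors.
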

\begin{proof}  We will do the proof for $\dim$ only, the proof of the other claims are the same. Let $S=(x_{n})_{n=1}^{\infty},T=(y_{n})_{n=1}^{\infty}$ be   dynamically generating sequences, enumerate $S\oplus \{0\}\cup \{0\}\oplus T$ by $x_{1},y_{1},x_{2},y_{2},\cdots,$ and fix integers $k,m.$   By Lemma $\ref{L:finite sequence},$ it suffices to show that for fixed $m,k\in \NN,$ and for the pseudonorms $\rho,\rho_{1},\rho_{2}$ on $l^{\infty}(\NN)$ given by
\[\rho(f)=\left(\sum_{j=1}^{m+k}|f(j)|^{2}\right)^{1/2},\]
\[\rho_{1}(f)=\left(\sum_{j=1}^{m}|f(j)|^{2}\right)^{1/2},\]
\[\rho_{2}(f)=\left(\sum_{j=1}^{k}|f(j)|^{2}\right)^{1/2},\]
 we have
\[f.\dim_{\Sigma}(S\oplus 0\cup 0\oplus T,\rho)\geq f.\underline{\dim}_{\Sigma}(S,\rho_{1})+f.\dim_{\Sigma}(T,\rho_{2}).\]

 Fix $\kappa,\varepsilon>0$ and fix $\eta>0$ which will depend upon $\kappa,\varepsilon$ in a manner to be determined later. By Corollary $\ref{C:equivquo}$ there is a constant $A,$ which depends only on $\kappa,C_{1},C_{2}$  Hilbert space norms $|\cdot|_{i}$ on $X_{i},$ and finite dimensional complex subspaces $F_{i}\subseteq V_{i}^{*}$ of complex dimension $\lfloor{(1-\kappa)(\dim V_{i})\rfloor}$ such that
\[\frac{1}{A}|x|_{i}\leq \|x\|\leq \|x\|\leq A|x|_{i}\]
 for all $x\in V_{i}/F_{i}^{\perp}.$ Here, as in the Corollary $\ref{C:equivquo},$ we abuse notation by using $\|x\|$ for the norm on $X_{i}/F_{i}^{\perp}$ induced by $\|\cdot\|,$ and similarly for $|\cdot|.$ 

	For $m'\geq m\in \NN,\delta>0$ and $F\subseteq \Gamma$ finite we have
\[\Hom_{\Gamma}(S,F,2m',\delta,\sigma_{i})\oplus \Hom_{\Gamma}(T,F,2m',\delta,\sigma_{i})_{2}\subseteq \Hom_{\Gamma}((S\oplus \{0\})\cup(\{0\}\oplus T),F,m',2\delta,\sigma_{i}).\]
Thus
\[\widehat{d}_{\eta}\left(\Hom_{\Gamma}((S\oplus \{0\})\cup(\{0\}\oplus T),F,2m',2\delta,\sigma_{i})_{2},\rho\right)\geq\]
\[\widehat{d}_{\eta}\left(\Hom_{\Gamma}(S,F,2m',\delta,\sigma_{i})\oplus \Hom_{\Gamma}(T,F,2m',\delta,\sigma_{i})_{2},\rho\right).\]

	Let 
\[K_{1}=\{(T(x_{1}),\cdots,T(x_{m})):T\in \Hom_{\Gamma}(S,F,2m',\delta,\sigma_{i})\}\]
\[K_{2}=\{(S(y_{1}),\cdots,S(y_{k})):S\in \Hom_{\Gamma}(S,F,2m',\delta,\sigma_{i})\}.\]
Then, by definition,
\[\widehat{d}_{\eta}\left(\Hom_{\Gamma}(S,F,2m',\delta,\sigma_{i})\oplus \Hom_{\Gamma}(T,F,2m',\delta,\sigma_{i}),\rho\right)=\]
\[d_{\eta}\left(K_{1}\oplus K_{2},\|\cdot\|^{\oplus m}\oplus  \|\cdot\|^{\oplus k}\right)\]
where we use the $l^{2}$-direct sum.

	Let $\pi_{i}\colon V_{i}\to V_{i}/F_{i}^{\perp}$ be the quotient map and let
\[G_{j}=\pi_{i}^{\oplus l}(K_{j}),\]
where $l=m$ if $j=1,$ and $l=k$ if $j=2.$ 

	Then
\[d_{\eta}\left(K_{1}\oplus K_{2},\|\cdot\|^{\oplus m}\oplus  \|\cdot\|^{\oplus k}\right)\geq d_{\eta}\left(G_{1}\oplus G_{2},\|\cdot\|^{\oplus m}\oplus  \|\cdot\|^{\oplus k}\right)\geq\]
\[ d_{A \eta}\left(G_{1}\oplus G_{2},|\cdot|_{i}^{\oplus m}\oplus |\cdot|_{i}^{\oplus k}\right).\]
Set
\[B_{i}=\left\{x\in G_{i}:lA\geq |x|\geq A\frac{\varepsilon}{4}\right\},\]
where $l=m$ if $i=1,$ and $l=k$ if $i=2.$ 

	Then
\begin{align*}
d_{A\eta}\left(G_{1}\oplus G_{2},|\cdot|_{i}^{\oplus m}\oplus |\cdot|_{i}^{\oplus k}\right)&\geq d_{\max(l,m)(\varepsilon/4)^{-1}\sqrt{5\eta A\max(l,m)}}(B_{1},|\cdot|^{\oplus m})\\
&+d_{\max(l,m)(\varepsilon/4)^{-1}\sqrt{5A\eta \max(l,m)})}(B_{2},|\cdot|^{\oplus k}).
\end{align*}
Setting $\eta=\frac{\varepsilon^{4/3}}{A\max(l,m)\cdot 5^{1/3}}$ we have
\begin{align*}
d_{\eta}(K_{1}\oplus K_{2},\|\cdot\|^{\oplus m}\oplus \|\cdot\|^{\oplus k})&\geq d_{\frac{\varepsilon}{A}}(B_{1},|\cdot|^{\oplus m})+d_{\frac{\varepsilon}{A}}(B_{2},|\cdot|^{\oplus k})\\
&\geq d_{\varepsilon}(B_{1},\|\cdot\|^{\oplus k})+d_{\varepsilon}(B_{2},\|\cdot\|^{\oplus k}).
\end{align*}
Since $B_{i}\supseteq\{x\in C_{i}:\|x\|\geq \frac{\varepsilon}{4}\}$ we have
\[d_{\varepsilon}(B_{1},\|\cdot\|^{\oplus k})+d_{\varepsilon}(B_{2},\|\cdot\|^{\oplus k})=d_{\varepsilon}(G_{1},\|\cdot\|^{\oplus k})+d_{\varepsilon}(G_{2},\|\cdot\|^{\oplus k}).\]

	Let $E_{i}\subseteq (V_{i}/F_{i}^{\perp})^{\oplus l}$ be a linear subspace of minimal dimension which $\varespilon$-contains $C_{i}$ with respect to $\|\cdot\|^{\oplus l}$ ($l=k,$ if $i=1,$ and $l=m$ if $i=2.$) Let $\widetilde{E_{i}}\subseteq V_{i}$ be a linear subspace such that $\dim E_{i}=\dim \widetilde{E_{i}}$ and $\pi_{i}^{\oplus l}(\widetilde{E_{i}})=E_{i}.$ Set $W_{i}=\widetilde{E_{i}}+F_{i}^{\oplus l}.$ Then  $W_{i}$ has dimension at most $\dim E_{i}+lc_{i}$ with $\lim_{i\to \infty}\frac{c_{i}}{\dim V_{i}}=\kappa,$ since $\dim V_{i}\to \infty,$ and $K_{i}\subseteq_{\varepsilon,\|\cdot\|} V_{i}.$  Thus 
\[d_{\varepsilon}(G_{i},\|\cdot\|^{\oplus l})\geq \widehat{d}_{\varepsilon}(K_{i},\|\cdot\|^{\oplus l})-lc_{i}.\]
Since $\varepsilon\to 0$ as $\eta\to 0$ (and vice versa) we conclude that
\[\dim_{\Sigma}(S_{1}\oplus S_{2},\Gamma,\|\cdot\|_{S,T,i})\geq -\kappa(k+m)+\dim_{\Sigma}(S_{1},\Gamma,\|\cdot\|_{S,i})+\underline{\dim}_{\Sigma}(Y_{2},\Gamma,\|\cdot\|_{T,i}).\]
Since $\kappa$ is arbitrary this proves the desired inequality.

\end{proof}
\begin{cor}\label{C:l^{p}supadd} Let $2\leq p<\infty.$

(a)  Let $\Gamma$ be a sofic group with uniformly bounded actions on separable Banach spaces $X,Y$ and let $\Sigma$ be a sofic approximation. Then
\[\dim_{\Sigma,l^{p}}(X\oplus Y,\Gamma)\geq \dim_{\Sigma,l^{p}}(X,\Gamma)+\underline{\dim}_{\Sigma,l^{p}}(Y,\Gamma)\]
\[\underline{\dim}_{\Sigma,l^{p}}(X\oplus Y,\Gamma)\geq \underline{\dim}_{\Sigma,l^{p}}(X,\Gamma)+\underline{\dim}_{\Sigma,l^{p}}(Y,\Gamma)\]

(b) Let $\Gamma$ be an $\R^{\omega}$-embeddable group with uniformly bounded actions on separable Banach spaces $X,Y$ and let $\Sigma$ be an embedding sequence. Then
\[\dim_{\Sigma,S^{p}}(X\oplus Y,\Gamma)\geq \dim_{\Sigma,S^{p}}(X,\Gamma)+\underline{\dim}_{\Sigma,S^{p}}(Y,\Gamma)\]
\[\underline{\dim}_{\Sigma,S^{p}}(X\oplus Y,\Gamma)\geq \underline{\dim}_{\Sigma,S^{p}}(X,\Gamma)+\underline{\dim}_{\Sigma ,S^{p}}(Y,\Gamma).\]
\end{cor}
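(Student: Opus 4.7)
My approach is to deduce both parts immediately from Theorem \ref{T:sumsubadd}, so the entire content of the proof reduces to checking that the sequences of spaces $V_i$ involved satisfy the hypotheses of that theorem. I plan to handle (a) first and then (b) by a parallel argument.

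For part (a), I would take $V_i = l^p(d_i)$ (over $\CC$) and $V_i' = l^p(d_i,\RR)$, so that $V_i = V_i' \oplus i V_i'$ is the standard complexification. The coordinatewise inequalities $\max(|a|,|b|) \leq |a+ib| \leq |a| + |b|$, summed in $l^p$, immediately give the norm comparison
\[
\tfrac{1}{2}\bigl(\|x\|_{V_i'} + \|y\|_{V_i'}\bigr) \leq \|x + iy\|_{V_i} \leq \|x\|_{V_i'} + \|y\|_{V_i'}
\]
with absolute constants independent of $i$ and $p$. The remaining hypothesis is $\sup_i \vr((V_i')^*) = \sup_i \vr(l^{p'}(d_i,\RR)) < \infty$, where $p' = p/(p-1)$. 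Since $p \geq 2$ forces $p' \leq 2$, this is the classical uniform bound on the volume ratio of the $l^q_n$ unit ball for $q \leq 2$ (see e.g.\ \cite{Pis}, Chapter~6).

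For part (b), I would take $V_i = S^p(d_i)$ and $V_i' = H_{d_i}$, the real Banach space of $d_i \times d_i$ Hermitian matrices equipped with the Schatten $p$-norm. The decomposition $A = \tfrac{A+A^*}{2} + i \cdot \tfrac{A - A^*}{2i}$ exhibits $V_i = V_i' \oplus i V_i'$, and since the Schatten $p$-norm is invariant under adjoints, $\|\Rea A\|_{S^p}, \|\Ima A\|_{S^p} \leq \|A\|_{S^p}$, which together with the triangle inequality yields the norm equivalence with the same constants as in (a). Under the real trace pairing $\langle A, B \rangle = \Tr(AB)$, $(V_i')^*$ is isometrically $(H_{d_i}, \|\cdot\|_{S^{p'}})$, so the remaining hypothesis is the uniform bound $\sup_n \vr(H_n, \|\cdot\|_{S^{p'}}) < \infty$ for $p' \leq 2$, which is the noncommutative analogue of the $l^{p'}$ bound and is also classical in local Banach space theory. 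With both hypotheses verified, Theorem \ref{T:sumsubadd} yields the claimed inequalities in (a) and (b) simultaneously (the $\dim$ version and the $\underline{\dim}$ version come from the two inequalities already packaged into that theorem).

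The main point worth flagging is that Theorem \ref{T:sumsubadd} does \emph{not} require that the isometries $\sigma_i(s)$ preserve the real subspace $V_i'$. This is what makes the argument in (b) work uniformly for both the conjugation version and the left-multiplication version of $\dim_{\Sigma,S^p}$, even though left multiplication by a unitary does not preserve $H_{d_i}$; only the ambient complex geometry of $V_i$ and the volume ratio of $(V_i')^*$ enter the hypotheses. The one step that is not purely formal is locating a clean reference for the Schatten volume ratio bound, so that is the mild technical obstacle I would expect; if no convenient reference is available it can be proved directly by a cotype $2$ argument, since the Schatten $p'$-classes are uniformly of cotype $2$ when $p' \leq 2$.
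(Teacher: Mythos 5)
Your proposal is correct and follows essentially the same route as the paper: both deduce the corollary from Theorem \ref{T:sumsubadd} by taking $V_i'$ to be real $l^p(d_i)$ (resp.\ the Hermitian matrices in $S^p(d_i)$) and invoking the classical uniform volume-ratio bounds for the $l^{q}$ and Schatten-$q$ balls with $q\leq 2$. You simply spell out the complexification norm equivalence and the duality step that the paper leaves implicit.
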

\begin{proof} For $1\leq q\leq \infty,$ let $B_{q}$ be the unit ball of $L^{q}(\{1,\cdots,n\},\mu_{n})$ where $\mu_{n}$ is the uniform  measure.

 It is known that for all $q,$
\[\inf_{n}\left(\frac{\vol(B_{q})}{\vol(B_{2})}\right)^{1/n}>0,\]
\[\sup_{n}\left(\frac{\vol(B_{q})}{\vol(B_{2})}\right)^{1/n}<\infty,\]
(see the computation on page 11 of \cite{Pis}).
 Similarly if we let $C_{q}$ be the unit ball of $\{A\in M_{n}(\CC):A=A^{*}\}$ in the norm $\|\cdot\|_{L^{p}(\frac{1}{n}\Tr)},$ it is known that for all $q,$ 
\[\inf_{n}\left(\frac{\vol(C_{q})}{\vol(C_{2})}\right)^{1/n}>0,\]
\[\sup_{n}\left(\frac{\vol(C_{q})}{\vol(C_{2})}\right)^{1/n}<\infty,\]
(see \cite{TJ})
Apply the preceding theorem.
\end{proof}

 We note one last property of $l^{2}$-dimension for representations, which will be used in our follow-up paper to show that our dimension agrees with von Neumann dimension in the $l^{2}$-case.

\begin{proposition} Let  $H$  be a separable unitary representation of a $\R^{\omega}$-embeddable group $\Gamma.$ Let $\Sigma$ be an embedding sequence of $\Gamma.$ Suppose that $H=\overline{\bigcup_{k=1}H_{k}}$ with $H_{k}$ increasing, closed invariant subspaces, and that each $H_{k}$ has a finite dynamically generating sequence.  Then 
\[\dim_{\Sigma,l^{2}}(H,\Gamma)=\sup_{k}\dim_{\Sigma,l^{2}}(H_{k},\Gamma),\]
\[\underline{\dim}_{\Sigma,l^{2}}(H,\Gamma)=\sup_{k}\underline{\dim}_{\Sigma,l^{2}}(H_{k},\Gamma).\]

\end{proposition}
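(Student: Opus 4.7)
The plan is to prove each equality via two inequalities. For the ``$\geq$'' direction we use the Hilbert-space structure: since each $H_k$ is a closed $\Gamma$-invariant subspace of the unitary representation $H$, the orthogonal projection $P_k\colon H\to H_k$ is a bounded $\Gamma$-equivariant surjection, so Proposition~\ref{P:surjection} yields $\dim_{\Sigma,l^{2}}(H_k,\Gamma)\leq\dim_{\Sigma,l^{2}}(H,\Gamma)$ and similarly for $\underline{\dim}_{\Sigma,l^{2}}$.  Taking suprema over $k$ gives the inequality, and shows incidentally that both quantities are monotone increasing in $k$.

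For the ``$\leq$'' direction the key is a compatible choice of dynamically generating sequences: concatenate finite dynamically generating sequences for $H_1\subseteq H_2\subseteq\cdots$ (padded by zeros when convenient) to obtain a dynamically generating sequence $S=(x_j)_{j=1}^{\infty}$ for $H$, whose initial segments $S_k=(x_1,\ldots,x_{n_k})$ are dynamically generating sequences for $H_k$ with the lengths $n_k$ at our disposal.  The crucial observation is that whenever $m\leq n_k$ one has $X_{F,m}^{(S)}=X_{F,m}^{(S_k)}\subseteq H_k$, so
\[
\Hom_\Gamma(S,F,m,\delta,\sigma_i)=\Hom_\Gamma(S_k,F,m,\delta,\sigma_i)
\]
as identical sets of linear maps with $\alpha_S(T)=\alpha_{S_k}(T)$ for each $T$.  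Since $\rho^{(N)}\leq\rho$, this gives
\[
\widehat{d}_\varepsilon(\Hom_\Gamma(S,F,m,\delta,\sigma_i),\rho^{(N)})\leq \widehat{d}_\varepsilon(\Hom_\Gamma(S_k,F,m,\delta,\sigma_i),\rho)
\]
whenever $N\leq n_k$.

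By Lemma~\ref{L:finite sequence} it suffices to bound $f.\dim_\Sigma(S,\rho^{(N)})$ (and its lower-dimension analogue) by $\sup_k\dim_{\Sigma,l^{2}}(H_k,\Gamma)$.  Fix $\varepsilon>0$ and $\eta>0$, and construct inductively a cofinal sequence $(F_k,m_k,\delta_k)$ in the directed set of parameters so that for each $k$,
\[
\limsup_i \frac{1}{\dim V_i}\widehat{d}_\varepsilon(\Hom_\Gamma(S_k,F_k,m_k,\delta_k,\sigma_i),\rho)\leq\dim_{\Sigma,l^{2}}(H_k,\Gamma)+\eta
\]
(with the parallel $\liminf_i$ bound for $\underline{\dim}_{\Sigma,l^{2}}(H_k,\Gamma)$), which is possible by Lemma~\ref{L:dimlimit} and monotonicity of $\widehat{d}_\varepsilon$ in $(F,m,\delta)$; at each stage pad $S_k$ so that $n_k\geq m_k$.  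The preceding displayed inequality then bounds the $S$-side quantity at $(F_k,m_k,\delta_k)$ by $\sup_k\dim_{\Sigma,l^{2}}(H_k,\Gamma)+\eta$.  Since the $S$-side quantity is itself monotone decreasing in $(F,m,\delta)$, its $\liminf$ (resp.\ $\limsup$) over the full directed set is dominated by the value along this cofinal sequence; Lemma~\ref{L:dimlimit} identifies these as $f.\dim_\Sigma(S,\varepsilon,\rho^{(N)})$ and $\underline{f.\dim}_\Sigma(S,\varepsilon,\rho^{(N)})$.  Letting $\eta\to 0$, taking $\sup_\varepsilon$, and sending $N\to\infty$ by Lemma~\ref{L:finite sequence} finishes the proof.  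The main technical obstacle is the interdependence between the threshold $(F_k,m_k,\delta_k)$ and the required length $n_k\geq m_k$; choosing the threshold first and then padding $S_k$ resolves it.
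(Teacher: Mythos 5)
Your proof is correct. The first inequality, via the orthogonal projections $P_k$ and Proposition \ref{P:surjection}, is exactly the paper's appeal to monotonicity for unitary representations, and the reverse inequality lands where the paper's does but by a different mechanism. The paper uses the same concatenated generating sequence $S$, but keeps the full product norm $\sum_j 2^{-j}\|T(\xi_j)\|$ throughout: given an almost equivariant $T$ it restricts to $H_M$, approximates the restriction inside a small subspace of $B(H_M,\CC^{d_i})$, and pulls the approximant back to $H$ by composing with the orthogonal projection onto $H_M$, burying the generators of index greater than $C_M$ in the tail $2\sum_{n>C_M}2^{-n}$ of the weights. You instead truncate the pseudonorm first via Lemma \ref{L:finite sequence} and then observe that for $m\le n_k$ the sets $\Hom_\Gamma(S,F,m,\delta,\sigma_i)$ and $\Hom_\Gamma(S_k,F,m,\delta,\sigma_i)$ are literally the same maps on the common finite-dimensional domain $X_{F,m}\subseteq H_k$, so no projection or extension is needed at all. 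What your route buys: this direction then holds verbatim for any uniformly bounded Banach action exhausted by finitely generated closed invariant subspaces, with the Hilbert-space structure used only for the easy inequality. What it costs: the interleaved induction choosing $(F_k,m_k,\delta_k)$ before padding $S_k$ to length $n_k\ge m_k$, plus the appeals to Lemmas \ref{L:dimlimit} and \ref{L:finite sequence}. One small imprecision: $\widehat{d}_{\varepsilon}$ is not literally monotone in $(F,m,\delta)$ for the untruncated $\rho$ (only up to replacing $\varepsilon$ by $2\varepsilon$, as in the proof of Lemma \ref{L:dimlimit}), but for $\rho^{(N)}$ restricted to the cofinal set $m\ge N$ it is genuinely decreasing, which is all your selection of a cofinal sequence of parameters actually needs; so the argument stands.
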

 
\begin{proof} We will do the proof for $\dim$ only, the other cases are the same. By Proposition $\ref{P:dsumsubadd}$ we know that $\dim_{\Sigma,l^{2}}$ is monotone for unitary representations, so we only need to show
\[\dim_{\Sigma,l^{2}}(H,\Gamma)\geq\sup_{k}\dim_{\Sigma,l^{2}}(H_{k},\Gamma).\]

	Let $\{\xi^{(k)}_{1},\cdots,\xi^{(k)}_{r_{k}}\}$ be unit vectors which dynamically generate $H_{k}.$ Let $S_{N}$ be the sequence
\[\xi^{(1)}_{1},\cdots,\xi^{(1)}_{r_{1}},\xi^{(2)}_{1},\cdots,\xi^{(2)}_{r_{2}},\cdots,\xi^{(N)}_{1},\cdots,\xi^{(N)}_{r_{N}},\]
i.e. the $l^{th}$ term of $S_{N}$ is
\[\xi^{(i)}_{q_{l}}\]
if $i$ is the largest integer such that
\[C_{i}=\sum_{j\leq i}r_{j}<l,\]
and
\[q_{l}=l-\sum_{j\leq i} r_{j}.\]
Let $S$ be the sequence obtained by the infinite concatenation of the $S_{N}$'s. We will use $S_{N}$ to compute $\dim_{\Sigma,l^{2}}(H_{N},\Gamma)$ and $S$ to compute $\dim_{\Sigma,l^{2}}(H,\Gamma),$ we also use the pseudornorms
\[\|T\|_{S,i}=\sum_{j=1}^{\infty}\frac{1}{2^{j}}\|T(\xi_{j})\|\]
\[\|T\|_{S_{N},i}=\sum_{j=1}^{\infty}\frac{1}{2^{j}}\|T(\xi_{j})\|.\]

Fix $\varepsilon>0,$ and let $M$ be such that $2^{-M}<\varepsilon.$  Suppose $F\subseteq \Gamma$ is finite,$\delta>0$ and $m\in \NN$ with $m> C_{M}.$ Let $P_{M}\in B(H)$ be the projection onto $H_{M}.$  Suppose $V$ is a subspace of $B(H_{M},\CC^{d_{i}})$ of minimal dimension such that
\[\Hom_{\Gamma}(S_{M},F,m,\delta,\sigma_{i})\subseteq_{\varepsilon,\|\cdot\|_{S,i}}V,\]
let $\widetidle{V}\subseteq B(H,\CC^{d_{i}})$ be the image of $V$ under the map $T\to T\circ P_{M}.$ If $T\in \Hom_{\Gamma,l^{2}(d_{i})}(S,F,m,\delta,\sigma_{i})$ then $\widetidle{T}=T\big|_{H_{M}}$ is in $\Hom_{\Gamma}(S_{M},F,m,\delta,\sigma_{i}),$ and there exists $\phi\in V$ such that $\|\phi-\widetidle{T}\|_{S_{M},i}<\varepsilon.$ Then
\[\|\phi\circ P-T\|_{S,i}\leq 2\sum_{n=C_{M}+1}^{\infty}\frac{1}{2^{n}}+\|\phi-\widetilde{T}\|_{S_{M},i}\leq 2^{-m+1}+\varepsilon\leq 3\varepsilon.\]
Thus
\[\Hom_{\Gamma}(S,F,m,\delta,\sigma_{i})\subseteq_{3\varepsilon,\|\cdot\|_{S,i}}\widetilde{V},\]
so
\[d_{3\varepsilon}(\Hom_{\Gamma}(S_{M},F,m,\delta,\sigma_{i}),\|\cdot\|_{S,i})\leq d_{\varepsilon}(\Hom_{\Gamma}(S_{M},F,m,\delta,\sigma_{i}),\|\cdot\|_{S_{M},i}).\]
Thus
\[\dim_{\Sigma,l^{2}}(S,\Gamma,3\varepsilon,\|\cdot\|_{S,i,2})\leq \dim_{\Sigma,l^{2}}(S_{M},3\varepsilon,\|\cdot\|_{S,i,2})\leq \sup_{M}\dim_{\Sigma,l^{2}}(\pi_{M})\]
and similarly for $\underline{\dim}.$
Taking the supremum over $\varepsilon>0$ completes the proof.
\end{proof}

\begin{cor}\label{C:dinfsumadd} Let $\Gamma$ be a  $\R^{\omega}$-embeddable group, and let  $\Sigma=\left(\sigma_{i}\colon \Gamma\to U(d_{i})\right)$ be an embedding sequence. Let $\pi_{k}\colon \Gamma\to U(H_{k})$ be a representations of $\Gamma$ such that each $\pi_{k}$ has a finite dynamically generating sequence. Then
\[\dim_{\Sigma,l^{2}}\left(\bigoplus_{k=1}^{\infty}\pi_{k}\right)\leq \sum_{k=1}^{\infty}\dim_{\Sigma,l^{2}}(\pi_{k})\]
\[\underline{\dim}_{\Sigma,l^{2}}\left(\bigoplus_{k=1}^{\infty}\pi_{k}\right)\geq \sum_{k=1}^{\infty}\underline{\dim}_{\Sigma,l^{2}}(\pi_{k}).\]
\end{cor}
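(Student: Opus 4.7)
The plan is to combine the preceding proposition (which gives inner regularity of $\dim_{\Sigma,l^{2}}$ under increasing filtrations with finite dynamical generators) with the monotonicity in Proposition \ref{P:surjection}, the subadditivity in Proposition \ref{P:dsumsubadd}, and the superadditivity in Theorem \ref{T:sumsubadd}. Note that Theorem \ref{T:sumsubadd} applies to $l^{2}$-dimension for $\R^{\omega}$-embeddable groups because $V_{i}=\CC^{d_{i}}$ is Hilbert, so the real form has Hilbert dual with volume ratio equal to $1$.

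Let $H_{k}$ denote the representation space of $\pi_{k}$, set $H_{N}=\bigoplus_{k=1}^{N}H_{k}$, and $H=\bigoplus_{k=1}^{\infty}H_{k}$. My first step is to verify that each $H_{N}$ carries a finite dynamically generating sequence: if $\{\xi_{1}^{(k)},\ldots,\xi_{r_{k}}^{(k)}\}$ is a finite dynamical generator for $H_{k}$, then the collection of the vectors $\xi_{j}^{(k)}$ (embedded in $H_{N}$ via the inclusion $H_{k}\hookrightarrow H_{N}$) for $k\leq N$ and $1\leq j\leq r_{k}$ is finite and dynamically generates $H_{N}$. The preceding proposition then applies to the filtration $H_{N}\subseteq H_{N+1}$ with dense union, giving
\[\dim_{\Sigma,l^{2}}(H,\Gamma)=\sup_{N}\dim_{\Sigma,l^{2}}(H_{N},\Gamma),\]
and analogously for $\underline{\dim}_{\Sigma,l^{2}}$.

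For the upper bound, I iterate Proposition \ref{P:dsumsubadd} along the short exact sequence $0\to H_{N-1}\to H_{N}\to H_{N}/H_{N-1}\cong\pi_{N}\to 0$, obtaining by induction on $N$
\[\dim_{\Sigma,l^{2}}(H_{N},\Gamma)\leq\sum_{k=1}^{N}\dim_{\Sigma,l^{2}}(\pi_{k}).\]
Taking the supremum in $N$ and using the previous display yields the first inequality. For the lower bound, the canonical coordinate projection $H\to H_{N}$ is $\Gamma$-equivariant, bounded, and surjective, hence by Proposition \ref{P:surjection},
\[\underline{\dim}_{\Sigma,l^{2}}(H_{N},\Gamma)\leq\underline{\dim}_{\Sigma,l^{2}}(H,\Gamma).\]
Iterating Theorem \ref{T:sumsubadd} on the decomposition $H_{N}=H_{N-1}\oplus \pi_{N}$ produces
\[\underline{\dim}_{\Sigma,l^{2}}(H_{N},\Gamma)\geq\sum_{k=1}^{N}\underline{\dim}_{\Sigma,l^{2}}(\pi_{k}),\]
and letting $N\to\infty$ gives the second inequality.

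I do not anticipate a substantive obstacle here; the proof is essentially a bookkeeping assembly of the tools already developed. The only point meriting attention is checking the hypotheses of the preceding proposition for the filtration $(H_{N})$, which reduces to the observation about finite dynamical generators made above.
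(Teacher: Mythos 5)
Your proof is correct and follows essentially the same route as the paper, which simply cites the preceding proposition on increasing unions together with Theorem \ref{T:sumsubadd} (with Proposition \ref{P:dsumsubadd} implicitly supplying the finite subadditivity for the upper bound, exactly as you spell out). Your verification that the $H_{N}$ carry finite dynamical generators and that the volume-ratio hypothesis of Theorem \ref{T:sumsubadd} holds for $l^{2}(d_{i})$ fills in precisely the details the paper leaves to the reader.
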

\begin{proof} The corollary is a simple consequence of the above proposition and Theorem $\ref{T:sumsubadd}.$
\end{proof}

\section{ Computation of $\dim_{\Sigma,l^{p}}(l^{p}(\Gamma,V),\Gamma)$, and $\dim_{\Sigma,S^{p},conj}(l^{p}(\Gamma,V),\Gamma).$}\label{S:lp}

In this section we show that if $\Sigma$ is a sofic approximation of $\Gamma$ and $1\leq p\leq 2,$ then
\[\dim_{\Sigma,l^{p}}(l^{p}(\Gamma,V),\Gamma)=\dim V,\]
for $V$ finite dimensional. Similarly if $\Sigma$ is a  embedding sequence of $\Gamma$ and $1\leq p\leq 2,$ we show that
\[\dim_{\Sigma,S^{p},conj}(l^{p}(\Gamma,V),\Gamma)=\dim V,\]
\[\dim_{\Sigma,l^{2}}(l^{2}(\Gamma,l^{2}(n)),\Gamma)=n,\]
again for $V$ finite dimensional. 

	The proof for sofic groups will be relatively simple, but the proof for $\R^{\omega}$-embeddable groups requires a few more lemmas.

Let $\nu$ be the unique $U(n)$ invariant Borel probability measure on $S^{2n-1},$ for the next lemma we need that if $T\colon \CC^{n}\to \CC^{n}$ is linear, then
\[\frac{1}{n}\Tr(T)=\int_{S^{2n-1}}\ip{T\xi,\xi}\,d\nu(\xi).\]
This follows from the fact that $\Tr$ is, up to scaling, the unique linear functional on $M_{n}(\CC)$ invariant under conjugation by $U(n).$ 

Additonally, we will use the following concentration of measure fact (see \cite{Al} Page 295), if $f$ is a Lipschitz function on $S^{n-1},$ then
\[\mathbb P(|f-\mathbb E f|>t)\leq 4e^{\frac{-nt^{2}}{\|f\|_{\Lip}^{2}72\pi^{2}}}.\]

\begin{lemma}\label{L:choose} Let $\Gamma$ be a $\R^{\omega}$-embeddable group,  let $\sigma_{i}\colon \Gamma\to U(d_{i})$ be an embedding sequence, and fix $E\subseteq \Gamma$ finite,$ m\in \NN.$ For $j\in \{1,\cdots,m\},\xi,\eta\in S^{2d_{i}-1}$ define
\[T_{\xi,j}\colon l^{2}(\Gamma\times \{1,\cdots,m\})\to l^{2}(d_{i}),\]
\[T_{\xi,\eta,j}\colon l^{p}(\Gamma\times \{1,\cdots,m\})\to S^{p}(d_{i})  \]
by
\[T_{\xi,j}(f)=\sum_{s\in E}f(s,j)\sigma_{i}(s)\xi,\]
\[T_{\xi,\eta,j}(f)=\sum_{s\in E}f(s,j)\sigma_{i}(s)\xi\otimes \overline{\sigma_{i}(s)\eta}.\]
Then for any $\delta>0$ and  $1\leq p<\infty,$

(a)
\[\lim_{i\to \infty}\PP(\{\xi\in S^{2d_{i}-1}:\|T_{\xi,j}:l^{2}(\Gamma\times \{1,\cdots,m\})\to l^{2}(d_{i})\|<1+\delta\})=1,\]

(b)
\[\{(\xi,\eta)\in (S^{2d_{i}-1})^{2}:\|T_{\xi,\eta,j}\colon l^{p}(\Gamma\times \{1,\cdots,m\})\to S^{p}(d_{i})\|<1+\delta\}\supseteq A_{i}\times A_{i},\]
where $A_{i}\subseteq S^{2d_{i}-1}$ has $\nu(A_{i})\to 1.$ 
\end{lemma}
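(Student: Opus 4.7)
The plan is to prove~(a) by concentration of measure on the complex sphere, and then to deduce~(b) from~(a) by factoring $T_{\xi,\eta,j}$ as a product of three matrices.

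For part~(a), I would identify $\|T_{\xi,j}\|$ with the operator norm of the matrix $U_\xi\in M_{d_i,|E|}(\CC)$ whose columns are $\{\sigma_i(s)\xi\}_{s\in E}$, using that $T_{\xi,j}$ depends on $f$ only through its restriction to $E\times\{j\}$. The Gram matrix $U_\xi^{*}U_\xi$ has unit diagonal and off-diagonal entries $\langle \sigma_i(s)^{*}\sigma_i(t)\xi,\xi\rangle$ for $s\ne t$ in $E$. The $U(d_i)$-invariance of $\nu$ gives the key identity
\[
\int_{S^{2d_i-1}}\langle \sigma_i(s)^{*}\sigma_i(t)\xi,\xi\rangle\,d\nu(\xi)=\frac{1}{d_i}\Tr(\sigma_i(s)^{*}\sigma_i(t)),
\]
and the defining property of an embedding sequence forces the right-hand side to tend to $0$ as $i\to\infty$ whenever $s\ne t$ in $\Gamma$. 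Each such quadratic form is Lipschitz on $S^{2d_i-1}$ with constant at most $2$, so L\'evy's concentration inequality together with a union bound over the finite set of pairs in $E^{2}$ with $s\ne t$ shows that, with probability tending to $1$, every off-diagonal entry of $U_\xi^{*}U_\xi$ has modulus less than any prescribed $\eta$. A Gershgorin-type estimate then yields $\|U_\xi^{*}U_\xi-I\|_{\op}<|E|\eta$, and hence $\|U_\xi\|_{\op}<1+\delta$ for $\eta$ small enough.

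For part~(b), a direct computation shows $T_{\xi,\eta,j}(f)=U_\xi D_{f}V_\eta^{*}$, where $V_\eta$ is the matrix with columns $\{\sigma_i(s)\eta\}_{s\in E}$ and $D_f$ is the diagonal matrix with entries $f(s,j)$. The tracial H\"older inequality $\|AB\|_{S^p}\leq \|A\|_{\op}\|B\|_{S^p}$ (valid for all $1\leq p<\infty$), applied twice, combined with the identity $\|D_f\|_{S^p}=\|f|_{E\times\{j\}}\|_{l^p}$, then gives
\[
\|T_{\xi,\eta,j}(f)\|_{S^p}\leq \|U_\xi\|_{\op}\|V_\eta\|_{\op}\|f\|_{l^p}.
\]
Given $\delta>0$, I would choose $\delta'>0$ with $(1+\delta')^{2}<1+\delta$ and let $A_i$ be the set furnished by part~(a) at level $\delta'$. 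Then $\nu(A_i)\to 1$, and for $(\xi,\eta)\in A_i\times A_i$ one has $\|T_{\xi,\eta,j}\|<(1+\delta')^{2}<1+\delta$.

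The only genuinely delicate step is the concentration bound in~(a); part~(b) then reduces to~(a) by a soft factorization argument that is uniform in $p$, so no complex interpolation is required.
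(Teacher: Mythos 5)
Your proof is correct, and part (a) is essentially the paper's argument in matrix language: the paper expands $\|T_{\xi,j}(f)\|_{2}^{2}$ as a quadratic form in $f$ and bounds each off-diagonal Gram entry $\ip{\sigma_{i}(s)\xi,\sigma_{i}(t)\xi}$ by $\kappa$ on a set of measure tending to $1$, using exactly the integral identity and concentration of measure you invoke; your Gershgorin phrasing of the final step is equivalent to the paper's direct estimate $\|f\chi_{E}\|_{2}^{2}+|E|^{2}\kappa\|f\|_{2}^{2}$. Part (b) is where you genuinely diverge. The paper does not factor $T_{\xi,\eta,j}$: for $\kappa$ small it perturbs the families $(\sigma_{i}(s)\xi)_{s\in E}$ and $(\sigma_{i}(s)\eta)_{s\in E}$ to exactly orthonormal systems $(\xi_{s})$, $(\eta_{s})$ within $\varepsilon$, estimates $\|T_{\xi,\eta,j}(f)-\sum_{s}f(s,j)\xi_{s}\otimes\overline{\eta_{s}}\|_{p}\leq 2|E|\varepsilon\|f\|_{p}$, and then computes $|\sum_{s}f(s,j)\xi_{s}\otimes\overline{\eta_{s}}|^{2}=\sum_{s}|f(s,j)|^{2}\eta_{s}\otimes\overline{\eta_{s}}$ exactly, so the model operator is an isometry on $l^{p}(E)$. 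Your route --- writing $T_{\xi,\eta,j}(f)=U_{\xi}D_{f}V_{\eta}^{*}$ and applying $\|ABC\|_{S^{p}}\leq\|A\|_{\op}\|B\|_{S^{p}}\|C\|_{\op}$ for rectangular matrices --- reduces (b) entirely to (a), avoids the perturbation-to-orthonormal lemma and the exact square computation, and makes it transparent that the bound is uniform in $p$. The trade-off is that you import the Schatten H\"older inequality as a black box, whereas the paper's version stays self-contained at the level of rank-one operators; for the lemma as stated, your argument is complete and arguably cleaner.
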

\begin{proof}  Let $\kappa>0$ which will depend upon $\delta>0,p$ in a manner to be determined later. Let
\[A=\bigcap_{s\ne t,s,t\in E}\{\xi\in S^{2d_{i}-1}:|\ip{\sigma_{i}(s)\xi,\sigma_{i}(t)\xi}|<\kappa\},\]
since
\[\int_{S^{2d_{i}-1}}\ip{\sigma_{i}(s)\xi,\sigma_{i}(t)\xi}\,d\nu(\xi)=\frac{1}{d_{i}}\Tr(\sigma_{i}(t)^{-1}\sigma_{i}(s))\to 0\]
for $s\ne t,$ the concentration of measure estimate mentioned before the Lemma implies that 
\[\nu(A)\to 1.\]
For the proof of $(a),(b)$ we prove that if $\xi,\eta\in A$ then 
\[\|T_{\xi,j}\|_{l^{2}\to l^{2}}\leq 1+\delta,\]
\[\|T_{\xi,\eta,j}\|_{l^{p}\to S^{p}}\leq 1+\delta,\]
if $\kappa>0$ is sufficiently small.

(a) For $f\in l^{2}(\Gamma\times \{1,\cdots,m\}),\xi \in A$ we have

\begin{align*}
\|T_{\xi,j}(f)\|_{2}^{2} &=\sum_{s,t\in E}f(s,j)\overline{f(t,j)}\ip{\sigma_{i}(s)\xi,\sigma_{i}(t)\xi}\\
&\leq \|f\chi_{E}\|_{2}^{2}+\sum_{s\ne t,s,t\in E}\|f\|_{2}^{2}\kappa\\
&\leq \|f\|_{2}^{2}(1+\kappa |E|^{2})\\
&\leq(1+\delta)\|f\|_{2}^{2}
\end{align*}

if $\kappa<\frac{\delta}{|E|^{2}}.$ 

(b) Fix $\varepsilon>0$ to be determined later. If $\kappa$ is sufficently small, then for any $(\xi,\eta) \in A^{2}$ we can find $(\xi_{s})_{s\in E}(\eta_{s})_{s\in E}$ such that $\ip{\xi_{s},\xi_{t}}=\delta_{s=t},\ip{\eta_{s},\eta_{t}}=\delta_{s=t}$ and
\[\|\xi_{s}-\sigma_{i}(s)\xi\|<\varepsilon,\|\eta_{s}-\sigma_{i}(s)\eta\|<\varepsilon.\]
	Then
\[\left\|T_{\xi,\eta,j}(f)-\sum_{s\in E}f(s)\xi_{s}\otimes \overline{\eta_{s}}\right\|_{p}\leq \|f\|_{p}\sum_{s\in E}(\|\xi_{s}-\sigma_{i}(s)\xi\|+\|\sigma_{i}(s)\eta-\eta_{s}\|)\leq 2|E|\varepsilon\|f\|_{p}.\]
Note that
\[\left|\sum_{s\in E}f(s)\xi_{s}\otimes \overline{\eta_{s}}\right|^{2}=\sum_{s,t\in E}\overline{f(s)}f(t)\ip{\xi_{t},\xi_{s}}\eta_{s}\otimes \overline{\eta_{t}}=\]
\[\sum_{s\in E}|f(s)|^{2}\eta_{s}\otimes \overline{\eta_{s}}.\]
Thus
\[\left\|\sum_{s\in E}f(s)\xi_{s}\otimes \overline{\eta_{s}}\right\|_{p}^{p}=\|f\chi_{E}\|_{p}^{p}\leq \|f\|_{p}^{p}.\]
So if $\varepsilon<\frac{\delta}{2|E|}$ the claim follows.
\end{proof}

	The following Lemma will allow us to get the lower bound we need and is similar to Lemma 7.8 in \cite{Voi}.

\begin{lemma}\label{L:ortho2} Let $H$ be a Hilbert space, and $\eta_{1},\cdots,\eta_{k}$ an orthonormal system in $H,$ and $V=\Span\{\eta_{j}:1\leq j\leq k\}$ and $P_{V}$ the projection onto $V.$   Let $K$ be a Hilbert space and $T\in B(H,K)$ with $\|T\|\leq 1.$ Then 
\[d_{\varepsilon}(\{T(\eta_{1}),\cdots,T(\eta_{k})\})\geq -k\varepsilon+\Tr(P_{V}T^{*}TP_{V}).\]
\end{lemma}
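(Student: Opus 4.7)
The plan is to mirror the proof of Lemma~\ref{L:ortho}. Given a subspace $W$ that $\varepsilon$-contains $\{P\eta_{1},\ldots,P\eta_{k}\}$, let $Q$ denote the orthogonal projection of $H$ onto $W$; the goal is to lower-bound $\dim W=\Tr(Q)$ in terms of $\Tr(P_{V}P)$ by a trace computation.

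First I would observe that we may assume $\dim W<\infty$ (otherwise the inequality is vacuous), so every operator composed with $P_{V}$ or $Q$ in what follows is trace class. Since $QP\eta_{j}$ is the nearest point of $W$ to $P\eta_{j}$, we get $\|(I-Q)P\eta_{j}\|<\varepsilon$ for each $j$, hence
\[
\sum_{j=1}^{k}\|(I-Q)P\eta_{j}\|^{2}<k\varepsilon^{2}.
\]

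The key step is then to recognize this sum as a trace. Using that $\{\eta_{j}\}$ is an orthonormal basis for the range of $P_{V}$, together with self-adjointness of $P$, $Q$, and $(I-Q)^{2}=I-Q$, cyclicity of the trace gives
\[
\sum_{j=1}^{k}\|(I-Q)P\eta_{j}\|^{2}=\sum_{j=1}^{k}\ip{P(I-Q)P\eta_{j},\eta_{j}}=\Tr\bigl(P_{V}P(I-Q)P\bigr)=\Tr(P_{V}P)-\Tr(QPP_{V}PQ).
\]
Since $PP_{V}P\leq I$, we have $QPP_{V}PQ\leq Q$, so $\Tr(QPP_{V}PQ)\leq\Tr(Q)=\dim W$. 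Combining yields $\dim W>\Tr(P_{V}P)-k\varepsilon^{2}$, and since $\varepsilon^{2}\leq\varepsilon$ whenever $\varepsilon\leq 1$ (the case $\varepsilon>1$ being trivial because $\Tr(P_{V}P)\leq k$), the claimed bound $\dim W\geq\Tr(P_{V}P)-k\varepsilon$ follows upon taking the infimum over $W$.

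There is no serious obstacle beyond the trace manipulation: the argument is a direct generalization of Lemma~\ref{L:ortho}, which is the special case $P=I$ (where $\Tr(P_{V})=k$). Replacing the identity by an arbitrary projection $P$, the natural quantity that emerges from the trace-cyclicity calculation is $\Tr(P_{V}P)$, exactly as needed; the only thing to verify is that $PP_{V}P\leq I$ so that the trace estimate $\Tr(QPP_{V}PQ)\leq\Tr(Q)$ goes through.
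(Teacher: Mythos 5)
Your argument is correct and is essentially the paper's proof rearranged: both reduce to the identity $\sum_{j}\ip{P_{W}P\eta_{j},P\eta_{j}}=\Tr(P_{V}PP_{W}PP_{V})$, the bound $\Tr(P_{W}PP_{V}PP_{W})\leq\Tr(P_{W})$, and the estimate $\|(1-P_{W})P\eta_{j}\|^{2}<\varepsilon^{2}\leq\varepsilon$. The paper runs the chain of inequalities starting from $\Tr(P_{W})\geq\Tr(P_{W}P)$ rather than from the sum of squared distances, but the content is identical.
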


\begin{proof}  For a subspace $E\subseteq H$ we let $P_{E}$ be the projection onto $E.$ Let $W$ be a subspace of minimal dimension which $\varepsilon$-contains $\{T(\eta_{1}),\cdots,T(\eta_{k})\}.$  Then

\[\Tr(P_{W}TT^{*})=\Tr(P_{W}TT^{*}P_{W})\leq \Tr(P_{W}),\]

similarly
\begin{align*}
\Tr(P_{W}TT^{*})&\geq\Tr(P_{V}T^{*}P_{W}TP_{V})\\
&=\sum_{j=1}^{k}\ip{P_{W}T(\eta_{j}),T(\eta_{j})}\\
&\geq-\varepsilon k+\sum_{j=1}^{k}\ip{T(\eta_{j}),T(\eta_{j})}\\
&= -\varepsilon k+\Tr(P_{V}T^{*}TP_{V}).
\end{align*}

\end{proof}

For convenience, we shall identify $L(\Gamma)$ as a set of vectors in $l^{2}(\Gamma).$ That is, we shall consider $L(\Gamma)$ to be all $\xi\in l^{2}(\Gamma)$ so that 
\[\|\xi\|_{L(\Gamma)}=\sup_{\substack{f\in c_{c}(\Gamma),\\ \|f\|_{2}\leq 1}}\|\xi*f\|_{2}<\infty.\]

	Here $\xi*f$ is the usual convolution product. By standard arguments, if $\xi\in L(\Gamma),$ then for all $f\in l^{2}(\Gamma),$ $\xi*f\in l^{2}(\Gamma)$ and 
\[\|\xi*f\|_{2}\leq \|\xi\|_{L(\Gamma)}\|f\|_{2}.\]

	By general theory, $L(\Gamma)$ is closed under convolution and 
\[(\xi*\eta)*\zeta=\xi*(\eta*\zeta)\]
for $\xi,\eta,\zeta\in L(\Gamma).$ Finally for $\xi\in L(\Gamma),$ we set 
\[\xi^{*}(x)=\overline{\xi(x^{-1})}.\]

	If $\xi\in L(\Gamma),\zeta,\eta\in l^{2}(\Gamma),$ then
\[\ip{\xi*\eta,\zeta}=\ip{\eta,\xi^{*}*\zeta}.\]

	Finally, for $\xi\in L(\Gamma),f\in c_{c}(\Gamma),$  
\[\|f*\xi\|_{2}=\|\xi^{*}*f^{*}\|_{2}\leq \|f^{*}\|_{2}\|\xi^{*}\|_{L(\Gamma)}=\|f\|_{2}\|\xi\|_{L(\Gamma)}.\]

	Hence every element of $L(\Gamma)$ is bounded as a right convolution operator.

	We shall need a few more lemmas, for the first we require the following definitions.

\begin{definition}\label{D:polys}\emph{We let $\CC^{*}\ip{X_{1},\cdots,X_{n}}$ be the free $*$-algebra in $n$ noncommuting variables. That is  $\CC^{*}\ip{X_{1},\cdots,X_{n}}$ is the universal $\CC$-algebra generated by elements $X_{1},\cdots,X_{n},X_{1}^{*},\cdots,X_{n}^{*},$ and we equip  $\CC^{*}\ip{X_{1},\cdots,X_{n}}$ with a $*$-algebra structure defined on words (and extended by conjugate linearity) by}
\[(Y_{1}\cdots Y_{l})^{*}=Y_{l}^{*}\cdots Y_{1}^{*},\mbox{$Y_{j}\in \{X_{1},\cdots,X_{n},X_{1}^{*},\cdots,X_{n}^{*}\}$},\]
\emph{here $(X_{j}^{*})^{*}=X_{j}.$ We call elements of $\CC^{*}\ip{X_{1},\cdots,X_{n}}$ $*$-polynomials in $n$ noncommuting variables. Note that if $A$ is a $*$-algebra, and $a_{1},\cdots,a_{n}\in A,$ then there is a unique $*$-homomorphism $\CC^{*}\ip{X_{1},\cdots,X_{n}}\to A$ sending $X_{j}$ to $a_{j}.$ For $P\in \CC^{*}\ip{X_{1},\cdots,X_{n}},$ we denote the image under this homomorphism by $P(a_{1},\cdots,a_{n}).$} \end{definition}

\begin{definition}\emph{A} tracial $*$-algebra \emph{is a pair $(A,\tau)$ where $A$ is a unital $*$-algebra, $\tau\colon A\to \CC$ is a linear map so that $\tau(1)=1,\tau(x^{*}x)\geq 0,$ with $\tau(x^{*}x)=0$ if and only if $x=0,$ and $\tau(xy)=\tau(yx)$ for all $x,y\in A,$  and for all $x\in A,$ there is a $M>0$ so that $\tau(y^{*}x^{*}xy)\leq M\tau(y^{*}y)$ for all $y\in A.$ An} embedding sequence of $(A,\tau)$ \emph{is a sequence of maps $\sigma_{i}\colon A\to M_{d_{i}}(\CC)$ such that}
\[\sup_{i}\|\sigma_{i}(x)\|_{\infty}<\infty,\mbox{ where $\|\cdot\|_{\infty} $ is the operator norm, for all $x\in A,$}\]
\[\sigma_{i}(1)=1,\]
\[\frac{1}{n}\Tr(\sigma_{i}(x))\to \tau(x),\]
\[\|\sigma_{i}(P(x_{1},\cdots,x_{n}))-P(\sigma_{i}(x_{1}),\cdots,\sigma_{i}(x_{n}))\|_{2}\to 0\]
\emph{for all $x_{1},\cdots,x_{n}\in A,$ and $*$-polynomials $P$ in $n$ noncomuting variables. Here $\|x\|_{2}=\tau(x^{*}x)^{1/2}$ for $x\in A.$ We let $L^{2}(A,\tau)$ be the completion of $A$ in $\|\cdot\|_{2}.$ We also let $\pi_{\tau}\colon A\to B(L^{2}(A,\tau))$ be given by $\pi_{\tau}(x)a=xa,$ for $x,a\in A.$} \end{definition}

	The main example which will be relevant for us is $A=c_{c}(\Gamma)$ with the product being convolution and the $*$-being defined by consider $c_{c}(\Gamma)\subseteq L(\Gamma),$ and $\tau(f)=f(e).$ Then an embedding sequence of $\Gamma$ extends to one of $c_{c}(\Gamma)$ by
\[\sigma_{i}(f)=f(e)\id+\sum_{g\in \Gamma\setminus\{e\}}f(g)\sigma_{i}(g).\]

	We note that for the next Lemma, and throughout the paper we will use measure theoretic notation for certain norms on tracial von Neumann algebras $(M,\tau)$ . Thus $\|x\|_{\infty}$ will be the operator norm of $x,$ and $\|x\|_{p}=\tau((x^{*}x)^{p/2})^{1/p}.$

\begin{lemma} Let $(A,\tau)$ be a tracial $*$-algebra. And let $M$ be the weak operator topology closure of $\pi_{\tau}(A)$ equipped with the trace $\tau(x)=\ip{x1,1}$ for all $x\in M.$ Then any embedding sequence of $A$ extends to one of $M.$
\end{lemma}

\begin{proof} By standard arguments, $\tau$ is indeed a trace, since $A$ is $\|\cdot\|$-dense in $L^{2}(A,\tau),$ and elements of $M$ commute with right multiplication it follows that $\tau(x^{*}x)=0$ for $x\in M$ if and only if $x=0.$ If $x\in M\setminus A,$ by the Kaplansky Density Theorem we may choose a sequence $a_{n,x}$ so that $\|\pi_{\tau}(a_{n,x})\|_{\infty}\leq \|x\|_{\infty}$ and 
\[\|a_{n,x}-x\|_{2}<2^{-n}.\]

	Note that if $\omega$ is a free ultrafilter on $\NN,$ then $\sigma$ gives a trace-preserving embedding of $A$ into 
\[N=\{(x_{i}):x_{i}\in M_{d_{i}}(\CC),\sup_{n}\|x_{i}\|_{\infty}<\infty\}/\left\{(x_{i}):\lim_{n\to \omega}\frac{1}{n}\Tr(x_{i}^{*}x_{i})=0\right\},\]
where $N$ has the trace
\[\tau_{\omega}(x)=\lim_{n\to \omega}\frac{1}{d_{i}}\Tr(x_{i}),\]
if $x=(x_{i}).$  Thus $\sigma\big|_{\{a\in A:\|a\|_{\infty}\leq 1\}}$ is strong operator topology-strong operator topology continuous, and hence has an extension
\[\tau\colon \{a\in M:\|a\|_{\infty}\leq 1\}\to N.\]
	If we define
\[\rho(a)=\tau\left(\frac{a}{\|a\|_{\infty}}\right)\|a\|_{\infty},\]
it follows that $\rho$ is a trace-preserving $*$-homomorphism $M\to N.$ 

So by a standard contradiction and ultrafilter argument, for all $a\in A,$ we may find $a_{i}\in M_{d_{i}}(\CC)$ so that $\|a_{i}\|_{\infty}\leq \|\pi_{\tau}(a)\|_{\infty}$ and $\|a_{i}-\sigma_{i}(a)\|_{2}\to 0.$ 

For $x\in M,$ choose integers $1\leq i_{1}<i_{2}<i_{3}<\cdots,$ and elements $b_{n,x,i}\in M_{d_{i}}(\CC)$ so that $\|b_{n,x,i}\|_{\infty}\leq \|x\|_{\infty}$ and 
\[\|b_{j,x,i}-\sigma_{i}(a_{j,x})\|_{2}<2^{-n}\mbox{ for $1\leq j\leq n,i\geq i_{n},$}\]
\[\|\sigma_{i}(a_{j,x})-\sigma_{i}(a_{k,x})\|_{2}<2^{-n}+\|a_{j,x}-a_{k,x}\|_{2}\mbox{ for $1\leq j,k\leq n,i\geq i_{n},$}\]
the last inequality being possibile since $\sigma_{i}$ is an embedding sequence on $A.$ 

	For $x\in M\setminus A,$ define $\sigma_{i}(x)=b_{n,x,i}$ where $n$ is such that $i_{n}\leq i<i_{n+1}.$ If $x\in M\setminus A,$ and $i\geq i_{n}$ and $N$ is such that $i_{N}\leq i\leq i_{N+1},$ then
\begin{align*}
\|\sigma_{i}(x)-\sigma_{i}(a_{n,x})\|_{2}&\leq 2^{-n}+\|\sigma_{i}(a_{N,x})-\sigma_{i}(a_{n,x})\|\\
&\leq 2\cdot 2^{-n}+\|a_{N,x}-a_{i,x}\|_{2}\\
&\leq 4\cdot 2^{-n},
\end{align*}
\[\|\sigma_{i}(x)\|_{\infty}\leq \|x\|_{\infty}.\]

	From this estimate it is not hard to see that $\sigma_{i}$ is an embedding sequence of $M.$

\end{proof}

\begin{lemma}\label{L:soficops} Let $\Gamma$ be a countable sofic group, and $\Sigma=(\sigma_{i}\colon \Gamma\to S_{d_{i}})$ a sofic approximation of $\Gamma.$ Extend $\sigma_{i}$ to a embedding sequence, still denoted $\sigma_{i},$ of $(L(\Gamma),\tau)$ with $\tau$ the group trace. For $r,s\in \NN$ define $\sigma_{i}\colon M_{h,s}(L(\Gamma))\to M_{h,s}(M_{d_{i}}(\CC))$ by $\sigma_{i}(A)=[\sigma_{i}(a_{lr})]_{1\leq l\leq h,1\leq r\leq s}.$  Fix $n\in \NN.$ For $1\leq j\leq d_{i},1\leq k\leq n$ and $E\subseteq \Gamma$ finite define $T_{j,k}^{(E)}\colon l^{p}(\Gamma)^{\oplus n}\to l^{p}(d_{i})$ by
\[T_{j,k}^{(E)}(f)=\sum_{g\in E}f_{k}(g)\sigma_{i}(g)e_{j}.\]

	Then

(a) For all $E$ and $(1-o(1))nd_{i}$ of the $j,k$ we have $\|T^{(E)}_{j,k}\|_{l^{p}\to l^{p}}\leq 1$ as $i\to \infty.$

(b) For $1\leq p\leq \infty,$ for  all $\varepsilon>0,$ for all $f\in c_{c}(\Gamma),g\in l^{p}(\Gamma)^{\oplus n},$ there is a finite subset $E\subseteq \Gamma,$ so that if $E'\supseteq E$ is a finite subset of $\Gamma,$ then the set of $(j,k)$ so that 
\[\|T^{(E')}_{j,k}(f*g)-\sigma_{i}(f)T^{(E)}_{j,k}(g)\|_{p}\leq \varepsilon\|g\|_{p},\]
	has cardinality at least  $(1-\varepsilon))nd_{i}$ for all large $i.$ 

(c) For all $\varepsilon>0,$ for all $\xi\in M_{1,n}(L(\Gamma)), $ (identifying $M_{1,n}(L(\Gamma))$ as a subset of $l^{2}(\Gamma)^{\oplus n}$)  there is a finite subset $E\subseteq \Gamma,$ so that if $E'\supseteq E$ is a finite subset of $\Gamma,$ then the set of $(j,k)$ so that 
\[\|T^{(E')}_{j,k}(\xi)-\sigma_{i}(\xi)(e_{j}\otimes e_{k})\|_{2}<\varepsilon,\]
(here $e_{j}\otimes e_{k}\in l^{2}(d_{i})^{\oplus n}$ is $e_{j}$ in the $k^{th}$ coordinate and zero otherwise).
	has cardinality at least $(1-\varepsilon)nd_{i}$ for all large $i.$

\end{lemma}

\begin{proof}

(a) We have
\[\left\|T^{(E)}_{j,k}(f)\right\|_{p}^{p}=\sum_{r=1}^{d_{i}}\left|\sum_{\substack{g\in E,\\ \sigma_{i}(g)(j)=r}}f_{k}(g)\right|^{p}.\]

	Let $C_{i}=\{j\in \{1,\cdots,d_{i}\}:\sigma_{i}(g)(j)\ne \sigma_{i}(h)(j)\mbox{ for $g\ne h$ in $E$}\}.$ By soficity, we have $\frac{|C_{i}|}{d_{i}}\to 1,$ and if $j\in C_{i}$ we have
\[\left\|T^{(E)}_{j,k}(f)\right\|_{p}^{p}\leq \|f_{k}\|_{p}^{p}\leq \|f\|_{p}^{p}.\]

(b) For $A\in M_{d_{i}}(\CC),$
\[\|A\|_{2}^{2}=\frac{1}{d_{i}}\sum_{j=1}^{d_{i}}\|Ae_{j}\|_{2}^{2},\]
where $e_{j}$ is the vector which has $j^{th}$ coordinate equal to $1,$ and all other coordinates zero. Hence by Chebyshev's inequality, the fact that $\|T^{(E)}_{j,k}\|_{p}\leq 1,$ and the definition of embedding sequences, it is enough to verify this for $f=\delta_{x},g=\delta_{y}$ for some $x,y\in\Gamma.$ But this is trivial from the definition of soficity.

(c) Let us first verfiy this when $\xi\in M_{1,n}(c_{c}(\Gamma)).$ In this case, we may again reduce to $\xi=(\delta_{a_{1}},\cdots,\delta_{a_{k}})$ for some $a_{1},\cdots,a_{k}\in \Gamma.$ Then if $E\supseteq \{a_{1},\cdots,a_{k}\}$ we have
\[T^{(E)}_{j,k}(\xi)=\sigma_{i}(a_{k})e_{j}=\sigma_{i}(\xi)(e_{j}\otimes e_{k}).\]

	In the general case let $\varepsilon>0,$ given $\xi\in M_{1,n}(L(\Gamma))$ choose $f\in M_{1,n}(c_{c}(\Gamma))$ so that $\|f-\xi\|_{2}<\varepsilon.$  Thus for $(1-(\varepsilon+o(1)))kd_{i}$ of the $(j,k)$ we have
\[\|T^{(E')}_{j,k}(\xi)-\sigma_{i}(\xi)(e_{j}\otimes e_{k})\|_{2}\leq 2\varepsilon+\|(\sigma_{i}(\xi)-\sigma_{i}(f))(e_{j}\otimes e_{k})\|.\]

	By the definition of embedding sequence for all large $i$ we have
\[\frac{1}{d_{i}}\sum_{j=1}^{d_{i}}\sum_{k=1}^{n}\|(\sigma_{i}(\xi)-\sigma_{i}(f))(e_{j}\otimes e_{k})\|_{2}^{2}<\varepsilon^{2},\]

	thus for at least $(1-\sqrt{\varepsilon})nd_{i}$ of the $(j,k)$ we have
\[\|(\sigma_{i}(\xi)-\sigma_{i}(f))(e_{j}\otimes e_{k})\|_{2}<\sqrt{\varepsilon},\]
combining these estimates completes the proof.

\end{proof}

	We need a similar lemma for $\R^{\omega}$-embeddable groups.

\begin{lemma}\label{L:op2} Let $\Gamma$ be a countable $\R^{\omega}$-embeddable group, and $\Sigma=(\sigma_{i}\colon \Gamma\to U(d_{i}))$ an embedding sequence. Define $\rho_{i}\colon \Gamma \to U(S^{2}(d_{i}))$ by $\rho_{i}(g)A=\sigma_{i}(g)A\sigma_{i}(g)^{-1}.$ Extend $\sigma_{i},\rho_{i}$ to  embedding sequences, still denoted $\sigma_{i},\rho_{i}$ of $(L(\Gamma),\tau)$ with $\tau$ the group trace. For $h,s\in \NN$ define $\sigma_{i}\colon M_{h,s}(L(\Gamma))\to M_{h,s}(M_{d_{i}}(\CC))$ by $\sigma_{i}(A)=[\sigma_{i}(a_{lr})]_{1\leq l\leq h,1\leq r\leq s}.$  Fix $n\in \NN.$ For $\xi,\eta\in l^{2}(d_{i}),1\leq k\leq d_{i}$ and $E\subseteq \Gamma$ finite define $T_{\xi,\eta,k}^{(E)}\colon l^{p}(\Gamma)^{\oplus n}\to S^{p}(d_{i})$ by
\[T_{\xi,\eta,k}^{(E)}(f)=\sum_{g\in E}f_{k}(g)\sigma_{i}(g)\xi \otimes \overline{\sigma_{i}(g)\eta}.\]

	Then

(a) There exists measurable $A_{i}\subseteq S^{2d_{i}-1}$ with $\mathbb P(A_{i})\to 1,$ so that 
\[\{(\xi,\eta)\in (S^{2d_{i}-1})^{2}:\|T_{\xi,\eta,k}^{(E)}\|_{l^{p}\to S^{p}}\leq 2\}\supseteq A_{i}\times A_{i},\]
for $(1-o(1))d_{i}$ of the $k.$

(b) For all $\varepsilon>0,$ for all $f\in c_{c}(\Gamma),g\in l^{p}(\Gamma)^{\oplus n},$ there exists measurable $B_{i}\subseteq S^{2d_{i}-1},$ with $\mathbb P(B_{i})\geq 1-\varepsilon,$ for all large $i,$ a finite subset $E\subseteq \Gamma,$ so that if $E'\supseteq E$ is a finite subset of $\Gamma,$ then for $(1-\varepsilon)d_{i}$ of the $k$ and for all large $i,$
\[\{(\xi,\eta)\in (S^{2d_{i}-1})^{2}:\|T^{(E')}_{\xi,\eta,k}(f*g)-\rho_{i}(f)T^{(E)}_{\xi,\eta,k}(g)\|_{p}<\varepsilon\}\supseteq B_{i}\times B_{i}\]

(c) For all $\varepsilon>0,$ for all $\zeta\in M_{1,n}(L(\Gamma)), $ (identifying $M_{1,n}(L(\Gamma))$ as a subset of $l^{2}(\Gamma)^{\oplus n}$  ) there are measurable $C_{i}\subseteq S^{2d_{i}-1},$ with $\PP(C_{i})\geq 1-\varepsilon$ for all large $i,$ a finite subset $E\subseteq \Gamma,$ so that if $E'\supseteq E$ is a finite subset of $\Gamma,$ so that for at least $(1-\varepsilon)d_{i}$ of the $k$ and for all large $i,$
\[\{(\xi,\eta)\in (S^{2d_{i}-1})^{2}:\|T^{(E')}_{\xi,\eta,k}(\zeta)-\rho_{i}(\zeta)\xi\otimes \overline{\eta})\|_{2}<\varepsilon\}\supseteq C_{i}\times C_{i},\]
	has cardinality at least $(1-\varepsilon)nd_{i}$ for all large $i.$

\end{lemma}

\begin{proof} Same as the preceding Lemma, but using Lemma \ref{L:choose}.

\end{proof}

	Finally we need one last lemma, which allows us to reduce to considering subspaces of finite direct sums of $l^{p}(\Gamma).$

\begin{lemma}\label{L:vnDcont} Let $\Gamma$ be a countable discrete group. Let $H\subseteq l^{2}(\NN,l^{2}(\Gamma))$ be a closed $\Gamma$-invariant subspace.

(a) Define $\pi_{k}\colon l^{2}(\NN,l^{2}(\Gamma))\to l^{2}(\Gamma)^{\oplus k}$  by $\pi_{k}f(j)=f(j)$ for $1\leq j\leq k.$ Then
\[\dim_{L(\Gamma)}(H)=\sup_{k}\dim_{L(\Gamma)}(\overline{\pi_{k}(H)}^{\|\cdot\|_{2}}).\]

(b) The representation $H$ is isomorphic to a direct sum of representations of the form $l^{2}(\Gamma)p$ with $p\in L(\Gamma)$ (by the remarks preceding definition \ref{D:polys} each element of $L(\Gamma)$ is a bounded right convolution operator) an orthogonal projection.
\end{lemma}

\begin{proof}

(a)	Since $\pi_{k}(H)$ is dense in $\overline{\pi_{k}(H)}$ we have
\[\dim_{L(\Gamma)}(H)\geq\sup_{k}\dim_{L(\Gamma)}(\overline{\pi_{k}(H)}^{\|\cdot\|_{2}}).\]

	Let us first handle the case when $\dim_{L(\Gamma)}(H)<\infty,$ let $P$ be the projection onto $H.$ 

	Then
\begin{align*}
\dim_{L(\Gamma)}(\overline{\pi_{k}(H)})&=\dim_{L(\Gamma}(\ker(\pi_{k}P)^{\perp})\\
&=\dim_{L(\Gamma)}(H\cap\overline{(H^{\perp}+l^{2}(\Gamma)^{\oplus k}}))\\
&=\dim_{L(\Gamma)}(H\cap (H\cap l^{2}(\NN\setminus \{1,\cdots,k\},\Gamma))^{\perp})).
\end{align*}

	Let $Q_{k}$ be the projection onto $H\cap l^{2}(\NN\setminus \{1,\cdots,k\},\Gamma).$ Then
\begin{align*}
\dim_{L(\Gamma)}(H\cap l^{2}(\NN\setminus \{1,\cdots,k\},\Gamma))&= \sum_{n=1}^{\infty}\ip{Q_{k}(\delta_{e}\otimes e_{n}),\delta_{e}\otimes e_{n}}\\
&=\sum_{n=k}^{\infty}\ip{Q_{k}(\delta_{e}\otimes e_{n}),\delta_{e}\otimes e_{n}}\\
&\leq \sum_{n=k}^{\infty}\ip{P(\delta_{e}\otimes e_{n}),\delta_{e}\otimes e_{n}}\\
&\to 0,
\end{align*}
as $\dim_{L(\Gamma)}(H)<\infty.$

	In the general case, it suffices to show that we may write $H$ as a direct sum of representations with finite von Neumann dimension. Zorn's Lemma implies that every representation is a direct sum of cyclic representations which are contained in $l^{2}(\NN,l^{2}(\Gamma))$, so it suffices to show every cyclic representation contained in $l^{2}(\NN,l^{2}(\Gamma))$ has finite von Neumann dimension.

	For this, let $\xi\in H$ be a cyclic vector, then by Theorem V.3.15 in \cite{Taka} there is vector $\zeta\in l^{2}(\Gamma)$ so that 
\[\ip{g\xi,\xi}=\ip{g\zeta,\zeta}\]
for all $g\in \Gamma.$ Thus $H$ is isomorphic to $\overline{\Span}^{\|\cdot\|_{2}}(\Gamma\xi)$ via the unitary sending $g\xi\to g\zeta.$ From this it clear that $H$ has dimension at most $1.$ 

(b) As in part $(a),$ we may assume that $H$ is a cyclic representation contained in $l^{2}(\Gamma).$ Let $p$ be the projection onto $H,$ then $p$ commutes with $L(\Gamma).$ Set $\xi=p(\delta_{e}),$ since $p$ commutes with $L(\Gamma),$ it is not hard to see that $p(f)=f*\xi$ for $f\in c_{c}(\Gamma).$ Arguments entirely similar to those before Definition \ref{D:polys} prove that $\xi$ is a bouned left convolution operator. Hence $\xi$ is an orthogonal projection in $L(\Gamma),$ and $H=l^{2}(\Gamma)\xi.$  

\end{proof}

\begin{theorem}\label{T:lowerbound} Let $\Gamma$ be a countable discrete group, let $1\leq p\leq 2,$ and $Y$ a closed $\Gamma$-invariant subspace of $l^{p}(\NN,l^{p}(\Gamma)),$ with $\Gamma$ acting by $gf(x)=f(g^{-1}x).$  Set $H=\overline{Y}^{\|\cdot\|_{2}}.$

(a) Suppose $\Sigma$ is a sofic approximation of $\Gamma,$ then
\[\underline{\dim}_{\Sigma,l^{p}}(Y,\Gamma)\geq \dim_{L(\Gamma)}(H).\]

(b)Suppose $\Sigma$ is an embedding sequence of $\Gamma,$ then
\[\underline{\dim}_{\Sigma,S^{p},\mbox{conj}}(Y,\Gamma)\geq \dim_{L(\Gamma)}(H).\]

(c) Suppose $\Sigma$ is an embedding sequence of $\Gamma,$ and $H\subseteq l^{2}(\NN,l^{2}(\Gamma))$ is $\Gamma$ invariant, then
\[\underline{\dim}_{\Sigma,l^{2}}(H,\Gamma)\geq \dim_{L(\Gamma)}(H).\]

\end{theorem}

\begin{proof} We first reduce to the case that $Y\subseteq l^{p}(\Gamma)^{\oplus h}$ with $h$ finite.

Consider the projection
\[\pi_{h}\colon l^{p}(\NN,\Gamma)\to l^{p}(\{1,\cdots,h\},l^{p}(\Gamma))\] given by
\[\pi_{h}f(j)=f(j),\]
assume we know the result for $Y\subseteq l^{p}(\Gamma)^{\oplus h}$ for each $h.$ 

	Then,
\begin{align*}
\dim_{\Sigma,l^{p}}(Y,\Gamma)&\geq \dim_{\Sigma,l^{p}}(\overline{\pi_{h}(Y)}^{\||\cdot\|_{p}},\Gamma)\\
&\geq \dim_{L(\Gamma)}(\overline{\pi_{h}(H)}^{\|\cdot\|_{2}}),
\end{align*}
letting $h\to \infty$ and applying the preceding Lemma proves the claim. Thus, we shall assume that $Y\subseteq l^{p}(\Gamma)^{\oplus n}$ with $n\in \NN.$

	By part (b) of the preceding Lemma, we can find vectors $(\xi^{(q)})_{q=1}^{\infty}\in H,$ so that 
\[\ip{\lambda(g)\xi^{(s)},\xi^{(s)}}=\ip{\lambda(g) q_{s},q_{s}}=q_{s}(g^{-1}),\mbox{ where $q_{s}$ is a projection in $L(\Gamma)$},\]
\[\sum_{s=1}^{\infty}\tau(q_{s})=\dim_{L(\Gamma)}(H),\]
\[\ip{\lambda(g)\xi^{(j)},\xi^{(l)}}=0\mbox{ for $j\ne l,g\in \Gamma.$}\]
\[H=\bigoplus_{j=1}^{\infty}\overline{L(\Gamma)\xi^{(j)}}.\]

	These equations can be rewritten as 
\[\sum_{i=1}^{n}\xi^{(j)}*(\xi^{(j)})^{*}=q_{j},\mbox{ for $1\leq j\leq \infty$}\]
\[\sum_{i=1}^{n}\xi^{(j)}*(\xi^{(l)})^{*}=0\mbox{ if $j\ne l$},\]

	Let us illuminate these equations a little. Regard a vector $\xi\in l^{2}(\Gamma)^{\oplus n}$ as a element in $M_{1,n}(l^{2}(\Gamma))$ with the product of two matrices induced from convolution of vectors. Then the product of elements of $M_{1,n}(l^{2}(\Gamma)),M_{n,1}(L(\Gamma))$ makes sense, but may not land back in $l^{2}(\Gamma).$ The above equations then read
\[\xi^{(j)}(\xi^{(j)})^{*}=q_{j}\mbox{for $1\leq j<\infty$},\]
\[\xi^{(j)}(\xi^{(l)})=0\mbox{ for $j\ne l.$}.\]

	In particular, the above equations imply that 
\[\|\xi^{(j)}_{r}\|_{L(\Gamma)}\leq 1.\]
 So that $\xi^{(j)}\in M_{1,n}(L(\Gamma)).$ Extend $\sigma_{i}$ to a embedding sequence of $M_{n,m}(L(\Gamma))$ for all $n,m$  and such that 
\[\|\sigma_{i}(\xi^{(j)})\|\leq 1,\mbox{ for all $j$}\]
\[\|\sigma_{i}(\xi^{(j)}_{r})\|\leq 1,\mbox{ for all $j,r$}\]
\[\sigma_{i}(\xi^{(j)})\sigma_{i}(\xi^{(l)})^{*}=0\mbox{ for all $j\ne l.$}\]
for all $j,r.$

(a) 

	Let $S=(x_{j})_{j=1}^{n}$ be a dynamical generating sequence for $Y.$ 

 Fix $\eta>0,t\in \NN$ and choose a finite subset $F_{1}\subseteq \Gamma,m_{1}\in \NN,$ and $c_{gj}^{(s)}$ for $1\leq s\leq t,(g,j)\in F_{1}\times \{1,\cdots,m_{1}\}$ so that for all $1\leq s\leq t$
\[\left\|\xi^{(s)}-\sum_{\substack{g\in F_{1}\\ 1\leq j\leq m_{1}}}c_{gj}^{(s)}gx_{j}\right\|_{2}<\eta.\]

	Choose finitely supported functions $x_{j}'$ so that $\|x_{j}-x_{j}'\|_{p}<\eta'$ . Since $p\leq 2,$ it is easy to see that if we  force $\eta'$ to be sufficiently small then,
\[\left\|\xi^{(s)}-\sum_{\substack{g\in F_{1}\\ 1\leq j\leq m_{1}}}c_{gj}^{(s)}gx_{j}'\right\|_{2}<\eta.\]

Let $S=(x_{j})_{j=1}^{\infty}$ be a dynamically generating sequence for $Y.$ Fix $F\subseteq \Gamma$ finite $m\in \NN,\delta>0.$ Let $E\subseteq \Gamma$ be finite, let $T_{j,k}^{(E)}$ be defined as  Lemma \ref{L:soficops}.

	It is easy to see that if $E$ is sufficently large, then $T_{j,k}^{(E)}\big|_{Y_{F,m}}\in \Hom_{\Gamma}(S,F,m,\delta,\sigma_{i})_{2}$ for $(1-o(1))nd_{i}$ of the $j,k,$ and  in fact $\|T^{(E)}_{j,k}\|_{l^{p}\to l^{p}}\leq 2$ for $1\leq p\leq 2.$  For such $(j,k),$ and for all small $\delta,$ for $1\leq s\leq t+1$

\[\left\|T_{j,k}^{(E)}(\xi^{(s)})-\sum_{\substack{g\in F_{1}\\ 1\leq j\leq m_{1}}}c_{gj}^{(p)}\sigma_{i}(g)T_{j,k}^{(E)}(x_{j})\right\|_{2}<2\eta,\]
\[\|T_{j,k}^{(E)}(gx_{j}')-T_{j,k}^{(E)}(gx_{j})\|_{2}<\eta.\]

	Thus by Lemma \ref{L:soficops} for at least $(1-(2013)!\varepsilon)nd_{i}$ of the $j,k$ we have
\[\left\|\sigma_{i}(\xi^{(s)})(e_{j}\otimes e_{k})-\sum_{\substack{g\in F_{1}\\ 1\leq j\leq m_{1}}}c_{gj}^{(p)}\sigma_{i}(g)T_{j,k}^{(E)}(x_{j})\right\|_{2}<\varepsilon+\eta.\]

	Now consider the linear map $A\colon l^{\infty}(\NN,l^{p}(d_{i}))\to l^{2}(d_{i})^{\oplus t}$ given by
\[S(f)=\left(\sum_{\substack{g\in F_{1}\\ 1\leq j\leq m_{1}}}c_{gj}^{(p)}\sigma_{i}(g)f(j)\right)_{p=1}^{t},\]
from the above it is easy to see that if $\alpha_{S}(\Hom_{\Gamma}(S,F,m,\delta,\sigma_{i}))\subseteq_{\varepsilon'}V$ and $\varepsilon'$ is sufficiently small, 
\[A(V)\supseteq_{\varepsilon,\|\cdot\|_{2}}\{\phi_{i}(e_{j}\otimes e_{k}):(j,k)\in A_{i}\},\]
with 
\[\frac{|A_{i}|}{d_{i}}\to (1-(2013)!\varepsilon)nd_{i},\]
\[\phi_{i}(f)=(\sigma_{i}(\xi^{(1)})(f),\sigma_{i}(\xi^{(2)})(f),\cdots,\sigma_{i}(\xi^{(t)})(f)).\]

	Thus $\phi_{i}$ is given in matrix form by
\[\phi_{i}=\begin{bmatrix}
\sigma_{i}(\xi^{(1)})&0&\cdots&0\\
0&\sigma_{i}(\xi^{(2)})&\cdots&0\\
\vdots&\ddots&\cdots&\vdots\\
0&0&\cdots&\sigma_{i}(\xi^{(t)})
\end{bmatrix}.\]

	As
\[\phi_{i}\phi_{i}^{*}=\begin{bmatrix}
\sigma_{i}(\xi^{(1)})\sigma_{i}(\xi^{(1)})^{*}&0&\cdots&0\\
0&\sigma_{i}(\xi^{(2)})\sigma_{i}(\xi^{(2)})^{*}&\cdots&0\\
\vdots&\ddots&\cdots&\vdots\\
0&0&\cdots&\sigma_{i}(\xi^{(t)})\sigma_{i}(\xi^{(t)})^{*}
\end{bmatrix}\]

	By our choice of $\sigma_{i}$ we have
\[\|\phi_{i}\|\leq 1,\]

	By Lemma \ref{L:ortho2}, we find that 
\[\dim_{\Sigma,l^{p}}(V,\Gamma)\geq (1-(2013)!\varepsilon)n+\dim_{L(\Gamma)}H_{t}.\]

	Letting $\varepsilon\to 0,t\to \infty$ completes the proof.

(b), (c) Same proof as in $(a),$ one instead uses Lemma \ref{L:op2}, Lemma $\ref{L:choose},$ and the formula
\[\mathbb P(A)=\int_{U(d_{i})}\frac{|\{j:Ue_{j}\in A\}||{d_{i}}}\,dU,\]
for $A\subseteq S^{2d_{i}-1},$ to find an orthonormal system $\zeta_{1},\cdots,\zeta_{q}$ with $q\geq (1-\varepsilon)d_{i},$ so that $T_{\zeta_{j},\zeta_{p},k}^{(E)}\in \Hom_{\Gamma}(\cdots)$ for most $k$ and all $j,p.$
\end{proof}

\begin{cor}\label{T:lp} Let $1\leq p\leq 2,$  $V$ a finite-dimensional normed vector space,  and $\Gamma$ a countable discrete group. 

(a) If $\Gamma$ is sofic and $\Sigma$ is a sofic approximation of $\Gamma$, then
\[\underline{\dim}_{\Sigma,l^{p}}(l^{p}(\Gamma,V),\Gamma)=\dim_{\Sigma,l^{p}}(l^{p}(\Gamma,V),\Gamma)=\dim V.\]

(b) If $\Gamma$ is $\R^{\omega}$-embeddable and $\Sigma$ is an  embedding sequence of $\Gamma,$  then 

\[\underline{\dim}_{\Sigma,l^{2}}(l^{2}(\Gamma,l^{2}(n)),\Gamma)=\dim_{\Sigma,l^{2}}(l^{2}(\Gamma,l^{2}(n)),\Gamma)=n.\]
\[\underline{\dim}_{\Sigma,S^{p},conj}(l^{p}(\Gamma,V),\Gamma)=\dim_{\Sigma,S^{p},conj}(l^{p}(\Gamma,V),\Gamma)=\dim V.\]
\end{cor}

\begin{proof} The lower bounds are automatic from the preceding Theorem. The upper bounds are easy since $l^{p}(\Gamma,V)$ can be generated by $\dim V$ elements.

\end{proof}

\begin{cor} Let $\Gamma$ be a $\R^{\omega}$-embeddable group $1\leq p\leq 2.$ If $V,W$ are finite dimensional vector spaces with $\dim V<\dim W,$  then there are no $\Gamma$-equivariant bounded linear maps from $l^{p}(\Gamma,V)$ to $l^{p}(\Gamma,W)$ with dense image.  Consequently if $2\leq p<\infty,$ then there are no $\Gamma$-equivariant bounded linear injections from $l^{p}(\Gamma,W)$ to $l^{p}(\Gamma,V)$. \end{cor}
\begin{proof} For $1\leq p\leq 2$ this is immediate from the above corollary and Proposition \ref{P:surjection}. The other result follow by duality.
\end{proof}

\begin{theorem}\label{T:dim1} Let $\Gamma$ be a $\R^{\omega}$-embeddable group, and $\pi\colon \Gamma\to U(H)$ a representation, such that $\pi\leq \lambda^{\oplus \infty}.$ Then for every embedding sequence $\Sigma,$ 
\[\dim_{\Sigma,l^{2}}(\pi)=\underline{\dim}_{\Sigma,l^{2}}(\pi)=\dim_{L(\Gamma)}(\pi).\] 

\end{theorem}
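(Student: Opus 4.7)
The plan is to reduce the general case to the cyclic case handled by Proposition \ref{P:cyclicdim} via the cyclic decomposition of $\pi$, then to extract the upper bound from the already-proved lower bound by complementing inside $\lambda$. Since $\pi \leq \lambda^{\oplus \infty}$ acts on a separable Hilbert space, I would decompose
\[\pi=\bigoplus_{k=1}^{\infty}\pi_{k}\]
as an orthogonal direct sum of cyclic subrepresentations. The GNS/Radon--Nikodym argument given in the opening paragraph of the proof of Proposition \ref{P:cyclicdim} shows that each cyclic $\pi_{k}\leq \lambda^{\oplus \infty}$ is isomorphic to a subrepresentation of $\lambda$. Each $\pi_{k}$ has a single (hence finite) dynamically generating sequence, so Corollary \ref{C:dinfsumadd} applies, and countable additivity of Murray--von Neumann dimension on orthogonal direct sums inside $\lambda^{\oplus \infty}$ gives $\dim_{L(\Gamma)}(\pi)=\sum_{k}\dim_{L(\Gamma)}(\pi_{k})$.

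For the lower bound, Proposition \ref{P:cyclicdim} yields $\underline{\dim}_{\Sigma,l^{2}}(\pi_{k})\geq \dim_{L(\Gamma)}(\pi_{k})$ for every $k$, and Corollary \ref{C:dinfsumadd} gives
\[\underline{\dim}_{\Sigma,l^{2}}(\pi)\geq \sum_{k}\underline{\dim}_{\Sigma,l^{2}}(\pi_{k})\geq \sum_{k}\dim_{L(\Gamma)}(\pi_{k})=\dim_{L(\Gamma)}(\pi).\]
In particular, $\underline{\dim}_{\Sigma,l^{2}}(\rho)\geq \dim_{L(\Gamma)}(\rho)$ now holds for \emph{every} subrepresentation $\rho\leq \lambda^{\oplus \infty}$, a fact I will reuse in the upper bound.

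For the upper bound, I would first treat the cyclic case: let $\pi'\leq \lambda$ be cyclic and consider $l^{2}(\Gamma)=\pi'\oplus (\pi')^{\perp}$. Theorem \ref{T:sumsubadd} applies to the embedding sequence $\sigma_{i}\colon \Gamma\to U(d_{i})$ (its hypotheses are trivially met since each $V_{i}=l^{2}(d_{i})$ is a Hilbert space, so the volume ratio of the dual unit ball is $1$), yielding
\[1=\dim_{\Sigma,l^{2}}(l^{2}(\Gamma))\geq \dim_{\Sigma,l^{2}}(\pi')+\underline{\dim}_{\Sigma,l^{2}}((\pi')^{\perp}),\]
where the equality on the left is Theorem \ref{T:lp}. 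The lower bound established above applied to $(\pi')^{\perp}\leq \lambda$ gives $\underline{\dim}_{\Sigma,l^{2}}((\pi')^{\perp})\geq \dim_{L(\Gamma)}((\pi')^{\perp})=1-\dim_{L(\Gamma)}(\pi')$, and hence $\dim_{\Sigma,l^{2}}(\pi')\leq \dim_{L(\Gamma)}(\pi')$. For a general $\pi=\bigoplus_{k}\pi_{k}$, Corollary \ref{C:dinfsumadd} combined with the cyclic upper bound gives
\[\dim_{\Sigma,l^{2}}(\pi)\leq \sum_{k}\dim_{\Sigma,l^{2}}(\pi_{k})\leq \sum_{k}\dim_{L(\Gamma)}(\pi_{k})=\dim_{L(\Gamma)}(\pi),\]
finishing the proof.

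The operator-algebraic content lies entirely in Proposition \ref{P:cyclicdim}; the main step in the present argument is conceptual rather than technical, namely recognizing that the upper bound for a cyclic subrepresentation of $\lambda$ can be extracted from the lower bound for its orthogonal complement via super-additivity inside $l^{2}(\Gamma)$. The principal bookkeeping obstacle is to sequence the two directions correctly so that the argument is non-circular: one must first establish the lower bound for all subrepresentations of $\lambda^{\oplus \infty}$, then bootstrap to the cyclic upper bound, then assemble the general upper bound via Corollary \ref{C:dinfsumadd}.
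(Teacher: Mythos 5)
Your proposal is correct and follows essentially the same route as the paper: Proposition \ref{P:cyclicdim} for the lower bound on cyclic pieces, superadditivity (Theorem \ref{T:sumsubadd}) applied to $\lambda=\pi'\oplus(\pi')^{\perp}$ to convert the lower bound on the complement into the upper bound on $\pi'$, and Corollary \ref{C:dinfsumadd} to assemble the general case from the cyclic decomposition. The only cosmetic difference is ordering: the paper handles the cyclic case in one chain of inequalities (using that any subrepresentation of $\lambda$ is itself cyclic, so Proposition \ref{P:cyclicdim} applies directly to $(\pi')^{\perp}$), whereas you first globalize the lower bound and then return for the upper bound; both sequencings are valid and non-circular.
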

\begin{proof} Let $\lambda\colon \Gamma\to \mathcal{U}(l^{2}(\Gamma))$ be given by $\lambda(g)f(x)=f(g^{-1}x).$ We already know from  Theorem $\ref{T:lp}$ that
\[\dim_{\Sigma,l^{2}}\lambda^{\oplus n}=\underline{\dim}_{\Sigma,l^{2}}\lambda^{\oplus n}=n.\]

Let us first assume that $\pi$ is cyclic with cyclic vector $\xi,$ then as in Lemma \ref{L:vnDcont} we may find a $\zeta\in l^{2}(\Gamma)$ so that
\[\ip{\pi(x)\xi,\xi}=\ip{\lambda(x)\zeta,\zeta},\]
so $\pi \leq \lambda.$  Let $\pi'$ be a representation such that $\lambda=\pi\oplus \pi',$ then by  Theorem \ref{T:lowerbound} we have
\begin{align*}
1=\dim_{\Sigma,l^{2}}\lambda &\geq \dim_{\Sigma,l^{2}}\pi+\underline{\dim}_{\Sigma,l^{2}}\pi'\\
&\geq\underline{\dim}_{\Sigma,l^{2}}\pi+\underline{\dim}_{\Sigma,l^{2}}\pi'\\
&\geq \dim_{L(\Gamma)}\pi+\dim_{L(\Gamma)}\pi'\\
&=1.
\end{align*}
Thus all the above inequalities must be equalities, in particular
\[\dim_{\Sigma,l^{2}}\pi=\underline{\dim}_{\Sigma,l^{2}}\pi=\dim_{L(\Gamma)}\pi.\]

	In the general case, apply Zorn's Lemma to write $\pi=\bigoplus_{n=1}^{\infty}\pi_{n}$ with $\pi_{n}$ cyclic. Then by Corollary $\ref{C:dinfsumadd}$
\[\underline{\dim}_{\Sigma,l^{2}}(\pi)\geq \sum_{n=1}^{\infty}\underline{\dim}_{\Sigma,l^{2}}(\pi_{n})=\sum_{n=1}^{\infty}\dim_{L(\Gamma)}\pi_{n}=\dim_{L(\Gamma)}\pi,\]
\[\dim_{\Sigma,l^{2}}(\pi)\leq \sum_{n=1}^{\infty}\dim_{\Sigma,l^{2}}(\pi_{n})=\sum_{n=1}^{\infty}\dim_{L(\Gamma)}\pi_{n}=\dim_{L(\Gamma)}\pi.\]
This completes the proof of the theorem.

\end{proof}

$\mathbf{Acknowledgments}.$ The author would like to thank Dimitri Shlyakhtenko and Yoann Dabrowski  for alerting him to this problem, and Dimitri Shlyakhtenko for his helpful advice on the problem.  The author would also like to thank Hanfeng Li for pointing out errors in a previous version of this paper.

\end{document}